\DeclareMathAlphabet{\mathpzc}{OT1}{pzc}{m}{it}
\definecolor{darkgreen}{RGB}{0,95,0}
\numberwithin{equation}{section}
\newtheorem{theorem}{Theorem}[section]
\newtheorem{definition}[theorem]{Definition}
\newtheorem{lemma}[theorem]{Lemma}
\newtheorem{proposition}[theorem]{Proposition}
\newtheorem{remark}[theorem]{Remark}
\newtheorem{example}{Example}
\renewcommand{\div}{\mathrm{div}}
\newcommand{\supp}{\mathrm{supp}}
\newcommand{\dx}{\,\textup{d}x}
\newcommand{\da}{\,\textup{d}\alpha}
\newcommand{\BV}{\textup{BV}}
\def\RR{{\mathbb{R}}}
\def\RRR{\RR \cup {\{\infty\}}}
\def\Om{\Omega}
\def\al{\alpha}
\def\ga{\gamma}
\def\Ga{\Gamma}
\newcommand{\V}{V}
\newcommand{\Va}{{\V_{\Gamma_{N}}(\Omega)}}
\thanks{
This work is partially supported by NSF grants DMS-1818772, DMS-2012391, DMS-1913004, the Air Force Office of Scientific Research (AFOSR) under Award NO: FA9550-19-1-0036, and Department of Navy, Naval Postgraduate School
under Award NO: N00244-20-1-0005.
}
\begin{document}

	\title[Variational Problems with Distributional and Weak Gradient Constraints]{{Non-diffusive Variational Problems \\ with Distributional and Weak Gradient Constraints}
	}
	
	\author{Harbir Antil, Rafael Arndt, Carlos N. Rautenberg, Deepanshu Verma}
	\address{H. Antil, R. Arndt, C.N. Rautenberg, D. Verma. Department of Mathematical Sciences and the Center for Mathematics and Artificial Intelligence (CMAI), George Mason University, Fairfax, VA 22030, USA.}
	\email{hantil@gmu.edu, tarndt@gmu.edu, crautenb@gmu.edu, dverma2@gmu.edu}
	
	\begin{abstract}
		In this paper, we consider non-diffusive variational problems with mixed boundary conditions and (distributional and weak) gradient constraints. The upper bound in the constraint is either a function or a Borel measure, leading to the state space being a Sobolev one or the space of functions of bounded variation. We address existence and uniqueness of the model under low regularity assumptions, and rigorously identify its Fenchel pre-dual problem. The latter in some cases  {is posed on a} non-standard space of Borel measures with square integrable divergences.  {We also establish existence and uniqueness of solutions to this pre-dual problem under some assumptions. We conclude the paper by introducing a mixed finite-element method to solve the primal-dual system. The numerical examples confirm our theoretical findings.}		 
	\end{abstract}
	
	\keywords{non-diffusive variational problems, distributional derivatives, weak derivatives, gradient constraint, Fenchel dual, Borel measure, mixed finite-element method.}
		
	\maketitle

	\tableofcontents
	
	\section{Introduction} \label{Sec:1}

	We begin by considering an evolutionary problem whose semi-discretization (in time) gives rise to the class of stationary problems of interest in this paper. {Suppose that} $f:(0,T)\times \Omega \to \mathbb{R}$ together with $u_0:\Omega\to \mathbb{R}$ {are} given, where $\Omega\subset \mathbb{R}^\mathrm{d}$ is {a bounded domain  with a Lipschitz boundary}. Furthermore, let $\alpha$ be a given nonnegative function (possibly only  integrable), or a nonnegative Borel measure in $\Omega$. Suppose that $u:(0,T)\times\Omega\to\mathbb{R} $, such that $u(0)=u_0$, is a solution to the following problem
	 \begin{equation}\label{eq:Evo_u_1st}
\text{Find }u\in \mathcal{K} \text{ such that } \:\int_0^T\left(\partial _tu(t) - f(t), v(t)-u(t)\right)_{{L^2(\Omega)}}\,\textup{d}t \geq 0,\: \text{ for all } v\in \mathcal{K},
\end{equation}
where the set $\mathcal{K}$ is given by
\begin{equation}\label{eq:K}
	\mathcal{K} :=\tilde{U}(0,T)\cap \{w\::\: w(t)\in K \text{ almost everywhere} \}.
\end{equation}
The choice of $\tilde{U}(0,T)$ and $K$  in \eqref{eq:K} hinges on the type  of the boundary conditions and the regularity of  $\alpha$. We assume that the boundary $\partial\Omega$ is partitioned
	into a Dirichlet boundary part $\Gamma_D$ and a non-Dirichlet boundary part $\Gamma_N$, both composed of a finite number of connected parts, such that 
	\begin{align*}
	{\overline{\Gamma}_D\cup\overline{\Gamma}_N} = \partial\Omega,\qquad\text{and}\qquad
	\Gamma_D\cap\Gamma_N = \emptyset.
	\end{align*}
Notice that on $\Gamma_N$, we do not necessarily prescribe Neumann boundary conditions, as it is later clarified. 
However, 
a conservation law of material is in place in the case $\Gamma_D=\emptyset$; specifically, it can be inferred  from \eqref{eq:Evo_u_1st}  that $\int_{\Omega}(u(T)-u_0)\dif x =\int_0^T\int_{\Omega}f\dif x \dif t$ given that $v=u \pm 1$ are admissible test functions  as we see next.
The restriction of $u$ to the $\Gamma_D$ part of the boundary is assumed to be zero, and no restrictions are assumed on $\Gamma_N$. 

The set $K$ is convex and it arises by a nonlinear law {with a bound on the} first order derivative terms. In the most general form $K$ is given by 
\begin{equation}\label{eq:SetK}
	K= \{v\in U_{\Gamma_D}(\Omega) : |G v |_{p}\leq\alpha\},
\end{equation}
with $1\leq p\leq +\infty$.  {We briefly discuss the two possible scenarios that we consider: }
\begin{enumerate}[\upshape(I)]
  \item\label{itm:1ini} If $\alpha$ is a {\bf nonnegative measurable function}, then $U_{\Gamma_D}(\Omega)$ is a Sobolev-type space and $G=\nabla$ is the weak  gradient, so that $|\nabla v|_{{p}}$ is the $\ell_{p}$-norm of the weak gradient {of $v$}. Hence, $|\nabla v|_{{p}}\leq\alpha$  in \eqref{eq:SetK}  is considered in the almost everywhere (a.e.) in $\Omega$ sense.
    \item\label{itm:2ini} If $\alpha$ is a {\bf nonnegative Borel measure}, then $U_{\Gamma_D}(\Omega)$ is a subset of functions of bounded variation  $\BV(\Omega)$. In this case, $G=\mathrm{D}$ is the distributional gradient, and $|\mathrm{D} v|_{{p}}$ the total variation measure of $\mathrm{D}v$ associated to the $\ell_{p}$-norm, and the  constraint  $|\mathrm{D} v|_{p}\leq\alpha$ is understood in the measure sense.
  \end{enumerate}
  Both instances, \eqref{itm:1ini} and \eqref{itm:2ini} are related, in fact \eqref{itm:1ini} may be considered  as a  special case of \eqref{itm:2ini}. Furthermore,  {letting $\alpha\in \mathrm{M}^+(\Omega)$ in case \eqref{itm:2ini}, where $\mathrm{M}^+(\Omega)$ denotes the set of nonnegative Borel measures, enables us} to handle the delicate case $\alpha\in L^1(\Omega)^+$ in \eqref{itm:1ini}.  
Next we shall provide a brief description of modeling capabilities of \eqref{itm:1ini} and \eqref{itm:2ini} in the context of a particular application.

    \begin{figure}\label{fig:image}
  	\includegraphics[scale=0.35]{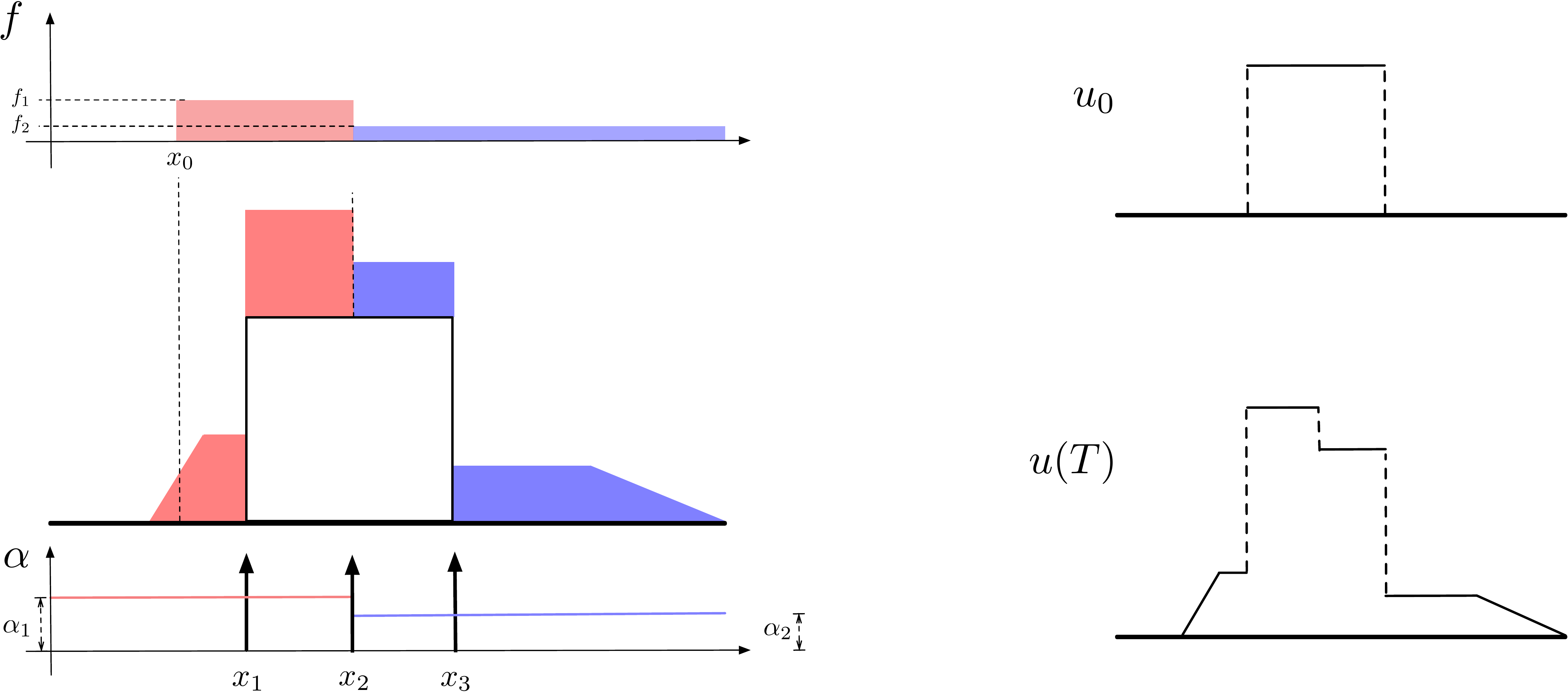}
  	\caption{Accumulation of two kinds (magenta and blue) of granular materials on discontinuous surface. (\textbf{LEFT}) Depiction of $f(t,x)=f_1\chi_{(x_0,x_2)}(x)+f_2\chi_{(x_2,1)}(x)$, the accumulation of both materials, and $\dif \alpha=\alpha_1\chi_{(x_1,x_2)}(x)\dif x+\alpha_2\chi_{(x_2,1)}(x)\dif x+ \sum_{i=1}^3 \delta(x-x_i)$. (\textbf{RIGHT}) The value of the initial supporting structure $u_0$ and the the final distribution $u(T)$. }
  \end{figure}
  
  {A possible motivation for } the above class of problems is based on the study of accumulation of granular heterogeneous material on possibly discontinuous structures. This approach was pioneered by Prigozhin \cite{Prigozhin1994,Prigozhin1996,Prigozhin1996a} in the case of homogeneous materials and a continuous support structure. In this vein, $f:(0,T)\times \Omega \to \mathbb{R}$ represents the {(density)}  rate of a granular material {being} deposited on a supporting structure $u_0:\Omega\to \mathbb{R}$.  {Moreover}, $\int_0^T\int_{\Omega}f\dif x \dif t$ is the total amount of material deposited on $\Omega$  over the time interval~$[0,T]$. {In case that} $\alpha>0$ is a real number, {this} corresponds to the classical case of a granular cohesionless material where homogeneous piles are generated. If $\alpha : \Omega \rightarrow \mathbb{R}$ {is not constant zero}, the value of $\alpha$ at a point determines the angle of repose of the material at that point, i.e., the steepness of a cone generated from a point source of material. This is the case  for heterogenous  sandpiles \cite{bocea2012models} and also a restricted case of the quasi-variational sandpile model; see \cite{BarrettPrigozhinSandpile,MR3231973,Prigozhin1986,MR3335194}. {In a more general setting, where $\alpha$ is a measure, using the approach in this paper, it is possible to generate discontinuous structures such as cliffs by preserving discontinuities in the initial supporting structure~$u_0$ and/or of $f$. Such an approach has not  yet been considered by the literature to the best of our knowledge. 
  
  	A description of the qualitative behavior of Problem \eqref{eq:Evo_u_1st} is displayed in Figure \ref{fig:image}.
  	We assume two materials with different angles of repose~$\alpha_1$ and $\alpha_2$ with $\alpha_1> \alpha_2$ are poured on the discontinuous structure $u_0(x):=\chi_{(x_1,x_2)}(x)$ for $x\in \Omega:=(0,1)$ and $0<x_1<x_2<1$. The intensity of the material being deposited is given by $f(t,x)=f_1\chi_{(x_0,x_2)}(x)+f_2\chi_{(x_2,1)}(x)$ for some points $x_0$, and $x_2$, and some $f_1,f_2>0$, i.e., the first and second materials are poured with density rates~$f_1$ and $f_2$, respectively, during the entire time interval~$(0,T)$. We further assume that a sharp edge can form at $x_2$ with maximum height of $1$, and in addition discontinuities of maximum size $1$ can be preserved at the locations of the discontinuities of $u_0$. Finally, the the gradient constraint $\alpha$ is then given by $\dif \alpha=\alpha_1\chi_{(0,x_2)}(x)\dif x+\alpha_2\chi_{(x_2,1)}(x)\dif x+ \sum_{i=1}^3 \delta(x-x_i)$, and the material is assumed to escape freely at the boundary points of $\Omega$. On the right side of Figure \ref{fig:image}, we see the comparison between $u_0$ and $u(T)$, the solution at time $T>0$; on the left we see the depiction of $f$, $\alpha$, and the accumulation regions of both materials.

  The study of solutions to \eqref{eq:Evo_u_1st} usually makes use of the semi-discretization (in time) of the problem via an implicit Euler method. In particular, we approximate the partial time derivative $\partial_t u$ by $(u^{n}-u^{n-1})/k$ for some  {time-step} $k>0$. The class of problems of interest in this paper is then given by
  	\begin{equation}\label{eq:original_problem_ini}\tag{$\mathbb{P}$}
  	 \begin{split}
	&\text{Minimize (min) }\quad \frac12\int_\Om|u(x)|^2 \dif x - \int_\Omega f(x) u(x) \dif x \quad \text{ over } u\in U_{\Gamma_D}(\Omega), \\
	&\text{subject to (s.t.) } \quad u\in K . 
	\end{split}
	\end{equation}
Notice that \eqref{eq:original_problem_ini} can be seen as the time-discrete version of \eqref{eq:Evo_u_1st} where 
the solution $u$ to \eqref{eq:original_problem_ini} is equal to $u^n$ when $f$ corresponds to $\int_{(n-1)k}^{nk}f(\tau)\dif \tau+ku^{n-1}$ {with $u^{n-1}$ given}. Closely related to the problem above, we   consider the following class of problems 
  	\begin{equation}\label{eq:predual_problem_ini}\tag{$\mathbb{P}^*$}
		\min\:\frac12\int_\Omega|\div\:\bm p(x) - f(x)|^2\dif x + J(\bm p) \quad \text{ over } {\bm p\in V_{\Gamma_{N}}(\Omega) }. 
	\end{equation}
	 {We prove that \eqref{eq:predual_problem_ini} is the \emph{Fenchel pre-dual} of problem \eqref{eq:original_problem_ini}, i.e., the Fenchel dual \cite{ekeland_temam}} of \eqref{eq:predual_problem_ini} under certain conditions is  \eqref{eq:original_problem_ini}. Several choices for $V_{\Gamma_{N}}(\Omega) $ and $J$ are explored {which are directly} related to the nature of $\alpha$. In all cases considered, $V_{\Gamma_{N}}(\Omega)$ contains $\mathrm{d}$-dimensional vector fields with divergences in $L^2(\Omega)$. In particular, we consider
\begin{enumerate}[\upshape(i)] 
  \item\label{itm:i} If $\alpha$ is a {\bf nonnegative measurable function} (additional assumptions are later explained but continuity is enough to guarantee what follows), then  {we explore two options for  $J$:} 
\begin{equation*}
	J(\bm p)=\int_\Omega \alpha(x) |\bm p(x)|_q \dif x, \qquad \text{ and } \qquad J(\bm p)=\int_\Omega \alpha \dif|\bm p|_q.
\end{equation*}  
In the first case $V_{\Gamma_{N}}(\Omega)$ is a subspace of $L^1(\Omega)^\mathrm{d}$. In the second case $V_{\Gamma_{N}}(\Omega)$ is contained in the space of $\mathbb{R}^\mathrm{d}$-valued Borel measures, so that the second functional denotes the integral of $\alpha$ with respect to the total variation measure of $\bm p$ induced by the $\ell^q$-norm. The two functionals are closely related, and the first can be seen as a restriction of the second one to measures that are absolutely continuous with respect to the Lebesgue measure.
  \item\label{itm:ii} If  $\alpha$ is a {\bf nonnegative Borel measure}, then $V_{\Gamma_D}(\Omega)$ is contained  {in the space of} maps that are $\alpha$ measurable, with $J$ given by
  \begin{equation*}
  	 J(\bm p)=\int_\Omega |\bm p|_q \dif \alpha.
  \end{equation*}
  \end{enumerate}
  
A few words are in order concerning \eqref{eq:original_problem_ini} and \eqref{eq:predual_problem_ini}. Although the objective functional in \eqref{eq:original_problem_ini} is smooth and amenable, the constraint set $K$ makes the  {entire} problem highly nonlinear and nonsmooth. The latter also holds for  \eqref{eq:predual_problem_ini} given the nature of the functional $J$. The development of solution algorithms for both problems is a rather delicate issue that requires appropriate regularization methods that can handle the nonsmothness in an asymptotic fashion.

  The paper focuses on  functional analytic properties of \eqref{eq:original_problem_ini} and \eqref{eq:predual_problem_ini} together with duality relationship properties.  {Additionally, we develop a mixed finite type method  to solve the optimality conditions corresponding to \eqref{eq:original_problem_ini} and \eqref{eq:predual_problem_ini}. } 
  
  \subsection*{Some Bibliography}
  
  The structure of Problems  \eqref{eq:predual_problem_ini} and  \eqref{eq:original_problem_ini} and their inherent difficulties are analogous to the ones that appear in the context of plasticity; see \cite{kt83,t85} and references therein. In particular, the first class of applications for diffusive variational problems with gradient constraints is the elasto-plastic torsion problem. Such a problem has been thoroughly analyzed by Br\'{e}zis, Caffarelli, Evans, Friedman,  Gerhardt, and others; see \cite{MR529814,MR544887,brezis1971equivalence,MR417887,MR385296,MR534111,MR630989,BrezisStamp}. Further, a complete account of the literature can be found in \cite{MR3347004}. A significant amount of the aforementioned works focuses on regularity of solutions, the free boundary, and the equivalence of the gradient constrained problem to a double obstacle one.
  
  The modeling of the evolution of the magnetic field in critical-state models of type-II superconductors also leads to a problem like \eqref{eq:Evo_u_1st} with the addition of a diffusive operator and a state-dependent constraint in some cases; see \cite{MR1765540,MR2652615,Prigozhin,Prigozhin1996,MR3023771,hintermuller2019dissipative,MR3119319}. See \cite{santos2002variational} for a study of evolutionary variational problems with non-constant gradient constraints, and \cite{Miranda_2018} for a complete account of evolutionary problems with derivative bounds.

Analogous problems are found  in mathematical imaging involving total variation regularization \cite{hs06,Hintermueller2004,bartels2017iterative} and more specifically in the weighted total variation version \cite{hintermuller_rautenberg_2017_optimal_I}. There, in contrast to the work here, the $L^\infty$-norm on the gradient is replaced by the $L^1$-norm, leading to a pre-dual problem with a pointwise bound in its state variable.  
  
 	\subsection{Organization of the paper.}	 Preliminaries are  provided, and some notations are made explicit in \Cref{Sec:preliminaries}; elementary results about the generalized gradient constraint are given in \Cref{sec:Grad}. In \Cref{Sec:existence}, we prove existence and uniqueness of the solution to problem \eqref{eq:original_problem_ini} for the cases when $\alpha$ is either a nonnegative Lebesgue measurable function or a nonnegative Borel measure. Existence of solutions to problem \eqref{eq:predual_problem_ini} is addressed in \Cref{Sec:existence_dual}, while for the case when $\bm p$ is a function we require $\mathrm{d}=1$, when $\bm p$ is a measure the dimension restriction is dropped.  
The relation between problems \eqref{eq:original_problem_ini} and \eqref{eq:predual_problem_ini} are considered  in \Cref{Sec:2final}, where a rigorous Fenchel duality result establishes a link between  these two problems.  In particular, in  \Cref{Ssec:pre_dual_function}, we address the  case  where $\alpha$ is a function and the  variable $\bm p$ is either a function or a measure. {The case when $\alpha$ is a measure and an extension of the duality result of the previous section is given in \Cref{Sec:3final}}. Finally in \Cref{Sec:5final}, we introduce a mixed finite element method to solve the underlying problems and present a range of numerical tests. 

	\section{Notation and Preliminaries} \label{Sec:preliminaries}


		The purpose of this section is  to introduce  notation involving spaces, and convergence notions that are used throughout  the paper; in particular,  we address the  well-known notions of Sobolev spaces and the space of functions of bounded variation. We refer the  reader  to Attouch et al. \cite{attouch} that we follow closely for this introduction together with  the book  of Adams and Fournier \cite{adams_fournier_2003}. 
	
	For a Banach space $X$, we denote its corresponding norm as $\|\cdot\|_X$.
	 For an element $F$ in the topological dual $X'$ of $X$, the duality pairing of $F$ and an arbitrary element $x\in X$ is written as $\langle F,x \rangle_{X',X}$. 
	 Throughout the paper, all Banach spaces are assumed to be real vector spaces.  
	
	The inner product on the Lebesgue space $L^2(\Omega)$ of (equivalence classes of) functions that are square integrable on $\Omega$  is denoted as $(\cdot, \cdot)$, so that $(f,g):=\int_{\Omega} f(x)g(x)\dif x$ for $f,g\in L^2(\Omega)$ where $\dif x$ refers to integration with respect to the Lebesgue measure.

	 The Sobolev space of functions in $L^r(\Omega)$ for $1\leq r<+\infty$ with weak gradients in $L^r(\Omega)^{\mathrm{d}}$ is denoted by $W^{1,r}(\Omega)$, and it is endowed with the norm
	\begin{equation*}
		\|v\|_{W^{1,r}(\Omega)}:=\|v\|_{L^{r}(\Omega)}+\|\nabla v\|_{L^{r}(\Omega)^{\mathrm{d}}},
	\end{equation*}
	where $\nabla v$ denotes the weak gradient of $v$. In the case $r=2$, we use the notation $H^1(\Omega):=W^{1,2}(\Omega)$.
	Given that $\Omega$ is assumed Lipschitz, restriction of a function $v\in W^{1,r}(\Omega)$ to the boundary $\partial\Omega$ is well-defined via the continuous trace map $\gamma_0: W^{1,r}(\Omega)\to L^{r}(\partial\Omega)$. Furthermore, the closed subspace of functions in $W^{1,r}(\Omega)$ that are zero on $\Gamma_D$ is denoted by  $W_{\Gamma_D}^{1,r}(\Omega)$, i.e., 
	\begin{equation*}
		W^{1,r}_{\Gamma_D}(\Omega):=\{v\in W^{1,r}(\Omega): \gamma_0(v)=0 \text{ on } \Gamma_D\}.
	\end{equation*}
	Similarly, we define $H^1_{\Gamma_D}(\Omega):=W^{1,2}_{\Gamma_D}(\Omega)$.
	
	The space of real-valued Borel measures $\mathrm{M}(\Omega)$ is endowed with the norm $\|\mu\|_{\mathrm{M}(\Omega)}:=|\mu|(\Omega)$, where $|\mu|$ is defined for an arbitrary open set $O$ as 
	\begin{equation*}
		|\mu|(O)=\text{sup}\left\{ \langle\mu,z \rangle_{\mathrm{M}(\Omega),C_0(\Omega)}\::\: z\in {C_0(\Omega) },\: \mathrm{supp}(z)\subset O,\; |z(x)|\le 1, \text{ for every }x\in O \right\}.
	\end{equation*}
	Note that $\langle\mu,z \rangle_{\mathrm{M}(\Omega),C_0(\Omega)}=\int_\Omega z\; \dif \mu$, and that $|\mu|$ defines a Borel measure in $\mathrm{M}^+(\Omega)$, the subset of non-negative elements of $\mathrm{M}(\Omega)$, i.e., $\sigma\in \mathrm{M}^+(\Omega)$ if $\sigma(B)\geq 0$ for every Borel set $B\subset \Omega$.
	
	We denote by $\BV(\Om)$, the space of functions $v$ in $L^1(\Omega)$, for which the total variation semi-norm 
	\[ \int_\Om |\mathrm{D}v|_{p}=\text{sup}\left\{ \int_\Om v\; \div\: \bm{p}\; \dif x\::\: \bm{p}\in {C^1_0(\Om)^\mathrm{d}},\; |\bm{p}(x)|_{q}\le 1, \text{ for every }x\in\Om \right\} \]
	is finite and where $q$ is the H\"{o}lder conjugate of $p$, i.e., $1/p+1/q=1$; see~\cite[Section~10.1]{attouch}. The space $\BV(\Om)$ is a Banach space endowed with the norm
	\begin{equation*}
		\|v\|_{\BV(\Om)}:=\|v\|_{L^1(\Omega)}+\int_\Om |\mathrm{D}v|_{p}.
	\end{equation*}
	The operator $\mathrm{D}$ represents the \emph{distributional gradient}, and for a $v\in \BV(\Omega)$, $\mathrm{D}v$ is a  $\RR^d$-valued Borel measure. We use $|\mathrm{D}v|_p$ to denote the total variation measure (associated to the $\ell^p$-norm) of $\mathrm{D}v$, and the total mass $|\mathrm{D}v|_p(\Omega)$ is by definition
	\begin{equation*}
		|\mathrm{D}v|_p(\Omega)=\int_\Om |\mathrm{D}v|_{p}.
	\end{equation*}
	Furthermore, {the Lebesgue decomposition result applied to $\mathrm{D}v$ implies that} there exist measures $\mathrm{D}_a v$ and $\mathrm{D}_s v$ such that 
	\[ \mathrm{D}v=\mathrm{D}_a v +\mathrm{D}_s v,\]
	with $\mathrm{D}_a v$ and $\mathrm{D}_s v$ respectively being absolutely continuous and singular with respect to the  $\mathrm{d}$-dimensional  Lebesgue measure.

	We define now the notions of  \textit{weak} and  \textit{intermediate convergence} of sequences in $\BV(\Om)$ which provide different topologies on the space $\BV(\Om)$. The former is obtained by a subsequence of a bounded sequence in $\BV(\Om)$.  Moreover, the latter is sufficient  to preserve boundary conditions in the sense of the trace {as stated in \Cref{thm:traceBV}} below.
	\begin{definition}[\textsc{Weak convergence for } $\BV(\Om)$]\label{def:weakBV}
		Let $\{u_n\}$ be a sequence in $\BV(\Omega)$ and $u^*\in \BV(\Omega)$. We say that $u_n$ converges to $u^*$ weakly, denoted as $u_n \rightharpoonup  u^*$ in $\BV(\Omega)$, if
		\begin{equation*}
		u_n\to u^* \text{ in } L^1(\Omega), \qquad \text{ and } \qquad  |\mathrm{D}u_n|_p\rightharpoonup |\mathrm{D}u^*|_p \text{ in } \mathrm{M}(\Omega).
		\end{equation*}
	\end{definition}
	Recall that if $\{\mu_n\}$ is a sequence of measures in $\mathrm{M}(\Omega)$ then $\mu_n\rightharpoonup \mu$ in $\mathrm{M}(\Omega)$ for some  $\mu\in \mathrm{M}(\Omega)$, that is, $\mu_n$ weakly converges to $\mu$, if
	\begin{equation*}
	\int_{\Omega}g\dif \mu_n\to 	\int_{\Omega}g\dif \mu,
	\end{equation*}
	for all $g\in C_0(\Omega)$.  
	
	The definition \ref{def:weakBV} is  understood in light of the following fact: If $\{u_n\}$ is a bounded sequence in $\BV(\Omega)$, there exists $u^*\in \BV(\Omega)$ such that along a subsequence $u_n \rightharpoonup  u^*$ in $\BV(\Omega)$. The latter follows since the embedding $\BV(\Omega)\hookrightarrow L^1(\Omega)$ is compact (see Attouch et al. \cite[Theorem 10.1.4.]{attouch}) for Lipschitz domains, and since a bounded sequence of measures admits a weakly  convergent  subsequence.
	
We shall use the direct method of calculus of variations to establish existence of solutions to problems in $\BV(\Omega)$ and with Dirichlet homogeneous boundary conditions on $\Gamma_D$. The space of interest is $\BV_{\Gamma_D}(\Omega)$ defined as
\begin{equation*}
		\BV_{\Gamma_D}(\Omega):=\{v\in \BV(\Omega): \gamma_0(v)=0 \text{ on } \Gamma_D\} ,
	\end{equation*} 
where $\gamma_0$ is a trace operator; see \cite[section 10.2]{attouch}. Notice that we use the same notation for the trace operator in  Sobolev spaces $W^{1,p}(\Omega)$. There is a fundamental issue with the trace in $\BV(\Omega)$ and the application of the direct method as we show next with an standard example  adapted from \cite{attouch}.

	Consider a bounded sequence $\{u_n\}$ in $\BV_{\Gamma_D}(\Omega)$. 
	Then, we can extract a subsequence (not relabeled) of $\{u_n\}$ such that $u_n \rightharpoonup u^*$ in $\BV(\Omega)$ . The problem is that in general it is not possible to say that $u^*\in \BV_{\Gamma_D}(\Omega)$: Let $\Omega=(0,1)$ with $\Gamma_D=\{0\}$, and consider $\{v_n\}$ defined as 
	\begin{equation*}
	v_n(x) = \begin{cases} nx, &\mbox{if } 0<x<1/n, \\
	1, & \mbox{if } 1/n\leq x<1. \end{cases} 
	\end{equation*}
	Then, $v_n\in \BV_{\Gamma_D}(\Omega)$, and $v_n\rightharpoonup v^* \in \BV(\Omega)\setminus \BV_{\Gamma_D}(\Omega)$, with $v^*=1$. The underlying  reason is that the trace operator in $\BV(\Omega)$ is not continuous with respect to \emph{weak convergence}, but it is with respect to the \emph{intermediate convergence} subsequently defined.  We further notice  that $|\mathrm{D}v_n|(0,1)=1$ and $|\mathrm{D}v^*|(0,1)=0$, this discrepancy is central to the issue we are considering. 
	
	\begin{definition}[\textsc{Intermediate convergence}]
		Let $\{u_n\}$ be a sequence in $\BV(\Omega)$ and $u^*\in \BV(\Omega)$. We say that $u_n$ converges to $u^*$ in the sense of intermediate convergence if
		\begin{equation*}
		u_n\to u^* \text{ in } L^1(\Omega), \qquad \text{ and } \qquad  \int_{\Omega}|\mathrm{D}u_n|_p\to \int_{\Omega} |\mathrm{D}u^*|_p.
		\end{equation*}
	\end{definition}
	
	The name \emph{intermediate convergence} arises since it describes a stronger topology than the one of weak convergence, but not as strong as the norm one. The   importance of  the intermediate convergence can be seen in the following result which holds in our case since $\Omega\subset \mathbb{R}^\mathrm{d}$ is a Lipschitz bounded domain.  We refer to Attouch et al. \cite[Theorem 10.2.2]{attouch} for its proof. 	
	\begin{theorem}\label{thm:traceBV}
		The trace operator $\gamma_0:\BV(\Omega)\to L^1(\partial \Omega)$ is continuous when $\BV(\Omega)$ is equipped with the intermediate convergence and when $L^1(\partial \Omega)$ is equipped with the strong convergence.
	\end{theorem}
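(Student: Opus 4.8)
The plan is to deduce continuity from a single quantitative \emph{collar estimate} for the trace operator, combined with a soft limiting argument that uses precisely the two features built into intermediate convergence: $L^1(\Omega)$-convergence and convergence of the total masses $\int_\Omega|\mathrm{D}u_n|_p$. (This is close in spirit to the route of Attouch et al.\ \cite{attouch}.)

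\textbf{Step 1 (collar estimate).} First I would establish that there are constants $C,\delta_0>0$ depending only on $\Omega$ such that, for every $v\in\BV(\Omega)$ and every $0<\delta<\delta_0$,
\begin{equation*}
\|\gamma_0 v\|_{L^1(\partial\Omega)}\le C\Big(\tfrac1\delta\,\|v\|_{L^1(\Omega_\delta)}+\int_{\Omega_\delta}|\mathrm{D}v|_p\Big),\qquad \Omega_\delta:=\{x\in\Omega:\mathrm{dist}(x,\partial\Omega)<\delta\}.
\end{equation*}
Since $\partial\Omega$ is compact and Lipschitz, I would cover it by finitely many open sets $U_1,\dots,U_N$ in which, in suitable orthonormal coordinates $(y',y_{\mathrm{d}})$, the set $\Omega\cap U_i$ lies above the graph of a Lipschitz function $\phi_i$; fix $h>0$ small enough that $(y',\phi_i(y')+t)\in\Omega\cap U_i$ for $t\in(0,h)$ and $y'$ in the projection $O_i'$ of the piece of $\partial\Omega$ assigned to $U_i$. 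The bi-Lipschitz flattening $\Psi_i(y',t)=(y',\phi_i(y')+t)$, with $\det D\Psi_i\equiv1$, carries $O_i'\times(0,h)$ into $\Omega$, and the trace of $w:=v\circ\Psi_i\in\BV(O_i'\times(0,h))$ on the flat face $O_i'\times\{0\}$ agrees, up to the bounded factor $\sqrt{1+|\nabla\phi_i|^2}$, with $\gamma_0 v$ on the corresponding boundary piece. On $O_i'\times(0,h)$ I would invoke the one-dimensional slicing (Fubini) theory for $\BV$ to get, for a.e.\ $0<s<t<h$, the Poincar\'e-type bound $\int_{O_i'}|w(y',t)-w(y',s)|\,\dif y'\le\int_{O_i'\times(s,t)}|\mathrm{D}w|_p$ (test with fields supported in the last coordinate); letting $s\downarrow0$ identifies the $L^1(O_i')$-limit of the slices $w(\cdot,s)$ with $\gamma_0 w(\cdot,0)$ and yields $\int_{O_i'}|\gamma_0 w(\cdot,0)-w(\cdot,t)|\,\dif y'\le\int_{O_i'\times(0,t)}|\mathrm{D}w|_p$, and averaging over $t\in(0,\delta)$ gives $\int_{O_i'}|\gamma_0 w(\cdot,0)|\,\dif y'\le\tfrac1\delta\|w\|_{L^1(O_i'\times(0,\delta))}+\int_{O_i'\times(0,\delta)}|\mathrm{D}w|_p$. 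Since $\Psi_i\big(O_i'\times(0,\delta)\big)\subset\Omega_\delta$, changing variables back (unit Jacobian) turns $\|w\|_{L^1}$ into $\|v\|_{L^1(\Omega_\delta)}$, while pushing competitor vector fields forward through $D\Psi_i$ (bounded operator norm, unit Jacobian) turns $\int|\mathrm{D}w|_p$ into a bounded multiple of $\int_{\Omega_\delta}|\mathrm{D}v|_p$; summing over $i=1,\dots,N$ gives the estimate.

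\textbf{Step 2 (passing to the limit).} Let $u_n\to u^*$ in the intermediate sense. Applying the collar estimate to $u_n-u^*$ and using linearity of $\gamma_0$,
\begin{equation*}
\|\gamma_0 u_n-\gamma_0 u^*\|_{L^1(\partial\Omega)}\le C\Big(\tfrac1\delta\,\|u_n-u^*\|_{L^1(\Omega)}+\int_{\Omega_\delta}|\mathrm{D}u_n|_p+\int_{\Omega_\delta}|\mathrm{D}u^*|_p\Big),\qquad 0<\delta<\delta_0.
\end{equation*}
Fix $\delta$. The first term vanishes as $n\to\infty$. Writing $A_\delta:=\{x\in\Omega:\mathrm{dist}(x,\partial\Omega)>\delta\}$, which is open, lower semicontinuity of $v\mapsto\int_{A_\delta}|\mathrm{D}v|_p$ under $L^1(\Omega)$-convergence — immediate from the dual (supremum over test fields) definition of the total variation — gives $\liminf_n\int_{A_\delta}|\mathrm{D}u_n|_p\ge\int_{A_\delta}|\mathrm{D}u^*|_p$, so, using $\int_\Omega|\mathrm{D}u_n|_p\to\int_\Omega|\mathrm{D}u^*|_p$,
\begin{equation*}
\limsup_n\int_{\Omega_\delta}|\mathrm{D}u_n|_p\le\lim_n\int_\Omega|\mathrm{D}u_n|_p-\liminf_n\int_{A_\delta}|\mathrm{D}u_n|_p\le\int_{\Omega\setminus A_\delta}|\mathrm{D}u^*|_p.
\end{equation*}
Hence $\limsup_n\|\gamma_0 u_n-\gamma_0 u^*\|_{L^1(\partial\Omega)}\le 2C\int_{\Omega\setminus A_\delta}|\mathrm{D}u^*|_p$ for every small $\delta$; since $|\mathrm{D}u^*|_p$ is a finite measure on $\Omega$ and $\Omega\setminus A_\delta\downarrow\emptyset$ as $\delta\downarrow0$, the right-hand side tends to $0$, so $\gamma_0 u_n\to\gamma_0 u^*$ in $L^1(\partial\Omega)$.

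\textbf{Main obstacle.} Step 2 is soft, but it genuinely needs the convergence of the masses: without it (i.e.\ under mere weak $\BV$-convergence) the control of $\limsup_n\int_{\Omega_\delta}|\mathrm{D}u_n|_p$ fails, consistent with the counterexample preceding the statement. The substantive work is Step 1 — in particular, bounding $\int|\mathrm{D}(v\circ\Psi_i)|_p$ by $\int|\mathrm{D}v|_p$ under the bi-Lipschitz flattening, since the total-variation measure does not rescale by a scalar and one must argue via the pushforward of competitor vector fields — together with a careful use of the one-dimensional slicing theory and of the $L^1$-characterization of the $\BV$-trace on a flat boundary patch.
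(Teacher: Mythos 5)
The paper does not prove this theorem; it is quoted directly from Attouch et al.\ \cite[Theorem 10.2.2]{attouch}. Your argument is correct and is essentially the standard proof behind that citation: a quantitative collar estimate $\|\gamma_0 v\|_{L^1(\partial\Omega)}\le C(\delta^{-1}\|v\|_{L^1(\Omega_\delta)}+\int_{\Omega_\delta}|\mathrm{D}v|_p)$ obtained by local flattening and one-dimensional slicing, combined with the observation that intermediate convergence yields $\limsup_n\int_{\Omega_\delta}|\mathrm{D}u_n|_p\le\int_{\Omega\setminus A_\delta}|\mathrm{D}u^*|_p$ (lower semicontinuity of the total variation on the complementary open set together with convergence of the total masses), so the boundary-layer contribution is uniformly small. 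The one step needing genuine care, which you correctly flag, is comparing $\int|\mathrm{D}(v\circ\Psi_i)|_p$ with $\int|\mathrm{D}v|_p$ under the bi-Lipschitz flattening via pushforward of competitor fields; the resulting constant depends on $p$ and on the Lipschitz character of $\partial\Omega$, but this is harmless.
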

	We also note that $C^\infty(\overline{\Om})$ is dense in $\BV(\Om)$ in the intermediate convergence topology, for a proof see \cite[Theorem 10.1.2]{attouch}.

	\subsection{The gradient constraint} \label{sec:Grad}
	
	A few words are in order concerning the gradient constraint given in the set $K$ defined in \eqref{eq:SetK}. Although in the case when $G=\nabla$ the situation is somewhat standard, if $G=\mathrm{D}$, the distributional gradient for $\BV$ functions, require several  {non-trivial} explanations. In the cases where $\alpha$ is a Borel measure and $v\in \BV(\Om)$, the inequality 
	\begin{equation}\label{eq:IneqIni}
		|\mathrm{D} v|_{p}\leq\alpha
	\end{equation}
	 in \eqref{eq:SetK} 
	is understood in the sense of measures, i.e., \eqref{eq:IneqIni} holds true if 
	\begin{equation} \label{eq:measure_ineq}
		\int_\Om w |\mathrm{D} v|_{p} \leq \int_\Om w \da \,\text{ for all }w\in C^\infty_0(\Omega)\text{ with }w\geq0\text{ in }\Omega,
	\end{equation}
	and equivalently, for every Borel measurable set $S\subset\Omega$, it holds that
	\begin{equation}\label{eq:grad_constraint_measure_borel}
		\int_S |\mathrm{D}v|_{p} \leq \int_S \da.
	\end{equation}
Given that nonnegative Borel measures are inner and outer regular (\cite[Proposition 4.2.1]{attouch}) the condition \eqref{eq:measure_ineq} is equivalent to
\begin{equation}\label{eq:grad_constraint_measure_borel2}
			 \int_{O}|\mathrm{D}v|_{p} \leq\int_O \dif\alpha
\end{equation}
for all open sets $O\subset \Omega$.

It is possible to replace $C_0^\infty(\Omega)$ in \eqref{eq:measure_ineq} by 
	$C^\infty(\overline\Omega)$, which we discuss next.
	
	\begin{proposition}
		The condition in \eqref{eq:measure_ineq} is equivalent to
			\begin{equation}\label{eq:grad_constraint_measure_Cbd}
		\int_\Omega w |\mathrm{D} v|_{p}\leq \int_\Omega w \dif\alpha \quad
		\text{for every } w\in C^\infty(\overline\Omega)\text{ with } w\geq 0 \mbox{ in } \Omega.
	\end{equation}
	\end{proposition}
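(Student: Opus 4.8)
The plan is to prove the equivalence by showing each condition implies the other. The implication \eqref{eq:grad_constraint_measure_Cbd} $\Rightarrow$ \eqref{eq:measure_ineq} is immediate, since $C_0^\infty(\Omega)\subset C^\infty(\overline\Omega)$ and nonnegativity in $\Omega$ is preserved; thus any $w\in C_0^\infty(\Omega)$ with $w\geq 0$ is an admissible test function in \eqref{eq:grad_constraint_measure_Cbd}. The substantive direction is \eqref{eq:measure_ineq} $\Rightarrow$ \eqref{eq:grad_constraint_measure_Cbd}, and here the natural route is to first observe (as already noted in the excerpt) that \eqref{eq:measure_ineq} is equivalent to the Borel-set inequality \eqref{eq:grad_constraint_measure_borel} for all Borel sets $S\subset\Omega$, and in particular to \eqref{eq:grad_constraint_measure_borel2} for all open $O\subset\Omega$. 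From there the claim for $w\in C^\infty(\overline\Omega)$, $w\geq 0$, follows by a standard layer-cake / approximation-by-simple-functions argument applied to the two nonnegative Borel measures $\mu_1 := |\mathrm{D}v|_p$ and $\mu_2 := \alpha$.

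Concretely, I would fix $w\in C^\infty(\overline\Omega)$ with $w\geq 0$ in $\Omega$, and use the layer-cake representation
\begin{equation*}
\int_\Omega w \,\dif\mu_i = \int_0^\infty \mu_i\big(\{x\in\Omega : w(x) > t\}\big)\,\dif t, \qquad i = 1,2.
\end{equation*}
Since $w$ is continuous on $\Omega$, each superlevel set $\{w > t\}$ is open in $\Omega$, so \eqref{eq:grad_constraint_measure_borel2} gives $\mu_1(\{w>t\}) \leq \mu_2(\{w>t\})$ for every $t>0$; integrating this pointwise inequality in $t$ over $(0,\infty)$ yields $\int_\Omega w\,\dif\mu_1 \leq \int_\Omega w\,\dif\mu_2$, which is exactly \eqref{eq:grad_constraint_measure_Cbd}. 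An alternative, if one prefers to avoid invoking the layer-cake formula, is to approximate $w$ from below by an increasing sequence of nonnegative simple functions $w_n = \sum_k c_k^{(n)} \chi_{O_k^{(n)}}$ built on open superlevel sets of $w$, apply \eqref{eq:grad_constraint_measure_borel2} termwise with linearity, and pass to the limit by monotone convergence on both sides.

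The only point requiring a little care — and the step I expect to be the mildest obstacle — is the reduction of \eqref{eq:measure_ineq} to the open-set (or Borel-set) inequality with the correct handling of the boundary: the test functions in \eqref{eq:measure_ineq} are compactly supported in $\Omega$, whereas $w\in C^\infty(\overline\Omega)$ need not vanish near $\partial\Omega$, yet both measures $|\mathrm{D}v|_p$ and $\alpha$ live on $\Omega$ (a Borel, not necessarily closed, set), so superlevel sets are to be intersected with $\Omega$ and the inequality \eqref{eq:grad_constraint_measure_borel2} is to be used for sets open \emph{in} $\Omega$. Inner and outer regularity of nonnegative Borel measures (\cite[Proposition 4.2.1]{attouch}), already cited in the excerpt to pass from \eqref{eq:measure_ineq} to \eqref{eq:grad_constraint_measure_borel}, is exactly what legitimizes working with these open-in-$\Omega$ sets, and no finiteness issue arises because $|\mathrm{D}v|_p(\Omega) < \infty$ for $v\in\BV(\Omega)$ while, should $\alpha(\Omega) = \infty$, the inequality is trivial wherever the right-hand side is infinite. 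Assembling these pieces gives the equivalence.
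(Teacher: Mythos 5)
Your proof is correct, but it follows a genuinely different route from the paper's. The paper works directly at the level of test functions: by inner regularity it picks closed sets $K_n$ with $\int_{\Omega\setminus K_n}|\mathrm{D}v|_p\to 0$ and $\int_{\Omega\setminus K_n}\dif\alpha\to 0$, constructs nonnegative, uniformly bounded cutoffs $w_n\in C_0^\infty(\Omega)$ agreeing with the given $\tilde w\in C^\infty(\overline\Omega)$ on $K_n$, and passes to the limit in \eqref{eq:measure_ineq} using that the error $\int_\Omega(\tilde w-w_n)\,\dif\mu$ is supported in $\Omega\setminus K_n$. You instead first pass from \eqref{eq:measure_ineq} to the open-set inequality \eqref{eq:grad_constraint_measure_borel2} (an equivalence the paper already asserts via inner and outer regularity) and then upgrade to arbitrary nonnegative $w\in C^\infty(\overline\Omega)$ by the layer-cake identity $\int_\Omega w\,\dif\mu_i=\int_0^\infty\mu_i(\{w>t\})\,\dif t$, using that superlevel sets of a continuous function are open. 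Both arguments are sound; yours avoids the explicit construction of the cutoff sequence (which the paper asserts rather than builds) at the price of invoking Tonelli, while the paper's stays entirely within the test-function formulation. Your cautionary remarks about boundary behavior and possible infiniteness of $\alpha$ are harmless but moot here: both $|\mathrm{D}v|_p$ and $\alpha\in\mathrm{M}^+(\Omega)$ are finite measures in the paper's setting, and since $\Omega$ is open, sets open in $\Omega$ are open in $\RR^{\mathrm{d}}$, so \eqref{eq:grad_constraint_measure_borel2} applies verbatim to the superlevel sets.
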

	
	\begin{proof} Suppose that \eqref{eq:measure_ineq} holds true and let $K_n$ be a sequence of closed sets such that
	\begin{equation}\label{eq:kn_condition}
	\int_{\Omega\setminus K_n} |\mathrm{D}v|_p\to 0\quad
		\text{and}\quad
	\int_{\Omega\setminus K_n} \da\to 0 . 
	\end{equation}
	{The sequence $\{K_n\}$ exists given that measures in $\mathrm{M}^+(\Omega)$ are inner regular; see \cite[Proposition 4.2.1]{attouch}.}
	Let $\tilde w\in C^\infty(\overline\Omega)$ be nonnegative and arbitrary.
	
	Accordingly, let  $\{w_n\}$  in $C_0^\infty(\Omega)$ be {nonnegative}, uniformly bounded  in $\Omega$, and such that  $w_n=\tilde w$ in $K_n$.
	Hence $|\tilde w|+|w_n|$ can be uniformly estimated by a constant, and by \eqref{eq:kn_condition} it holds that 
	\begin{equation*}
		\int_{\Omega} (\tilde w - w_n) |\mathrm{D}v|=\int_{\Omega\setminus K_n} (\tilde w - w_n) |\mathrm{D}v|\to 0\quad
		\text{and}\quad
		\int_{\Omega} (\tilde w - w_n) \da=\int_{\Omega\setminus K_n} (\tilde w - w_n) \da\to 0.
	\end{equation*}
	Since the inequality in\ \eqref{eq:grad_constraint_measure_Cbd}  holds for every $w_n$ {by initial assumption}, it also holds in the limit for $\tilde w$. Furthermore, \eqref{eq:grad_constraint_measure_Cbd} immediately implies \eqref{eq:measure_ineq}, so the result is proven.
	\end{proof}

		\section{Existence Theory for \texorpdfstring{$(\mathbb{P})$}{P}}\label{Sec:existence}


	In this section, we discuss the existence and uniqueness of solution to the problem \cref{eq:original_problem_ini}. We start with the case when $\alpha$ is a measure, and the case when $\alpha$ is a function follows as a special one. In particular, existence of solutions is studied in  the function spaces  $U_{\Gamma_D}(\Omega)=\BV_{\Gamma_D}(\Omega)$ and $U_{\Gamma_D}(\Omega)=W^{1,1}_{\Gamma_D}(\Omega)$. Both of these spaces share the same difficulty: Bounded sequences do not necessarily admit convergent (in some sense) subsequences that preserve the zero boundary condition  on  $\Gamma_D$ in the limit. The main purpose of this section is to overcome this obstacle.
	\subsection{The case when \texorpdfstring{$\alpha$}{alpha} is a nonnegative Borel measure} 
%
	We consider in this section that $\alpha\in \mathrm{M}^+(\Omega)$ and hence the state space is given by 
	\begin{equation*}
		U_{\Gamma_D}(\Omega)=\BV_{\Gamma_D}(\Om).
	\end{equation*}
	We start  {by proving} the following lemma which gives sequential precompactness of some classes of bounded sets in $\BV_{\Gamma_D}(\Om)$. {These bounded sets} are subsets  of $K$ which in this case is defined as
	\begin{equation*}
	K= \{v\in \BV_{\Gamma_D}(\Omega) : |\mathrm{D} v|_{p}\leq\alpha\}.
\end{equation*}

	\begin{lemma}\label{lem:Pro}
		Let $\alpha\in \mathrm{M}^+(\Omega)$, then the set
		\begin{equation*}
			K^*=K\cap \{v\in L^1(\Omega): \|v\|_{L^1(\Omega)}\leq M\}
		\end{equation*} 
		is sequentially precompact in the sense of the intermediate convergence of $\BV(\Omega)$ for any $M>0$. 
	\end{lemma}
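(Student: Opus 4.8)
The plan is to run the direct method of the calculus of variations on an arbitrary sequence $\{v_n\}\subset K^*$, extracting an $L^1$-convergent subsequence by the compact embedding $\BV(\Om)\hookrightarrow L^1(\Om)$ and then using the gradient bound $|\mathrm Dv_n|_p\le\alpha$ to upgrade this to intermediate convergence. The constraint enters twice: once to get boundedness in $\BV(\Om)$, and once — crucially, via the inner regularity of $\alpha$ — to prevent the escape of variation to $\partial\Om$ exhibited by the example in \Cref{Sec:preliminaries}.

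First I would note that $\{v_n\}$ is bounded in $\BV(\Om)$: evaluating $|\mathrm Dv_n|_p\le\alpha$ on $\Om$ gives $\int_\Om|\mathrm Dv_n|_p=|\mathrm Dv_n|_p(\Om)\le\alpha(\Om)<+\infty$, and $\|v_n\|_{L^1(\Om)}\le M$ by hypothesis. By compactness of $\BV(\Om)\hookrightarrow L^1(\Om)$ for Lipschitz domains (\cite[Theorem 10.1.4]{attouch}) and weak-$*$ compactness of bounded sets in $\mathrm M(\Om)$, pass to a non-relabeled subsequence with $v_n\to v^*$ in $L^1(\Om)$ for some $v^*\in\BV(\Om)$ and $|\mathrm Dv_n|_p\rightharpoonup\mu$ in $\mathrm M(\Om)$ for some $\mu\in\mathrm M^+(\Om)$. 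Lower semicontinuity of the total variation under $L^1$-convergence, written against $C^1_0$ test vector fields, together with $|\mathrm Dv_n|_p\rightharpoonup\mu$ gives $|\mathrm Dv^*|_p\le\mu$ as measures, while $\mu(O)\le\liminf_n|\mathrm Dv_n|_p(O)\le\alpha(O)$ for open $O$ by \eqref{eq:grad_constraint_measure_borel2}; hence $|\mathrm Dv^*|_p\le\mu\le\alpha$, so $v^*$ already satisfies the gradient constraint.

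The core of the argument is the uniform control of mass near the boundary. Given $\varepsilon>0$, inner regularity of $\alpha\in\mathrm M^+(\Om)$ (\cite[Proposition 4.2.1]{attouch}) furnishes a compact $K_\varepsilon\subset\Om$ with $\alpha(\Om\setminus K_\varepsilon)<\varepsilon$; applying \eqref{eq:grad_constraint_measure_borel} to the Borel set $\Om\setminus K_\varepsilon$ yields $|\mathrm Dv_n|_p(\Om\setminus K_\varepsilon)<\varepsilon$ for all $n$, i.e. $\{|\mathrm Dv_n|_p\}$ is tight uniformly in $n$. Picking $g\in C_0(\Om)$ with $0\le g\le1$ and $g\equiv1$ on $K_\varepsilon$ one gets $|\mathrm Dv_n|_p(\Om)\le\int_\Om g\,\mathrm d|\mathrm Dv_n|_p+\varepsilon$; passing to the limit and using weak lower semicontinuity of the mass on the open set $\Om$ gives $\mu(\Om)\le\liminf_n|\mathrm Dv_n|_p(\Om)\le\limsup_n|\mathrm Dv_n|_p(\Om)\le\int_\Om g\,\mathrm d\mu+\varepsilon\le\mu(\Om)+\varepsilon$, so letting $\varepsilon\to0$ we obtain $\int_\Om|\mathrm Dv_n|_p\to\mu(\Om)$. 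The same tightness preserves the boundary condition: embed $\Om$ into a slightly larger bounded Lipschitz domain $\Om'$ with $\Gamma_D$ in its interior, and extend by zero, $Ev_n:=v_n\mathbf 1_\Om$; since $\gamma_0(v_n)=0$ on $\Gamma_D$ no jump is created across $\Gamma_D$, so $Ev_n\in\BV(\Om')$ with $|\mathrm D(Ev_n)|_p(\Om')=|\mathrm Dv_n|_p(\Om)$ and $Ev_n\to v^*\mathbf 1_\Om$ in $L^1(\Om')$. Choosing an open neighbourhood $W\subset\Om'$ of $\Gamma_D$ with $W\cap\Om\subset\Om\setminus K_\varepsilon$, lower semicontinuity gives $|\mathrm D(v^*\mathbf 1_\Om)|_p(W)\le\liminf_n|\mathrm Dv_n|_p(W\cap\Om)\le\varepsilon$; since the jump part of $\mathrm D(v^*\mathbf 1_\Om)$ carried by $\Gamma_D$ controls $\int_{\Gamma_D}|\gamma_0 v^*|\,\mathrm d\mathcal H^{d-1}$ up to a positive multiplicative constant depending only on $p$ and $d$, letting $\varepsilon\to0$ forces $\gamma_0(v^*)=0$ $\mathcal H^{d-1}$-a.e. on $\Gamma_D$, so $v^*\in\BV_{\Gamma_D}(\Om)$ and $|\mathrm D(v^*\mathbf 1_\Om)|_p(\Om')=|\mathrm Dv^*|_p(\Om)$.

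The step I expect to be the main obstacle is the final identification of the limits of the total variations, i.e. showing $\mu(\Om)=|\mathrm Dv^*|_p(\Om)$ — equivalently, given $|\mathrm Dv^*|_p\le\mu$, the reverse mass inequality $\mu(\Om)\le|\mathrm Dv^*|_p(\Om)$. Here one must exclude not only escape to the boundary (handled above) but also any interior loss of variation, and this is precisely where the full strength of $|\mathrm Dv_n|_p\le\alpha$, combined with the trace identity of the previous step transported to the extended functions on $\Om'$, has to be exploited. Once this is in place, $\int_\Om|\mathrm Dv_n|_p\to\int_\Om|\mathrm Dv^*|_p$, which together with $v_n\to v^*$ in $L^1(\Om)$ is exactly intermediate convergence of the chosen subsequence; hence $K^*$ is sequentially precompact in the sense of the intermediate convergence of $\BV(\Om)$, and by \Cref{thm:traceBV} every such limit lies in $\BV_{\Gamma_D}(\Om)$, which is the property actually needed in the existence theory for \eqref{eq:original_problem_ini}.
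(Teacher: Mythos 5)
Your overall strategy (boundedness in $\BV(\Omega)$, $L^1$-compactness, tightness of the total-variation measures via inner regularity of $\alpha$, control of mass near $\partial\Omega$) matches the paper's, and your tightness step is exactly the paper's key observation. But the proposal has a genuine, self-acknowledged gap at the decisive point: you extract $|\mathrm{D}v_n|_p\rightharpoonup\mu$ for some $\mu\in\mathrm{M}^+(\Omega)$, obtain only the one-sided bound $|\mathrm{D}v^*|_p\le\mu$ from lower semicontinuity, and then defer the reverse mass inequality $\mu(\Omega)\le|\mathrm{D}v^*|_p(\Omega)$ with the phrase ``once this is in place.'' That identification is not a routine consequence of the tools you have assembled: tightness rules out escape of variation to $\partial\Omega$ but says nothing about interior oscillation, where the total variations can converge weakly-$*$ to a measure strictly larger than $|\mathrm{D}v^*|_p$. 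For instance, on $\Omega=(0,1)$ with $\alpha$ the Lebesgue measure, $v_n(x)=n^{-1}\sin(nx)$ lies in $K^*$ and converges to $v^*=0$ in $L^1(\Omega)$, yet $|\mathrm{D}v_n|_p\rightharpoonup\tfrac{2}{\pi}\dif x\neq 0=|\mathrm{D}v^*|_p$ and $\int_\Omega|\mathrm{D}v_n|_p\to\tfrac{2}{\pi}$ along every subsequence; so the inequality you still need can actually fail under the hypotheses $|\mathrm{D}v_n|_p\le\alpha$ and $\|v_n\|_{L^1(\Omega)}\le M$ alone, and no argument of the kind you sketch will supply it.

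The paper circumvents (rather than resolves) this issue by invoking its Definition \ref{def:weakBV} of weak convergence in $\BV(\Omega)$, in which $|\mathrm{D}v_n|_p\rightharpoonup|\mathrm{D}v^*|_p$ is built into the extracted subsequential limit; the proof then only needs to upgrade weak convergence of $\{|\mathrm{D}v_n|_p\}$ to narrow convergence, which is precisely what your inner-regularity/Prokhorov argument delivers, and the preservation of the trace follows from Theorem \ref{thm:traceBV} without your extension-by-zero construction. Your more standard compactness extraction (a weak-$*$ limit $\mu$ not a priori equal to $|\mathrm{D}v^*|_p$) is the careful one, but it leaves you exactly one inequality short of intermediate convergence, and that inequality is the entire content of the lemma. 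To complete a proof along your lines you would have to either justify the paper's claim that bounded sequences admit subsequences converging weakly in the sense of Definition \ref{def:weakBV}, or exhibit another mechanism excluding interior loss of variation; the oscillation example above shows that neither is available from the stated hypotheses.
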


	\begin{proof}
		Let $\{v_n\}$ be a sequence in  $K^*$, then it is bounded in $\BV(\Omega)$, and thus $v_n \rightharpoonup v^*$ in $\BV(\Omega)$ for some $v^*\in \BV(\Omega)$ along a subsequence (not relabelled). Since $|\mathrm{D}v_n|_p\rightharpoonup |\mathrm{D}v^*|_p$  in  $\mathrm{M}(\Omega) $,  and $|\mathrm{D} v_n|_p\leq\alpha$ it follows that for every open set $O\subset \Omega$ that 
		\begin{equation}\label{eq:IneqO}
			|\mathrm{D} v^*|_p(O)\leq \liminf_{n\to\infty} |\mathrm{D} v_n|_p(O) \leq \alpha (O),
		\end{equation}
		where we have used the lower-semicontinuity property for open sets of weak convergence of measures; see \cite[Proposition 4.2.3]{attouch}. Additionally, since elements in $\mathrm{M}(\Omega)$ are outer (and inner) regular, we have that for a Borel set $B$ it holds that $\mu(B)=\inf \mu (O)$ where the infimum  is taken over all open sets such that $B\subset O$; see \cite[Proposition 4.2.1]{attouch}. Thus,
		\begin{equation}
		|\mathrm{D} v^*|_p(B)\leq \alpha (B)
		\end{equation}
	    follows from \eqref{eq:IneqO} by taking the infimum over $\{O \text{ open }: B\subset O\}$.  
			
		In order to prove that $v_n$ converges to $v^*$ in the sense of intermediate convergence, we are only left to prove that $|\mathrm{D}v_n|\rightharpoonup |\mathrm{D}v^*|$ narrowly in $\mathrm{M}^+(\Omega)$ (see  \cite[Proposition 10.1.2]{attouch}). The latter meaning that $\int_{\Omega}\varphi|\mathrm{D}v_n|\to \int_{\Omega} \varphi|\mathrm{D}v^*|$ for each continuous and bounded $\varphi$ on $\Omega$. Given that $\alpha\in \mathrm{M}^+(\Omega)$ we have that 
		 for each $\epsilon>0$ there exists a compact set $\Lambda_\epsilon\subset \Omega$ such that 
		\begin{equation*}
			\alpha(\Omega\setminus \Lambda_\epsilon)\leq \epsilon.
		\end{equation*}
		Since $v_n\in K$, then $|\mathrm{D} v_n|\leq \alpha$, and  hence for each $\epsilon>0$ the compact set $\Lambda_\epsilon\subset \Omega$, is such that
			\begin{equation*}
			|\mathrm{D} v_n|(\Omega\setminus \Lambda_\epsilon)\leq \epsilon, \qquad \text{ for all } n\in \mathbb{N}.
		\end{equation*}
		Then, by Prokhorov Theorem (see \cite[Theorem 4.2.3]{attouch}), there is a subsequence of $\{|\mathrm{D} v_n|\}$ (not relabelled) that $|\mathrm{D}v_n|\rightharpoonup |\mathrm{D}v^*|$ narrowly in $\mathrm{M}^+(\Omega)$. That is, along a subsequence, $v_n$ converges to $v^*$ in the sense of intermediate convergence. This implies that 
		\begin{equation*}
			v^*\in \BV_{\Gamma_D}(\Omega),
		\end{equation*}
		by virtue of Theorem \ref{thm:traceBV} and the fact that $v_n\in \BV_{\Gamma_D}(\Omega)$ for all $n\in \mathbb{N}$.	
	\end{proof}
	The above results particularly means that for a sequence $\{v_n\}$ in $K$ that is bounded in $\BV(\Omega)$, there exists a subsequence that converges to some $u^*\in \BV(\Omega)$ in the sense of intermediate convergence. Further, $u^*\in \BV_{\Gamma_D}(\Omega)$ and also $u^*\in K$.   A direct consequence of the above lemma is the following result.
	\begin{theorem}\label{thm:exis_original_measure}
		If $\alpha \in \mathrm{M}^+(\Omega)$, then there exists a unique solution  to \eqref{eq:original_problem_ini} in $\BV_{\Ga_D}(\Om)$.
			\end{theorem}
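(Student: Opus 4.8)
The plan is to run the direct method of the calculus of variations, with \Cref{lem:Pro} doing the heavy lifting: it both extracts a convergent subsequence from a minimizing sequence and guarantees that the homogeneous Dirichlet condition on $\Gamma_D$ survives in the limit. Throughout write $\mathcal{F}(u):=\tfrac12\|u\|_{L^2(\Omega)}^2-(f,u)$ for the objective functional of \eqref{eq:original_problem_ini} (set to $+\infty$ when $u\notin L^2(\Omega)$), and recall that here $K=\{v\in\BV_{\Gamma_D}(\Omega):|\mathrm{D}v|_p\leq\alpha\}$. First I would record that $0\in K$, so the feasible set is nonempty and $\inf_{u\in K}\mathcal{F}(u)\leq\mathcal{F}(0)=0$; and since $\mathcal{F}(u)\geq\tfrac12\|u\|_{L^2(\Omega)}^2-\|f\|_{L^2(\Omega)}\|u\|_{L^2(\Omega)}$, the functional is bounded below and coercive on $L^2(\Omega)$. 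Hence $m:=\inf_{u\in K}\mathcal{F}(u)$ is a finite real number.

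Next I would take a minimizing sequence $\{u_n\}\subset K$ with $\mathcal{F}(u_n)\to m$. The coercivity estimate forces $\{u_n\}$ to be bounded in $L^2(\Omega)$, hence---$\Omega$ being bounded---bounded in $L^1(\Omega)$, say by some $M$. Thus $\{u_n\}\subset K^*$ for that $M$, and \Cref{lem:Pro} provides a subsequence (not relabelled) and some $u^*\in\BV(\Omega)$ with $u_n\to u^*$ in the sense of intermediate convergence; by the same lemma $u^*\in\BV_{\Gamma_D}(\Omega)$ and $u^*\in K$, so $u^*$ is admissible for \eqref{eq:original_problem_ini}.

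It remains to verify $\mathcal{F}(u^*)\leq\liminf_n\mathcal{F}(u_n)$. Intermediate convergence gives $u_n\to u^*$ in $L^1(\Omega)$; together with the $L^2$-boundedness of $\{u_n\}$, a further subsequence satisfies $u_n\rightharpoonup u^*$ weakly in $L^2(\Omega)$ (testing against $L^\infty(\Omega)$ functions identifies the weak $L^2$-limit with the $L^1$-limit, and in particular $u^*\in L^2(\Omega)$). Then $(f,u_n)\to(f,u^*)$ while weak lower semicontinuity of the $L^2$-norm gives $\|u^*\|_{L^2(\Omega)}^2\leq\liminf_n\|u_n\|_{L^2(\Omega)}^2$; combining these yields $\mathcal{F}(u^*)\leq\liminf_n\mathcal{F}(u_n)=m$, so $\mathcal{F}(u^*)=m$ and $u^*$ solves \eqref{eq:original_problem_ini}. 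For uniqueness I would note that $K$ is convex---$\BV_{\Gamma_D}(\Omega)$ is a linear subspace and, by subadditivity of the total variation measure, $|\mathrm{D}(tv_1+(1-t)v_2)|_p\leq t|\mathrm{D}v_1|_p+(1-t)|\mathrm{D}v_2|_p\leq\alpha$ whenever $v_1,v_2\in K$ and $t\in[0,1]$---and that $u\mapsto\tfrac12\|u\|_{L^2(\Omega)}^2$ is strictly convex on $L^2(\Omega)$ with $u\mapsto-(f,u)$ linear, so $\mathcal{F}$ is strictly convex and admits at most one minimizer over the convex set $K$.

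I expect no serious obstacle here: all the delicate analysis---compactness in intermediate convergence and preservation of the trace on $\Gamma_D$---is already encapsulated in \Cref{lem:Pro}. The only point requiring a small argument is upgrading the intermediate (hence $L^1$) convergence of the minimizing sequence to weak $L^2$ convergence so that the quadratic term is weakly lower semicontinuous, and this is the elementary identification of limits just indicated.
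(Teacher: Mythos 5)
Your proposal is correct and follows essentially the same route as the paper: the direct method, with \Cref{lem:Pro} supplying compactness in the intermediate sense and preservation of both the constraint and the trace on $\Gamma_D$, followed by weak lower semicontinuity of the objective in $L^2(\Omega)$. The additional details you supply (identification of the weak $L^2$ limit with the $L^1$ limit, convexity of $K$, and strict convexity for uniqueness) are all correct and merely make explicit what the paper leaves implicit.
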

	\begin{proof}
		Consider an infimizing sequence $\{u_n\}$ for \eqref{eq:original_problem_ini}. It follows that $\{u_n\}$ is bounded in $L^2(\Omega)$ and {hence} Lemma \ref{lem:Pro} is applicable. That is, there is a subsequence of $\{u_n\}$ (not relabelled) such that $u_n\rightharpoonup u^*$ in $L^2(\Omega)$, and $u_n\to u^*$ in the sense of the intermediate convergence for $\BV(\Om)$, and further $u^*\in K$. Finally, by exploiting the weakly lower semicontinuity property of the objective functional in \eqref{eq:original_problem_ini}, we have that $u^*\in K$ is a minimizer.
	\end{proof}
	Next we discuss the case when $\alpha$ is a function.

	\subsection{The case when \texorpdfstring{$\alpha$}{alpha} is an integrable function} 
		 {In this section, we let $\alpha:\Omega\to\mathbb{R}$ be a nonnegative and integrable function, leading to
		 \begin{equation*}
		 	U_{\Gamma_D}(\Omega)=W^{1,1}_{\Gamma_D}(\Om).
		 \end{equation*} 
		This case can be interpreted (to some extent) as a special case of the one in the previous subsection under the assumption that $\alpha$ is a measure absolutely continuous with respect to the Lebesgue measure.} 
		However, we proceed in a slightly different fashion by considering $\alpha$ as a function and the state space contained in $W^{1,1}(\Omega)$; this provides further insight  on bounded sequences in $K$ and in Sobolev spaces.
		In this case, we have $K$  given by
	\begin{equation*}
	K= \{v\in W^{1,1}_{\Gamma_D}(\Omega) : |\nabla v|_{p}\leq\alpha \: \text{ a.e.}\}.
\end{equation*}
	{Next we state a version of \Cref{lem:Pro} adapted to the current setting which} can be used {to prove} existence of solutions  {to} \eqref{eq:original_problem_ini}.   
	
	\begin{lemma}\label{lem:Pro2}
		Let $\alpha\in L^1(\Omega)^+$ and $M>0$, then every sequence $\{v_n\}$  in the  set
		\begin{equation*}
			K^*=K\cap \{v\in L^1(\Omega): \|v\|_{L^1(\Omega)}\leq M\}
		\end{equation*} 
		admits a subsequence satisfying  
			\begin{equation*}
			v_n\to v^* \text{ in } L^1(\Omega), \qquad \text{ and } \qquad  \int_{\Omega}|\nabla v_n(x)|_p\dif x\to \int_{\Omega} |\nabla v^*(x)|_p\dif x,
		\end{equation*}
		for some $v^*\in K^*$, which is also the weak limit in $ W^{1,1}_{\Gamma_D}(\Omega)$ of the same subsequence.
	\end{lemma}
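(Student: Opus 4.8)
The plan is to mirror the structure of the proof of \Cref{lem:Pro}, but with the key observation that here the constraint $|\nabla v_n|_p \le \alpha$ a.e., with a \emph{fixed} $L^1$ function $\alpha$, provides \emph{equi-integrability} of the gradients rather than mere boundedness of the total variations. First I would take a sequence $\{v_n\}$ in $K^*$. Since $\int_\Omega |\nabla v_n|_p \dx \le \int_\Omega \alpha\, \dx < \infty$ and $\|v_n\|_{L^1(\Omega)} \le M$, the sequence is bounded in $W^{1,1}(\Omega)$, hence bounded in $\BV(\Omega)$. By the compact embedding $\BV(\Omega) \hookrightarrow L^1(\Omega)$ we extract a (not relabelled) subsequence with $v_n \to v^*$ in $L^1(\Omega)$ for some $v^* \in \BV(\Omega)$, and simultaneously $\mathrm{D}v_n \rightharpoonup \mathrm{D}v^*$ in $\mathrm{M}(\Omega)^{\mathrm d}$.

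The crucial step is to upgrade $v^*$ from a $\BV$ function to a $W^{1,1}$ function and to establish the convergence of the total variations. For this I would invoke the Dunford--Pettis theorem: the pointwise bound $|\nabla v_n|_p \le \alpha \in L^1(\Omega)$ gives domination by a single integrable function, so $\{\nabla v_n\}$ is equi-integrable (and trivially uniformly bounded in $L^1$), hence relatively weakly compact in $L^1(\Omega)^{\mathrm d}$. Passing to a further subsequence, $\nabla v_n \rightharpoonup g$ weakly in $L^1(\Omega)^{\mathrm d}$ for some $g \in L^1(\Omega)^{\mathrm d}$. Combined with $v_n \to v^*$ in $L^1(\Omega)$, a standard identification of the distributional limit forces $g = \nabla v^*$, so in fact $v^* \in W^{1,1}(\Omega)$ and $v_n \rightharpoonup v^*$ weakly in $W^{1,1}(\Omega)$. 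The pointwise constraint then passes to the limit by Mazur's lemma (convex combinations of $\nabla v_n$ converge strongly in $L^1$, hence a.e. along a subsequence, and the set $\{|\,\cdot\,|_p \le \alpha(x)\}$ is convex and closed), yielding $|\nabla v^*|_p \le \alpha$ a.e., i.e.\ $v^* \in K$. Since also $\|v^*\|_{L^1} \le M$, we get $v^* \in K^*$.

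It remains to prove the \emph{intermediate convergence} $\int_\Omega |\nabla v_n|_p \dx \to \int_\Omega |\nabla v^*|_p \dx$, which (unlike in the general $\BV$ case) does \emph{not} follow from weak $W^{1,1}$ convergence alone, and this is the step I expect to be the main obstacle. Lower semicontinuity of $v \mapsto \int_\Omega |\nabla v|_p \dx$ under weak $W^{1,1}$ convergence gives $\int_\Omega |\nabla v^*|_p \dx \le \liminf_n \int_\Omega |\nabla v_n|_p \dx$ immediately. For the reverse inequality I would use the domination $|\nabla v_n|_p \le \alpha$ once more: since $|\nabla v_n|_p \rightharpoonup h$ for some $h \le \alpha$ a.e. (again via Dunford--Pettis, extracting a weak $L^1$ limit of the scalar sequence $|\nabla v_n|_p$) and, by lower semicontinuity of the norm together with the weak convergence $\nabla v_n \rightharpoonup \nabla v^*$, one has $|\nabla v^*|_p \le h$ a.e.; hence $\limsup_n \int_\Omega |\nabla v_n|_p \dx = \int_\Omega h \dx \ge \int_\Omega |\nabla v^*|_p \dx$. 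Alternatively — and perhaps more cleanly — one observes that the dominating function $\alpha$ together with weak $L^1$ convergence allows a direct Vitali-type argument. Chaining the two inequalities gives the desired convergence of total variations, which together with $v_n \to v^*$ in $L^1(\Omega)$ is precisely intermediate convergence; finally Theorem \ref{thm:traceBV} ensures $\gamma_0(v^*) = 0$ on $\Gamma_D$, so $v^* \in K^* \subset W^{1,1}_{\Gamma_D}(\Omega)$, completing the proof.
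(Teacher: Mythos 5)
Your setup---equi-integrability of the gradients from the domination $|\nabla v_n|_p\le\alpha\in L^1(\Omega)$, Dunford--Pettis to extract $\nabla v_n\rightharpoonup\nabla v^*$ weakly in $L^1(\Omega)^{\mathrm d}$, identification of the weak limit, and Mazur's lemma for the pointwise constraint---is exactly the route the paper sketches, and that part is sound. The genuine gap is in your final step, precisely where you anticipated the obstacle: the two inequalities you derive, namely $\int_\Omega|\nabla v^*|_p\dif x\le\liminf_n\int_\Omega|\nabla v_n|_p\dif x$ (lower semicontinuity) and $\limsup_n\int_\Omega|\nabla v_n|_p\dif x=\int_\Omega h\dif x\ge\int_\Omega|\nabla v^*|_p\dif x$, point in the \emph{same} direction, so they do not chain to give equality. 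What you would need is $\limsup_n\int_\Omega|\nabla v_n|_p\dif x\le\int_\Omega|\nabla v^*|_p\dif x$, i.e.\ $h=|\nabla v^*|_p$ a.e., and this can genuinely fail because of oscillation: take $\Omega=(0,1)$, $\Gamma_D=\{0\}$, $\alpha\equiv 1$ and $v_n(x)=n^{-1}\sin(nx)$. Then $v_n\in K^*$, $v_n\to 0$ in $L^1(\Omega)$, yet $\int_0^1|v_n'(x)|\dif x\to 2/\pi\neq 0$ along \emph{every} subsequence; here $h\equiv 2/\pi$ while $|\nabla v^*|\equiv 0$. So the convergence of the total variations cannot be deduced from the stated hypotheses, and neither your weak-limit argument nor the Vitali-type alternative you mention (which would require a.e.\ convergence of the gradients, which you do not have) can repair it.

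For what it is worth, the paper's own justification does not close this gap either: it defers to ``Prokhorov's result as in the proof of \Cref{lem:Pro}'', but Prokhorov only produces a narrowly convergent subsequence of $\{|\nabla v_n|_p\dif x\}$ towards \emph{some} nonnegative measure, not towards $|\nabla v^*|_p\dif x$; in the example above the narrow limit is $(2/\pi)\dif x$. What survives of your argument---strong $L^1$ convergence of $v_n$, weak $W^{1,1}$ convergence to some $v^*$ with $|\nabla v^*|_p\le\alpha$ a.e.---is all that the hypotheses actually give. Note also that the trace assertion $\gamma_0(v^*)=0$ on $\Gamma_D$ then requires a separate argument, since \Cref{thm:traceBV} is keyed to the intermediate convergence that has not been established.
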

	The above can be seen as a consequence of equi-integrability of the set $K$. {Recall that a family of functions $\mathcal{F}\subset L^1(\Omega)$ is \emph{equi-integrable} provided that for every $\epsilon>0$, there exists a $\delta>0$ such that for every set $A\subset \Omega$ with $|A|<\delta$ we have that $\int_{A}|u|\dif x<\epsilon$ for all $u\in\mathcal{F}.$ Further, the Dunford-Pettis theorem states that if $\{u_n\}$ is a bounded sequence in $L^1(\Omega)$ and is equi-integrable, then $u_n\rightharpoonup u$ along a subsequence for some $u\in L^1(\Omega)$. Hence, since} $K$ is bounded in $W^{1,1}(\Omega)$, and the gradients are equi-integrable, it is simple to infer strong convergence in $L^1(\Omega)$ together with weak convergence of the gradients in $L^1(\Omega)$. {The improvement of the latter convergence} is done again via Prokhorov's result as in the proof of Lemma \ref{lem:Pro} leading to an equivalent of the intermediate convergence in $\BV(\Omega)$. The trace preservation follows directly from the same proof. Further note that the convergence determined does not imply strong convergence in $W^{1,1}(\Omega)$ since this  space is not uniformly convex. Another formulation of the above lemma is that bounded sets with equi-integrable gradients are compact in $W_{\Gamma_D}^{1,1}(\Omega)$ when endowed with the metric
	\begin{equation*}
		d(v,u):=\|u-v\|_{L^1(\Omega)}+\left|\int_{\Omega} |\nabla u(x)|_p\dif x - \int_{\Omega}|\nabla v(x)|_p\dif x\right|.
	\end{equation*} 
	With the use of \Cref{lem:Pro2} and following the same argument as before for \Cref{thm:exis_original_measure}, we have
	\begin{theorem}\label{thm:exis_original_fun}
		If $\alpha \in L^1(\Omega)^+$, then there exists a unique solution  to \eqref{eq:original_problem_ini} in $W^{1,1}_{\Ga_D}(\Om)$.
	\end{theorem}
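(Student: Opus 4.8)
The plan is to run the direct method of the calculus of variations, following verbatim the scheme of the proof of \Cref{thm:exis_original_measure} but with \Cref{lem:Pro2} replacing \Cref{lem:Pro}. First I would record that the feasible set $K$ is nonempty: since $\alpha\geq 0$ a.e., the zero function belongs to $K\subset W^{1,1}_{\Gamma_D}(\Omega)$. Writing $\mathcal{J}(u):=\frac12\|u\|_{L^2(\Omega)}^2-(f,u)$ for the objective in \eqref{eq:original_problem_ini}, the Cauchy--Schwarz and Young inequalities give $\mathcal{J}(u)\geq \frac12\|u\|_{L^2(\Omega)}^2-\|f\|_{L^2(\Omega)}\|u\|_{L^2(\Omega)}\geq -\tfrac12\|f\|_{L^2(\Omega)}^2$, so the infimum $m:=\inf_{u\in K}\mathcal{J}(u)$ is finite, and any infimizing sequence $\{u_n\}\subset K$ is bounded in $L^2(\Omega)$, hence---$\Omega$ being bounded---also bounded in $L^1(\Omega)$, say $\|u_n\|_{L^1(\Omega)}\leq M$.

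Thus $\{u_n\}$ lies in the set $K^*$ of \Cref{lem:Pro2} for this $M$, and that lemma produces a subsequence (not relabeled) and a limit $u^*\in K^*\subset K$ with $u_n\to u^*$ strongly in $L^1(\Omega)$, $\int_\Omega|\nabla u_n|_p\dif x\to\int_\Omega|\nabla u^*|_p\dif x$, and $u_n\rightharpoonup u^*$ weakly in $W^{1,1}_{\Gamma_D}(\Omega)$; in particular $u^*$ carries the homogeneous boundary condition on $\Gamma_D$ and the gradient constraint. Since $\{u_n\}$ is bounded in the Hilbert space $L^2(\Omega)$, after passing to a further subsequence it converges weakly in $L^2(\Omega)$ to some $w$; testing against $C^\infty_0(\Omega)$ and using the strong $L^1$ convergence, $w=u^*$ a.e., so $u^*\in L^2(\Omega)$ and $u_n\rightharpoonup u^*$ in $L^2(\Omega)$.

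Finally, $\mathcal{J}$ is convex and continuous on $L^2(\Omega)$, hence weakly sequentially lower semicontinuous, so $\mathcal{J}(u^*)\leq\liminf_{n\to\infty}\mathcal{J}(u_n)=m$; as $u^*\in K$ this yields $\mathcal{J}(u^*)=m$, i.e., $u^*$ solves \eqref{eq:original_problem_ini} in $W^{1,1}_{\Gamma_D}(\Omega)$. For uniqueness, I would use that $u\mapsto\frac12\|u\|_{L^2(\Omega)}^2$ is strictly convex and $K$ is convex: two distinct minimizers $u_1\neq u_2$ would give $\tfrac12(u_1+u_2)\in K$ with $\mathcal{J}\big(\tfrac12(u_1+u_2)\big)<m$, a contradiction.

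I do not expect any real obstacle at the level of this argument; the essential difficulty has already been isolated and dealt with inside \Cref{lem:Pro2}. Namely, a bounded sequence in $W^{1,1}_{\Gamma_D}(\Omega)$ need not possess a subsequence whose $L^1$-limit remains in $W^{1,1}_{\Gamma_D}(\Omega)$, because the trace is not continuous under mere weak $W^{1,1}$ (equivalently $L^1$) convergence---this is precisely the phenomenon of the sample sequence in \Cref{Sec:preliminaries}. What rescues the situation is that the constraint $|\nabla v|_p\leq\alpha$ with $\alpha\in L^1(\Omega)^+$ forces the gradients of elements of $K$ to be equi-integrable, so Dunford-Pettis gives weak $L^1$ compactness of the gradients and Prokhorov's theorem upgrades this to convergence of the total masses $\int_\Omega|\nabla u_n|_p\dif x$, i.e., intermediate-type convergence, which is exactly the hypothesis \Cref{thm:traceBV} requires in order to pass the boundary condition to the limit. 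Everything else is the standard direct method.
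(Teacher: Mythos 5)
Your proposal is correct and follows exactly the route the paper takes: invoke \Cref{lem:Pro2} in place of \Cref{lem:Pro} and repeat the direct-method argument of \Cref{thm:exis_original_measure}, with weak lower semicontinuity of the quadratic objective in $L^2(\Omega)$ and strict convexity for uniqueness. The extra details you supply (identification of the weak $L^2$ limit with the $L^1$ limit, finiteness of the infimum) are correct and merely make explicit what the paper leaves implicit.
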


		\section{Existence Theory for the pre-dual problem \texorpdfstring{$(\mathbb{P}^*)$}{P*}}\label{Sec:existence_dual}

The focus of this section is on existence and uniqueness of solutions of problem \eqref{eq:predual_problem_ini} {under different functional analytic} settings. In particular, we focus on {two cases where $\bm p$ is either  {(i)} a function or}  {(ii)} a Borel measure. In the first case, we {let} $\alpha$ be {either a} function or a measure; {here, existence results are limited to $\mathrm{d}=1$. On the other hand, in the second case we establish an existence and uniqueness result for $\bm p$ with arbitrary $\mathrm{d} \in \mathbb{N}$, for} a {specific class of $\alpha$'s (to be specified later)}. {Furthermore}, this second case requires a nonstandard space of vector measures with divergences in {$L^2(\Omega)$. Remarkably, a version of the integration-by-parts formula still holds in this general setting}; such a construct is rather recent \cite{vsilhavy2008divergence}. We start with the case when $\bm p$ is a function.

\subsection{The case when \texorpdfstring{${\bm p}$}{p} is a function and \texorpdfstring{$\alpha$}{alpha} is either a function or a measure}\label{sec:apfunc}
We begin this section by considering that $\alpha\in L^1(\Omega)^+$  and $J$ {is} defined as
\begin{equation}\label{eq:apfunctions}
J(\bm p)=\int_\Omega \alpha(x) |\bm p(x)|_q \dif x.	
\end{equation}
Moreover, we define  
	\begin{equation*}
	\|\bm{p}\|_{\alpha,2}:=\int_\Omega \alpha(x)|\bm{p}(x)|_{q} \dif x+ \|\div\: \bm{p}\|_{L^2(\Omega)},
	\end{equation*}
for $\bm p \in C^\infty(\overline{\Omega})^\mathrm{d}$. 

We assume that if $\mathrm{d}=1$ and $\Gamma_N=\emptyset$ then $\alpha$ is not identically zero, and if $\mathrm{d}>1$ then $\alpha>0$ a.e. in $\Omega$. Thus, the space $V_{\Gamma_{N}}(\Omega)$ is defined  by 
	\begin{equation}\label{eq:V_gamma_N_defn}
	V_{\Gamma_{N}}(\Omega) := \overline{E(\Omega)}^{\|\cdot\|_{\alpha,2}},
	\end{equation}
	where 
		\begin{equation*}
	E(\Omega) := \{ \bm p\in C^{\infty}(\overline{\Omega})^\mathrm{d} \::\:  \overline{\supp\, (\bm p)}\cap \Gamma_N =\emptyset \}.
	\end{equation*}
	It follows that	$V_{\Gamma_{N}}(\Omega)$ is a Banach space: If $\mathrm{d}>1$, the result is clear given that $\alpha>0$ a.e. in $\Omega$. If $\mathrm{d}=1$, then $V_{\Gamma_{N}}(\Omega)=H^1_{\Gamma_N}(\Omega)$ which follows  from the fact that $J(\bm p)+\frac{1}{2}\int_{\Omega}{|\bm p'(x)|^2}\dif x $ is an equivalent norm (to the usual one) on $H^1_{\Gamma_N}(\Omega)$. The latter is due to $J(\bm p)=\int_{\Omega} \alpha(x)|\bm p(x)|\dif x$ being a seminorm in $H^1_{\Gamma_N}(\Omega)$ and norm on the constants, i.e. for $a\in \mathbb{R}$, $J(a)={| a |}\alpha (\Omega)=0$ iff $a=0$; see \cite[Chapter 1.4]{temam1997infinite}.  We can now establish existence of a solution to problem \cref{eq:predual_problem_ini}.

	\begin{theorem}\label{thm:docexist}
		Let $\mathrm{d}=1$, $\alpha\in L^1(\Omega)^+$, and if $\Gamma_N=\emptyset$ then suppose that $\alpha$ is not identically zero. Consider $J$  {as defined  in} \eqref{eq:apfunctions} on $V_{\Gamma_{N}}(\Omega)$ as  {in} \eqref{eq:V_gamma_N_defn}. Then, there exists a unique solution to \eqref{eq:predual_problem_ini}. 
	\end{theorem}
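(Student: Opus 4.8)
The plan is to solve \eqref{eq:predual_problem_ini} by the direct method of the calculus of variations. Since $\mathrm d=1$ we have $\div\bm p=\bm p'$ and $|\bm p|_q=|\bm p|$, and, as established just before the statement, $V_{\Gamma_{N}}(\Omega)=H^1_{\Gamma_N}(\Omega)$; hence \eqref{eq:predual_problem_ini} amounts to minimising
$F(\bm p):=\tfrac12\|\bm p'-f\|_{L^2(\Omega)}^2+\int_\Omega\alpha(x)|\bm p(x)|\dx$
over $\bm p\in H^1_{\Gamma_N}(\Omega)$. Because $F\ge 0$, the value $m:=\inf F$ is finite and nonnegative, and I would fix a minimising sequence $\{\bm p_n\}$ with $F(\bm p_n)\le m+1$ for all $n$.

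The first step is coercivity, i.e.\ boundedness of $\{\bm p_n\}$ in $H^1(\Omega)$. From $F(\bm p_n)\le m+1$ and the nonnegativity of both terms one gets $\|\bm p_n'\|_{L^2(\Omega)}\le C$ and $J(\bm p_n)=\int_\Omega\alpha|\bm p_n|\dx\le m+1$. It remains to bound $\|\bm p_n\|_{L^2(\Omega)}$. If $\Gamma_N\neq\emptyset$, functions in $H^1_{\Gamma_N}(\Omega)$ have a vanishing boundary value and a Poincar\'{e} inequality closes the estimate. If $\Gamma_N=\emptyset$, I would split $\bm p_n=\bar{\bm p}_n+(\bm p_n-\bar{\bm p}_n)$ into its mean and mean-zero parts: Poincar\'{e}--Wirtinger bounds the latter, and since $H^1(\Omega)\hookrightarrow L^\infty(\Omega)$ in one dimension, $\|\bm p_n-\bar{\bm p}_n\|_{L^\infty(\Omega)}\le C\|\bm p_n'\|_{L^2(\Omega)}$, whence $|\bar{\bm p}_n|\,\alpha(\Omega)\le J(\bm p_n)+\|\alpha\|_{L^1(\Omega)}\|\bm p_n-\bar{\bm p}_n\|_{L^\infty(\Omega)}\le C$; the means are then bounded because $\alpha(\Omega)=\|\alpha\|_{L^1(\Omega)}>0$ --- this is precisely where the hypothesis that $\alpha$ is not identically zero enters. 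In either case $\{\bm p_n\}$ is bounded in $H^1(\Omega)$.

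Next I would extract a subsequence with $\bm p_n\rightharpoonup\bm p^*$ in $H^1(\Omega)$; since $H^1_{\Gamma_N}(\Omega)$ is closed and convex, hence weakly closed, $\bm p^*\in H^1_{\Gamma_N}(\Omega)$. The compact embedding $H^1(\Omega)\hookrightarrow C(\overline\Omega)$ (available because $\mathrm d=1$) gives $\bm p_n\to\bm p^*$ uniformly, so $|J(\bm p_n)-J(\bm p^*)|\le\|\alpha\|_{L^1(\Omega)}\|\bm p_n-\bm p^*\|_{L^\infty(\Omega)}\to 0$; and $\bm p\mapsto\tfrac12\|\bm p'-f\|_{L^2(\Omega)}^2$, being convex and strongly continuous on $H^1(\Omega)$, is weakly lower semicontinuous. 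Combining, $F(\bm p^*)\le\liminf_n F(\bm p_n)=m$, so $\bm p^*$ solves \eqref{eq:predual_problem_ini}.

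For uniqueness I would use convexity of $F$ (a sum of convex functionals): if $\bm p_1,\bm p_2$ are both minimisers, so is their midpoint, which forces equality in the convexity inequality for each summand. Equality for the quadratic term (strict convexity of $\|\cdot\|_{L^2(\Omega)}^2$) forces $\bm p_1'=\bm p_2'$ a.e., hence $\bm p_1=\bm p_2+c$ for a constant $c$, as $\Omega$ is an interval. If $\Gamma_N\neq\emptyset$, the common zero trace on $\Gamma_N$ gives $c=0$ at once. The case $\Gamma_N=\emptyset$ is the delicate one, and the main obstacle I anticipate: the quadratic term pins down $\bm p$ only modulo an additive constant, so this degeneracy must be removed by combining equality in the convexity of $J$, the continuity of one-dimensional candidates, and the fact that $J$ restricts to a norm on the constants ($J(a)=|a|\,\alpha(\Omega)>0$ for $a\neq0$, again invoking $\alpha\not\equiv0$) to force $c=0$; this requires genuine care beyond a bare strict-convexity statement.
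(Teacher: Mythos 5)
Your existence argument is correct and takes essentially the paper's route: direct method, boundedness of the infimizing sequence in $H^1_{\Gamma_N}(\Omega)$, weak compactness, the compact embedding $H^1(\Omega)\hookrightarrow C(\overline\Omega)$ to pass to the limit in $J$, and weak lower semicontinuity of the quadratic term. The only real difference is that the paper obtains coercivity by citing (from Temam) that $\|\bm p'\|_{L^2(\Omega)}+\int_\Omega\alpha|\bm p|\dx$ is an equivalent norm on $H^1_{\Gamma_N}(\Omega)$, whereas you prove this by hand via the mean/mean-zero splitting; both are fine, and your version makes explicit where $\alpha\not\equiv 0$ enters.

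The gap is exactly where you suspect it: uniqueness when $\Gamma_N=\emptyset$. The paper disposes of uniqueness in one sentence (``strict convexity of the objective functional''), which is precisely the bare statement you rightly distrust: the functional is not strictly convex in the constant direction, since the quadratic term only sees $\bm p'$ and $J$ is positively homogeneous. Your proposed repair --- equality in the convexity of $J$ along the segment plus ``$J$ is a norm on constants'' --- cannot be completed at this level of generality, because equality of $J(\bm p)$ and $J(\bm p+c)$ together with affineness of $t\mapsto\int_\Omega\alpha|\bm p+tc|\dx$ does \emph{not} force $c=0$: it only requires that $\{x:\bm p(x)\in(-c,0)\}$ be $\alpha$-null and that the signs of $\bm p$ balance against $\alpha$. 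Indeed uniqueness can fail under the stated hypotheses. Take $\Omega=(0,1)$, $\Gamma_N=\emptyset$, $f\equiv -1$, $\alpha=\chi_{(0,1/4)}+\chi_{(3/4,1)}$. Put $s=1$ on $(0,1/4)$, $s=-1$ on $(3/4,1)$, $w(x)=\int_0^x\alpha s\,\dif t$ (so $w(0)=w(1)=0$), and let $\bm p_a$ be the primitive of $f+w$ with $\bm p_a(0)=a$. For every $a\in(7/32,\,19/32)$ one checks $\bm p_a>0$ on $[0,1/4]$ and $\bm p_a<0$ on $[3/4,1]$, so $s\in\mathrm{sgn}(\bm p_a)$ on $\supp\,\alpha$, and integration by parts gives
\begin{equation*}
(\bm p_a'-f,\bm v')+\int_\Omega\alpha\, s\,\bm v\dx=(w,\bm v')+\int_\Omega\alpha\, s\,\bm v\dx=0\qquad\text{for all }\bm v\in H^1(\Omega),
\end{equation*}
i.e.\ $0\in\partial\mathcal J(\bm p_a)$; by convexity each $\bm p_a$ is a global minimizer, and they differ by nonzero constants. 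So for $\Gamma_N=\emptyset$ what is actually provable is uniqueness of $\bm p'$ (hence of $\div\bm p$ and of the dual variable $u=f-\div\bm p$), not of $\bm p$; uniqueness of $\bm p$ itself needs an extra hypothesis such as $\alpha>0$ a.e., in which case the argument you sketch does close (continuity of $\bm p$ and connectedness of $\Omega$ force $\bm p\geq 0$ or $\bm p\leq -c$ throughout, and then $J(\bm p+c)-J(\bm p)=\pm c\|\alpha\|_{L^1(\Omega)}\neq0$).
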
 
	\begin{proof}
		The proof is based on the direct method.
		{Let $\mathcal{J}\colon V_{\Gamma_{N}}(\Omega)\to \RR$ be} the objective function in \cref{eq:predual_problem_ini}, that is,
		\[ \mathcal{J}(\bm p) := \frac12\int_\Omega|\bm p'(x) - f(x)|^2 \dif x + J(\bm p),  \]
		 {and let} $\{\bm p_n\}_{n=1}^\infty$ in $V_{\Gamma_{N}}(\Omega)$ be an infimizing sequence of $\mathcal{J}$.
		Note that $\frac{1}{2}\int_{\Omega}{|\bm p'(x)|^2}\dif x+\int_{\Omega}\alpha |\bm p(x)|\dif x$ is a norm in $H^1_{\Gamma_N}(\Omega)$; see \cite[Chapter 1.4]{temam1997infinite}. Hence, $\{\bm p_n\}_{n=1}^\infty$ is bounded in $V_{\Gamma_{N}}(\Omega)$, and  there exists a weakly convergent (not relabeled) subsequence  $\{\bm p_{n}\}_{n=1}^\infty$ such that  
		$\bm p_{n} \rightharpoonup \bar {\bm p}$ in $H^1_{\Gamma_N}(\Omega)$. By the compact embedding of $H^1_{\Gamma_N}(\Omega)\hookrightarrow C(\overline{\Omega})$ (see \cite[Chapter 6]{adams_fournier_2003}) we have existence of a subsequence (not relabeled) $ \bm p_{n} \to \bar{\bm p}$  in $C(\overline{\Omega})$.
				Finally, weak lower semicontinuity of $\mathcal{J}(\bm p)$ yields that $\bar {\bm p}\in V_{\Gamma_{N}}(\Omega)$ is a solution to \cref{eq:predual_problem_ini}.	The strict convexity of the objective functional provides uniqueness to the solution.	
			\end{proof}	

An analogous approach can be considered when  $\alpha$ is a non negative Borel measure (and not identically zero), that is, when $\alpha\in \mathrm{M}^+(\Omega)$. In particular, we set
	\begin{equation}\label{eq:Jda}
		J({\bm p}) = \int_\Omega |\bm p|_q \dif \alpha,
	\end{equation}
and we construct the space $V_{\Gamma_{N}}(\Omega)$  in the same  {way as} in \cref{eq:V_gamma_N_defn}, but  {with the norm} {$\|\cdot\|_{\alpha,2}$ defined as}
	\begin{equation*}
	\|\bm{p}\|_{\alpha,2}:=\int_\Omega |\bm{p}|_{q} \dif \alpha+ \|\div\: \bm{p}\|_{L^2(\Omega)},
	\end{equation*}
	and assuming that if $\mathrm{d}=1$ and $\Gamma_N=\emptyset$ then $\alpha$ is not identically zero, and if $\mathrm{d}>1$ then $\alpha(B)>0$ if $|B|>0$ and $B\subset\Omega$ is a Borel set.

The existence result of Theorem \ref{thm:docexist} follows mutatis mutandis: Since $\frac{1}{2}\int_{\Omega}{|\bm p'(x)|^2}\dif x+\int_{\Omega}|{\bm p}| \dif \alpha$ is again a norm in $H^1_{\Gamma_N}(\Omega)$, see \cite[Chapter 1.4]{temam1997infinite}, the exact argument is applicable in this case.

We can now focus on the case when $\bm p$ is a measure which provides a general setting for the problem of interest in terms of existence, uniqueness, and duality results.

		\subsection{The case when \texorpdfstring{$\alpha$}{alpha} is a function and \texorpdfstring{${\bm p}$}{p} is a measure} We focus now on problem \eqref{eq:predual_problem_ini} when $J$ is defined as
		\begin{equation}\label{eq:Jdq}
			J(\bm p)=\int_\Omega \alpha \dif|\bm p|_q,
		\end{equation}
		and $\bm p$ is a Borel measure.  Notice that the above functional can be seen as a generalization of the functional in \eqref{eq:apfunctions}.  {The latter  can be obtained by letting $\bm p$ be} absolutely continuous with respect to the Lebesgue measure.

		 The functional analytic setting in this section, requires $\bm p$ to be a measure with  {divergence of $\bm p$ in $L^2(\Omega)$,} and $\alpha$ to be measurable with respect to $|\bm p|_q$. We start with a proper definition of such spaces and their properties. We disregard the possible ``boundary conditions'' for the variable ${\bm p}$, so that $\Gamma_N=\emptyset$ and we define $V_{\Gamma_{N}}(\Omega)$ as follows:
\begin{equation}\label{eq:Vacont}
	V_{\Gamma_{N}}(\Omega):=W(\Omega)=\{\bm p\in \mathrm{M}(\Omega)^\mathrm{d}: \div\:\bm p \in L^2(\Omega)\},	
\end{equation}  
where $\mathrm{M}(\Omega)^\mathrm{d}$ corresponds to the $\mathbb{R}^\mathrm{d}$-valued Borel measures in $\Omega\subset \mathbb{R}^\mathrm{d}$. Specifically, $\bm p\in W(\Omega)$ if there exists $h\in L^2(\Omega)$ such that
 \begin{equation}\label{eq:IntPart}
 	\int_{\Omega} \nabla \varphi \cdot \mathrm{d}\bm p=-\int_\Omega\varphi h \dx, \qquad  \forall \varphi\in C_c^\infty(\Omega),
 \end{equation}
and we define $\div\:\bm p:= h$. The space $W(\Omega)$ is a Banach space when endowed with the norm 
\begin{equation}\label{eq:normW}
	\|\bm w\|_{W(\Omega)}:= |\bm w|_q(\Omega)    +\|\div\: \bm w\|_{L^2(\Omega)},     
\end{equation} 
where $q\in [1,+\infty]$ and
\begin{equation*}
	|\bm w|_q(\Omega):=\sup \left\{ \langle \bm w, \bm v\rangle : \bm v \in C_c(\Omega)^\mathrm{d} \: \text{ with } \: |\bm v(x)|_{p}\leq 1  \quad \forall x\in \Omega\right\}.
\end{equation*}
Note that above $\langle \cdot, \cdot \rangle$ is the duality pairing between $\mathrm{M}(\Omega)^\mathrm{d}$ and $C_c(\Omega)^\mathrm{d}$, and hence
\begin{equation*}
	\langle \bm w, \bm v\rangle=\int_\Omega {\bm v} \cdot \dif\bm w = {\sum _{i=1}^d} \int_\Omega {v_i}  \dif w_i.
\end{equation*}
{Similarly to the definition of $|\bm w|_q(\Omega)$, we can define $|\bm w|_q(A)$ for any open set $A$, and subsequently for an arbitrary} Borel set $A$. {Hence}, $|\bm w|_q$ induces a nonnegative measure (the \emph{total variation measure of }$\bm w$); in addition $|\bm w|_q(\Omega)=\int_\Omega \dif|\bm w|_q$. Note that the space $W(\Omega)$ contains regular maps, clearly if $\bm p \in C_c^1(\Omega)^\mathrm{d}$ then $\bm p \in W(\Omega)$, in this case ``$\dif |\bm p|_q= |{\bm p}|_q \dx$'' where $\dx$ is the Lebesgue measure.

A note on the space $W(\Omega)$  {is in order}. Although one may be inclined to think that vector fields whose divergences are in $L^2(\Omega)$ would always have better regularity than just the measure type, this is not true. We consider an example developed by \v{S}ilhav\'{y} \cite{vsilhavy2008divergence} to show otherwise. Let $u\in \BV(\Omega)$ with $\Omega\subset \mathbb{R}^2$, and define $\bm p = (D u)^{\perp}$ with $(a_1,a_2)^\perp=(a_2,-a_1)$ with $Du$ the distributional (measure valued) gradient of $u$; it follows that $\div\:\bm p=0$. This can be seen as  {follows:}   $C^\infty(\overline{\Omega})$ is dense (in the sense of the intermediate convergence) in $\BV(\Omega)$, this means in particular that $\lim \int_{\Omega} \nabla \varphi \cdot \bm p_n \dx=\int_{\Omega} \nabla \varphi \cdot \mathrm{d}\bm p$ for such a smooth sequence defined as $\bm p_n=(D u_n)^{\perp}$ with $u_n\in C^\infty(\overline{\Omega})$. Since also $\int_{\Omega} \nabla \varphi \cdot \bm p_n \dx=0$, the result follows by taking the limit and from \eqref{eq:IntPart}.

Following \v{S}ilhav\'{y}~\cite{vsilhavy2008divergence}, we have a form of  {integration-by-parts formula} together with a trace result. We denote by $\mathrm{Lip}^{B}(\Lambda)$ the space of Lipschitz maps $h:\Lambda\to\mathbb{R}$ for $\Lambda\subset \mathbb{R}^k$ and endow it with the norm
	\begin{equation*}
		\|h\|_{\mathrm{Lip}^{B}(\Lambda)}:= \mathrm{Lip}(h)+\sup_{x\in \Lambda}|h(x)|,
	\end{equation*}
where $\mathrm{Lip}(h)$ is the Lipschitz constant of $h$ on $\Lambda$. It follows that  for each $\bm p\in W$ there exists a linear functional $\mathrm{N}_{\bm p}:\mathrm{Lip}^{B}(\partial\Omega)\to \mathbb{R}$ such that for all $v\in \mathrm{Lip}^{B}(\overline{\Omega})$ we have
\begin{equation}\label{eq:IntParts}
	\mathrm{N}_{\bm p}(v|_{\partial \Omega})=\int_{\Omega} \nabla v \cdot \dif\bm p+\int_\Omega v\: \div\:\bm p \dx.
\end{equation}
Further, $\mathrm{N}_{\bm p}$ is bounded in the following sense
\begin{equation*}
|	\mathrm{N}_{\bm p}(g)|\leq \left(|\bm p|_q(\Omega) +|\div\: \bm p|(\Omega)\right) \|g\|_{\mathrm{Lip}^{B}(\partial\Omega)}\leq C \|\bm p\|_V \|g\|_{\mathrm{Lip}^{B}(\partial\Omega)},
\end{equation*}
for some $C>0$, and all $\bm p \in W$ and all $g\in \mathrm{Lip}^{B}(\partial\Omega)$. Provided that $\bm p$ and $v$ have enough differential regularity, we observe 
\begin{equation*}
	\mathrm{N}_{\bm p}(v|_{\partial \Omega})=\int_{{ \partial\Omega}} v\; \bm p \cdot \nu  \:\text{d}\,\mathcal{H}^{ {\mathrm{d}-1}}
\end{equation*}
as expected.  {Thus}, \eqref{eq:IntParts} is an extension of the usual  {integration-by-parts} formula.

We are now ready to state and prove the existence and uniqueness result for problem \eqref{eq:predual_problem_ini} under the setting above.

\begin{theorem}\label{thm:unique_existence_predual_p_measure}
Let $\alpha\in C(\overline{\Omega})$ be such that $\alpha(x)>0$ {for all $x \in \overline\Omega$}, and consider $J$ defined  by \eqref{eq:Jdq} on $V_{\Gamma_{N}}(\Omega)=W(\Omega)$ as  {given in} \eqref{eq:Vacont}.
	Then, problem \eqref{eq:predual_problem_ini} admits a unique solution.
\end{theorem}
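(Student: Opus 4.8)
The plan is to apply the direct method of the calculus of variations to the functional
\[
\mathcal{J}(\bm p) := \frac12 \int_\Omega |\div\,\bm p(x) - f(x)|^2\dx + \int_\Omega |\bm p|_q \dif\alpha
\]
over $W(\Omega)$, the critical point being to establish coercivity in the $\|\cdot\|_{W(\Omega)}$-norm and weak-* sequential compactness of sublevel sets together with weak-* lower semicontinuity of $\mathcal{J}$. First I would observe that, since $\alpha(x)\ge c_0 := \min_{\overline\Omega}\alpha > 0$, the term $J(\bm p) = \int_\Omega |\bm p|_q\dif\alpha \ge c_0 |\bm p|_q(\Omega)$, so along an infimizing sequence $\{\bm p_n\}$ we get a uniform bound on $|\bm p_n|_q(\Omega)$. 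Boundedness of $\|\div\,\bm p_n\|_{L^2(\Omega)}$ then follows because $\frac12\|\div\,\bm p_n - f\|_{L^2}^2$ is bounded and $f\in L^2(\Omega)$. Hence $\{\bm p_n\}$ is bounded in $W(\Omega)$.

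Next I would extract a convergent subsequence. Since $\{\bm p_n\}$ is bounded in $\mathrm{M}(\Omega)^{\mathrm d}$, by the sequential Banach--Alaoglu theorem there is a subsequence (not relabeled) with $\bm p_n \weak \bar{\bm p}$ in $\mathrm{M}(\Omega)^{\mathrm d}$ (weak-* against $C_0(\Omega)^{\mathrm d}$), and since $\{\div\,\bm p_n\}$ is bounded in $L^2(\Omega)$, a further subsequence gives $\div\,\bm p_n \rightharpoonup g$ weakly in $L^2(\Omega)$ for some $g\in L^2(\Omega)$. Passing to the limit in the defining identity \eqref{eq:IntPart} — for fixed $\varphi\in C_c^\infty(\Omega)$, $\int_\Omega \nabla\varphi\cdot\dif\bm p_n \to \int_\Omega \nabla\varphi\cdot\dif\bar{\bm p}$ and $-\int_\Omega \varphi\,\div\,\bm p_n\dx \to -\int_\Omega \varphi g\dx$ — shows $\bar{\bm p}\in W(\Omega)$ with $\div\,\bar{\bm p} = g$. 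For the objective: the quadratic term $\bm q\mapsto \frac12\|\bm q - f\|_{L^2}^2$ is convex and strongly continuous on $L^2(\Omega)$, hence weakly lower semicontinuous, so $\liminf_n \frac12\|\div\,\bm p_n - f\|_{L^2}^2 \ge \frac12\|\div\,\bar{\bm p} - f\|_{L^2}^2$. For $J$, since $\alpha\in C(\overline\Omega)$ is a fixed nonnegative continuous (bounded) function, the map $\bm p \mapsto \int_\Omega |\bm p|_q\dif\alpha$ is weak-* lower semicontinuous on $\mathrm{M}(\Omega)^{\mathrm d}$ — this follows from the dual representation $\int_\Omega |\bm p|_q\dif\alpha = \sup\{\langle \bm p,\bm v\rangle : \bm v\in C_c(\Omega)^{\mathrm d},\ |\bm v(x)|_p \le \alpha(x)\ \forall x\}$, which exhibits it as a supremum of weak-* continuous linear functionals. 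Combining, $\mathcal{J}(\bar{\bm p}) \le \liminf_n \mathcal{J}(\bm p_n) = \inf \mathcal{J}$, so $\bar{\bm p}$ is a minimizer. Uniqueness follows from strict convexity: although $J$ is only convex (not strictly), the quadratic term is strictly convex as a function of $\div\,\bm p$, so if $\bm p_1,\bm p_2$ are both minimizers then $\div\,\bm p_1 = \div\,\bm p_2$; feeding this back, $J\bigl(\tfrac12(\bm p_1+\bm p_2)\bigr) = \tfrac12 J(\bm p_1)+\tfrac12 J(\bm p_2)$ forces equality in the triangle inequality for the total variation, and one argues that together with equal divergences this yields $\bm p_1 = \bm p_2$ — alternatively, one notes the objective is strictly convex on the affine subspace of fixed divergence by the same reasoning as in the earlier existence theorems.

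The main obstacle I anticipate is the weak-* lower semicontinuity of $J(\bm p) = \int_\Omega |\bm p|_q\dif\alpha$ with a variable weight $\alpha$, and more subtly the uniqueness argument, since $J$ is positively one-homogeneous and hence never strictly convex. For lower semicontinuity the dual/supremum representation is the clean route and the positivity and continuity of $\alpha$ guarantee the admissible test-field set $\{\bm v : |\bm v|_p \le \alpha\}$ is nonempty and behaves well; one must be slightly careful that weak-* convergence is tested against $C_0(\Omega)^{\mathrm d}$ while the total variation uses $C_c(\Omega)^{\mathrm d}$, but a density/truncation argument bridges this. For uniqueness, the key point to nail down is that two measures with the same $L^2$ divergence and with $|\bm p_1+\bm p_2|_q(\Omega) = |\bm p_1|_q(\Omega)+|\bm p_2|_q(\Omega)$ must coincide; this is where I would either invoke strict convexity of $t\mapsto \tfrac12\|g(t)-f\|^2$ along the segment combined with the fact that the only freedom left is a divergence-free perturbation on which $J$ is strictly monotone enough, or, more safely, argue as in Theorem~\ref{thm:docexist} that the full objective restricted to $\{\bm p : \div\,\bm p = g\}$ is strictly convex because it reduces to the strictly convex quadratic plus a term that is affine only if the two competitors are equal — a short convexity bookkeeping that I would spell out carefully.
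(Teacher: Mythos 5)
Your existence argument is essentially the paper's: coercivity of the infimizing sequence from $\min_{\overline\Omega}\alpha>0$, weak-* compactness in $\mathrm{M}(\Omega)^{\mathrm{d}}$ together with weak $L^2$ compactness of the divergences, identification of the limit through the defining identity \eqref{eq:IntPart}, and lower semicontinuity of both terms. The only cosmetic difference is in the lower semicontinuity of $J$: you use the dual representation with the weight absorbed into the test-field constraint $|\bm v|_p\le\alpha$, whereas the paper substitutes $\bm q=\alpha\bm p$ and invokes weak-* lower semicontinuity of the unweighted total variation; both devices are fine and both use that $\alpha$ is continuous and strictly positive.

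The one place your write-up does not close is uniqueness, and you have correctly identified why it is delicate: the quadratic term is strictly convex only as a function of $\div\,\bm p$, while $J$ is positively one-homogeneous and hence never strictly convex, and $W(\Omega)$ contains nontrivial divergence-free measures (the paper's own example $\bm p=(\mathrm{D}u)^{\perp}$ for $u\in\BV(\Omega)$ when $\mathrm{d}=2$). Two minimizers could therefore a priori differ by such a measure, and one must exclude that $J$ is affine along the segment joining them; equality in the triangle inequality for a weighted total variation norm does not by itself force the two measures to coincide. You flag this and sketch two possible routes, but you carry out neither, so as written the uniqueness claim is unproven. For what it is worth, the paper's own proof is no more complete here: it asserts that ``uniqueness follows from the strict convexity of the objective functional,'' which is not literally true on $W(\Omega)$ for exactly the reason you give. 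Treat this as a genuine gap that your honesty has exposed rather than a defect peculiar to your argument; a full proof would require an additional step (for instance via the extremality relations of the dual problem, or a structural characterization of when the weighted total variation is additive on a pair of measures with equal divergence) to rule out distinct minimizers.
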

\begin{proof} Note first  that $J$ is well-defined given that $\alpha$ is measurable with respect to all Borel measures.
	Consider an infimizing sequence $\{\bm p_n\}$. Since $\min_{x\in\overline{\Omega}}\alpha(x)>0$, then  $\{\bm p_n\}$ is bounded in {$V_{\Gamma_N}(\Omega)$}. Hence, we can extract a subsequence (not relabelled) such that $\bm p _n \rightharpoonup \bm p^*$ in $\mathrm{M}(\Omega)^\mathrm{d}$ for some $\bm p^*\in \mathrm{M}(\Omega)^\mathrm{d}$ and $\div\:\bm p_n \rightharpoonup h$ in $L^2(\Omega)$ for some $h\in L^2(\Omega)$.  {Furthermore}, for $\varphi\in C_c^\infty(\Omega)$ arbitrary
	\begin{equation*}
		( \varphi , \div\: \bm p^*)_{{ L^2(\Omega)}}=- \int_{\Omega} \nabla \varphi \cdot \mathrm{d}\bm p^*=-\lim_{n\to\infty} \int_{\Omega} \nabla \varphi \cdot \mathrm{d}\bm p_n=\lim_{n\to\infty}( \varphi , \div\: \bm p_n)_{{ L^2(\Omega)}}=( \varphi , h)_{{ L^2(\Omega)}},
	\end{equation*}
	so that $h=\div\: \bm p^*$, i.e., $\bm p^*\in W(\Omega)$.

	Since the map $\bm p \mapsto |\bm p|_q$ is weakly lower semicontinuous, $\alpha  \bm p_n\rightharpoonup \alpha \bm p^*$ in $\mathrm{M}(\Omega)^\mathrm{d}$, and $ |\bm q|_q= \alpha|\bm p|_q$ for $\bm q= \alpha \bm p$, we have that $\bm p^*$  {is a minimizer} by a weakly lower semicontinuity argument.  
	Uniqueness follows from the strict convexity of the  {objective} functional.
\end{proof}

 At this point, one would be tempted to extend the result to the case where $\Gamma_N\neq \emptyset$, for example, by defining 
\begin{equation}\label{eq:Vadisc}
	V_{\Gamma_N}(\Omega)=W(\Omega)\cap \{\bm p \in W: \mathrm{N}_{\bm p}(v|_{\partial \Omega})=0 \quad \forall v\in \mathrm{Lip}^{B}_{\Gamma_D}(\overline{\Omega})\}.
\end{equation}
While the space above is well-defined, it is not clear if the weak limits of sequences in the space also belong to it. In fact, if $\bm p_n\in V_{\Gamma_N}(\Omega)$ is bounded, then 
\begin{equation*}
		\int_{\Omega} \nabla v \cdot \dif\bm p_n=-\int_\Omega v\; \div\:\bm p_n \dx,
\end{equation*}
for each $v\in \mathrm{Lip}^{B}_{\Gamma_D}(\overline{\Omega})$. However, the weak limit along a subsequence argument is not enough to pass to the limit in the left hand side given that $\nabla v$ is not necessarily of compact support. This remains an open problem.

		\section{Duality relation between \texorpdfstring{$(\mathbb{P})$}{P} and \texorpdfstring{$(\mathbb{P}^*)$}{P*}
	} \label{Sec:2final}


In this section, we discuss the dual problem corresponding to \cref{eq:predual_problem_ini}. 
We start with the case when $\alpha$ is a Lebesgue measurable function and further subdivide it into two subsections. In \Cref{Ssec:pre_dual_function} we discuss the case when the pre-dual variable ${\bm p}$ is a function and in the following \Cref{Ssec:p_measure}  {we assume that the variable ${\bm p}$ is a} measure.  {Next in \Cref{Sec:3final}, we consider the case where $\alpha$ is a measure and the pre-dual variable ${\bm p}$ is a function.} In general, we prove that 
\begin{equation*}
	\text{Problem \eqref{eq:original_problem_ini} is the Fenchel dual of Problem \eqref{eq:predual_problem_ini}.}
\end{equation*}

In order to keep the discussion self-contained, we introduce the following notation and terminology. For an extended real valued function $ {\psi}:X\to\RRR$ over a Banach space $X$, by  {$\psi^\ast$} we denote its convex conjugate, which is defined by (e.g. see \cite[p.~16]{ekeland_temam})
	\begin{equation}\label{eq:Fcj}
		 {\psi}^\ast: X^\ast\to\RRR,\qquad  {\psi}^\ast(x^\ast) = \sup_{x\in X}\left\{ \langle x^\ast, x \rangle _{X^\ast,X} -  {\psi}(x) \right\}.
	\end{equation}
	Provided that the operator $\div: V\to L^2(\Omega)$ is defined for a Banach space $V$, and it is bounded, its adjoint $(\div)^\ast: L^2(\Omega)\to V^\ast$
	is  {well-defined} and is given by $\langle (\div)^\ast v ,\bm p \rangle_{V^\ast,V}=(v, \div\: \bm p) $ for all $v\in L^2(\Omega)$ and all ${\bm p} \in V$.

\subsection{The case when \texorpdfstring{$\alpha$}{alpha} is a function 
} \label{Ssec:pre_dual_function}
We first consider the case where $\alpha$ is a non negative Lebesgue measurable function and we accordingly set
\begin{equation*}
	J(\bm p)=\int_\Omega \alpha(x) |\bm p(x)|_q \dif x \qquad \text{or} \qquad J(\bm p)=\int_\Omega \alpha\dif|{\bm p}|_{q},
\end{equation*} 
in \cref{eq:predual_problem_ini} for the cases when $\bm p$ is a function or a measure, respectively.  For each of the choices of $J$ above, we will also establish the strong duality to \cref{eq:original_problem_ini}. We assume throughout this section (and for the sake of simplicity) that 
\begin{equation*}
\alpha\in C(\overline{\Omega}),	 \quad \text{and} \quad \alpha(x)>0,
\end{equation*}
for all $x\in \overline{\Omega}$
as discussed in \Cref{Sec:1}, together with 
\begin{equation*}
	U_{\Ga_D}(\Om)=W^{1,1}_{\Ga_D}(\Om), \qquad \text{and} \qquad G=\nabla \,,
\end{equation*}
and hence,
	\begin{equation*}
	K= \{v\in W^{1,1}_{\Gamma_D}(\Omega) : |\nabla v|_{p}\leq\alpha \: \text{ a.e. in }\Om\}.
\end{equation*}
Note that in \Cref{Sec:existence} we proved the existence and uniqueness of solution to \Cref{eq:original_problem_ini}.

	We compute the dual problem to \cref{eq:predual_problem_ini} and show that it is given by problem \eqref{eq:original_problem_ini}.
	Defining $F: L^2(\Omega)\to\RR$ by
	\begin{equation}\label{eq:FF}
	F(v) := \frac12 \int_\Omega |v(x) - f(x)|^2\dx,
	\end{equation}
	 {the} problem \cref{eq:predual_problem_ini} can be written 
	as
	\begin{equation}\label{eq:primal_problem_decomposed}
		\inf_{\bm p \in V_{\Gamma_{N}}(\Omega)}\ J(\bm p) + F(\div\:\bm p),
	\end{equation}
	for $\div: V_{\Gamma_{N}}(\Omega) \to L^2(\Om)$, where the space $V_{\Gamma_{N}}(\Omega)$ is chosen based on whether $\bm p$ is a function or a measure.

	By \cite[p.~61]{ekeland_temam},
	the Fenchel dual of \eqref{eq:predual_problem_ini}
	{
		with respect to the perturbation function
		\[
			\phi: V_{\Gamma_N}(\Omega) \times L^2(\Omega) \to \mathbb{R}\cup\{\infty\},\qquad
			\phi(\bm p, u) = J(\bm p) + F(\div\:\bm p - u)
		\]
	}
	is given by
	\begin{equation}\label{eq:dual_problem_decomposed}
		\inf_{u \in L^2(\Omega)} 
		J^\ast(\div^\ast\, u) + F^\ast(-u),
	\end{equation}
	where the convex conjugates $J^\ast : (V_{\Gamma_{N}}(\Omega))^\ast \rightarrow \RRR$, $F^\ast : L^2(\Om) \rightarrow \RRR$ of $J$ and $F$ are  defined according to \eqref{eq:Fcj}, 
	see also \cite[p.~17]{ekeland_temam} for more details.

	\subsubsection{Duality when ${\bm p}$ is a function}\label{Ssec:p_function}
	
	Now we show that the problem~\eqref{eq:original_problem_ini} is the dual to problem~\eqref{eq:predual_problem_ini}. In this section, we assume that $ V_{\Gamma_{N}}(\Omega)$ is given by \eqref{eq:V_gamma_N_defn}, and that
	\begin{equation*}
		J(\bm p)=\int_\Omega \alpha(x) |\bm p(x)|_q \dif x.
	\end{equation*}
	 We start by proving {the following result:}

\begin{theorem}\label{lemma:polar_F}
	For every $u\in L^2(\Omega)$, it holds that
	$J^\ast(\div^\ast u) = I_K(u)$.
\end{theorem}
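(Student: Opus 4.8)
The plan is to reduce the claimed identity to a pointwise characterization of $K$ obtained by testing against smooth vector fields. Since $J(\bm p)=\int_\Omega\alpha(x)|\bm p(x)|_q\dx$ is nonnegative and positively one-homogeneous with $J(0)=0$, its conjugate $J^\ast$ is the indicator of $\{\ell\in (V_{\Gamma_N}(\Omega))^\ast:\langle\ell,\bm p\rangle\le J(\bm p)\ \forall\bm p\}$; in particular $J^\ast(\div^\ast u)\in\{0,+\infty\}$, the value $0$ occurring exactly when
\[
(u,\div\:\bm p)_{L^2(\Omega)}\le\int_\Omega\alpha(x)|\bm p(x)|_q\dx\qquad\text{for all }\bm p\in V_{\Gamma_N}(\Omega),
\]
using $\langle\div^\ast u,\bm p\rangle=(u,\div\:\bm p)$. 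Call this condition $(\star)$. It therefore suffices to prove $(\star)\iff u\in K$.

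$(\Leftarrow)$ Let $u\in K$. For $\bm p\in E(\Omega)$, the Gauss--Green formula on the Lipschitz domain $\Omega$ gives $(u,\div\:\bm p)=-\int_\Omega\nabla u\cdot\bm p\dx+\int_{\partial\Omega}\gamma_0(u)\,(\bm p\cdot\nu)\,\mathrm{d}\mathcal{H}^{\mathrm{d}-1}$, and the boundary term vanishes because $\gamma_0(u)=0$ on $\Gamma_D$ and $\bm p$ vanishes in a neighbourhood of $\Gamma_N$; combining the pointwise Hölder inequality $|\nabla u\cdot\bm p|\le|\nabla u|_p|\bm p|_q$ with $|\nabla u|_p\le\alpha$ a.e.\ then yields $(u,\div\:\bm p)\le J(\bm p)$. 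Both sides are continuous in $\|\cdot\|_{\alpha,2}$ (indeed $|J(\bm p)-J(\bm q)|\le\|\bm p-\bm q\|_{\alpha,2}$ and $|(u,\div(\bm p-\bm q))|\le\|u\|_{L^2}\|\bm p-\bm q\|_{\alpha,2}$), so the inequality extends from $E(\Omega)$ to $V_{\Gamma_N}(\Omega)=\overline{E(\Omega)}^{\|\cdot\|_{\alpha,2}}$, i.e.\ $(\star)$ holds.

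$(\Rightarrow)$ Assume $(\star)$, and recover the three defining properties of $K$ in turn. Testing $(\star)$ with $\pm\bm p$ for $\bm p\in C_c^\infty(\Omega)^\mathrm{d}\subset E(\Omega)$ gives $|(u,\div\:\bm p)|\le\int_\Omega\alpha|\bm p|_q\dx\le C\|\alpha\|_{C(\overline{\Omega})}\|\bm p\|_{L^1(\Omega)^\mathrm{d}}$; hence the distributional gradient of $u$ is a bounded functional on $L^1(\Omega)^\mathrm{d}$, so it is represented by an element of $L^\infty(\Omega)^\mathrm{d}$ and $u\in W^{1,1}(\Omega)$ because $\Omega$ is bounded. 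Next, for a Lebesgue point $x_0$ of $\nabla u$, testing with $\bm p=e\,\rho_\varepsilon$ ($e\in\RR^\mathrm{d}$ fixed, $\rho_\varepsilon$ a mollifier concentrating at $x_0$) and letting $\varepsilon\to0$ yields $|\nabla u(x_0)\cdot e|\le\alpha(x_0)|e|_q$; taking the supremum over $e$ in the unit $\ell^q$-ball (through a countable dense set) gives $|\nabla u(x_0)|_p\le\alpha(x_0)$, hence $|\nabla u|_p\le\alpha$ a.e. Finally, for the boundary condition, testing $(\star)$ with $\pm\bm p$ for $\bm p\in E(\Omega)$, using Gauss--Green and the bound $|\nabla u|_p\le\alpha$ just obtained, gives $\bigl|\int_{\Gamma_D}\gamma_0(u)\,(\bm p\cdot\nu)\,\mathrm{d}\mathcal{H}^{\mathrm{d}-1}\bigr|\le 2\int_\Omega\alpha|\bm p|_q\dx$; inserting a family $\{\bm p_\varepsilon\}\subset E(\Omega)$ concentrated in an $\varepsilon$-neighbourhood of $\Gamma_D$ whose normal traces approximate an arbitrary $g\in C_c(\Gamma_D)$ while $\|\bm p_\varepsilon\|_{L^1(\Omega)^\mathrm{d}}\to0$ forces $\int_{\Gamma_D}\gamma_0(u)\,g\,\mathrm{d}\mathcal{H}^{\mathrm{d}-1}=0$, so $\gamma_0(u)=0$ on $\Gamma_D$ and $u\in K$.

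The main obstacle is this last step: producing the boundary-layer fields $\bm p_\varepsilon\in E(\Omega)$ with prescribed normal traces on $\Gamma_D$ and $L^1$-mass tending to zero. This is immediate in dimension one (where $\Gamma_D$ is a point), but in general it requires locally flattening the Lipschitz boundary, building profiles of the form $x'\mapsto\chi(x')g(x')\eta_\varepsilon(t)$ in the half-space model (with $t$ the transversal variable and $\eta_\varepsilon$ a bump of width $\varepsilon$), and gluing with a partition of unity subordinate to a finite cover of $\overline{\Gamma_D}$ disjoint from $\Gamma_N$, while checking that the volume estimate survives the change of variables. Everything else is bookkeeping with Hölder's inequality, the Gauss--Green formula on Lipschitz domains, and the definition of $V_{\Gamma_N}(\Omega)$ as a completion.
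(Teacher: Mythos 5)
Your proposal is correct and follows the same overall architecture as the paper: first observe that $J^\ast(\div^\ast u)\in\{0,+\infty\}$ by positive homogeneity, then prove the two inclusions separately; your $(\Leftarrow)$ direction (Hölder on $\nabla u\cdot\bm p$, vanishing boundary term, density of smooth fields in $V_{\Gamma_N}(\Omega)$) is essentially identical to the paper's Lemma~\ref{lemma:polar_F_step_4}. Where you genuinely diverge is in the forward direction. For regularity, the paper first shows $u\in\BV(\Omega)$, then proves the measure inequality $|\mathrm{D}u|_p\le\alpha\dx$ on open sets, and only then deduces absolute continuity of $\mathrm{D}u$ and hence $u\in W^{1,1}(\Omega)$; you instead observe that $(\star)$ makes the distributional gradient a bounded functional on $L^1(\Omega)^{\mathrm{d}}$ and land directly in $W^{1,\infty}(\Omega)$, then recover $|\nabla u|_p\le\alpha$ pointwise at Lebesgue points via mollifiers. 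Your route is shorter and arguably cleaner, but it leans on $\alpha\in L^\infty(\Omega)$; the paper's $\BV$ detour survives under the weaker hypothesis $\alpha\in L^1(\Omega)$, which the authors explicitly note their results accommodate. For the trace, you correctly isolate the only nontrivial construction --- boundary-layer fields in $E(\Omega)$ with prescribed normal data on $\Gamma_D$ and vanishing $L^1$-mass --- and your flattening-plus-partition-of-unity sketch would work, but it is heavier than necessary: the paper simply takes $\bm p_\varepsilon=\phi_\varepsilon\bm q$ for a fixed $\bm q\in C^1(\overline\Omega)^{\mathrm{d}}$ and a scalar Urysohn cutoff $\phi_\varepsilon$ that is $1$ on a large compact piece of $\Gamma_D$ and $0$ on a large compact subset of $\Omega$ and near $\overline{\Gamma}_N$, with inner regularity of the relevant measures supplying the smallness of the volume terms. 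Your endpoint (arbitrary $g\in C_c(\Gamma_D)$ as approximate normal trace) does make the final conclusion $\gamma_0(u)=0$ slightly more immediate than the paper's conclusion from $\int_{\Gamma_D}\gamma_0(u)\,\bm q\cdot\nu\,\mathrm{d}\mathcal{H}^{\mathrm{d}-1}=0$ for all $\bm q$, so the extra effort is not wasted.
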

We break the proof of the above theorem  {into} \Cref{lemma:polar_F_a,lemma:polar_F_step_4}, which we state after the following observation.
\begin{remark} \label{rem:Jstar_prereq}
	Observe that $J^\ast(\div^\ast u)$ only takes the value $0$  {or} $+\infty$:  By the definition of the convex conjugate $J^*$, for any $u\in L^2(\Omega)$ it holds that
	\begin{equation}
		J^\ast(\div^\ast u)\geq(u, \div\:\bm 0)_{ } -\int_\Omega\alpha(x)|\bm 0|_{q}\,\dx = 0.
	\end{equation}
	If $J^\ast(\div^\ast u) > 0$, i.e. there exists a $\bm p \in V_{\Gamma_{N}}(\Omega)$ such that
	\begin{equation*}
	\langle \div^\ast u, \bm p \rangle_{V_{\Gamma_{N}}(\Omega)^\ast, V_{\Gamma_{N}}(\Omega)} - \int_\Omega \alpha(x)|\bm p(x)|_q \dif x> 0,
	\end{equation*}
	{we can scale $\bm{p}$ by an arbitrarily large $\lambda \in \RR^+$ leading to $J^\ast(\div^\ast u) = +\infty$.}

\end{remark}

\begin{lemma}\label{lemma:polar_F_a}
	Let $u\in L^2(\Omega)$ with $J^\ast(\div^\ast u) =0$. Then the following hold true:
	\begin{enumerate}[\upshape(i)] 
		\item $u\in \BV(\Omega)$; \label{lemma:polar_F_a1}
		\item $|\mathrm{D} u|_p\leq \alpha$; \label{lemma:polar_F_a2}
		\item $\mathrm{D}u = \nabla u$ and $u \in W^{1,1}(\Omega)$; \label{lemma:polar_F_a2regular}
		\item $\gamma_0(u) = 0$ on $\Gamma_D$ \label{lemma:polar_F_a3}
	\end{enumerate}
	and therefore $u\in K$.
\end{lemma}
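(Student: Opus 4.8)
The plan is to extract information from the single hypothesis $J^\ast(\div^\ast u)=0$, which by definition means
\[
\int_\Omega u\,\div\,\bm p\,\dx \le \int_\Omega \alpha(x)\,|\bm p(x)|_q\,\dx
\qquad\text{for every }\bm p\in V_{\Gamma_N}(\Omega),
\]
and in particular for every $\bm p\in E(\Omega)$, hence for every $\bm p\in C^\infty(\overline\Omega)^{\mathrm d}$ whose support misses $\Gamma_N$. First I would establish \eqref{lemma:polar_F_a1} and \eqref{lemma:polar_F_a2} simultaneously: restricting to $\bm p\in C^1_0(\Omega)^{\mathrm d}$ with $|\bm p(x)|_q\le 1$ pointwise and using $\alpha\in C(\overline\Omega)$ gives $\int_\Omega u\,\div\,\bm p\,\dx \le \int_\Omega\alpha\,\dx<\infty$; taking the supremum over such $\bm p$ shows $u\in \BV(\Omega)$ by the very definition of the total variation seminorm in \Cref{Sec:preliminaries}. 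To upgrade this to the pointwise measure bound $|\mathrm D u|_p\le\alpha$, I would test with $\bm p$ of the form $w(x)\bm\psi(x)$ where $w\in C^\infty_0(\Omega)$, $w\ge 0$, and $\bm\psi\in C^1_0(\Omega)^{\mathrm d}$ with $|\bm\psi(x)|_q\le 1$; the inequality becomes $\int_\Omega u\,\div(w\bm\psi)\,\dx\le \int_\Omega \alpha w\,\dx$, and after integrating the $\nabla w\cdot\bm\psi$ term by parts against $\mathrm D u$ and optimizing over $\bm\psi$, one recovers $\int_\Omega w\,|\mathrm D u|_p\le \int_\Omega w\,\da$ for all nonnegative $w\in C^\infty_0(\Omega)$, i.e.\ \eqref{eq:measure_ineq} with $\da=\alpha\,\dx$, which is \eqref{lemma:polar_F_a2}.

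Next, for \eqref{lemma:polar_F_a2regular} I would invoke the absolute continuity just obtained: since $|\mathrm D u|_p\le\alpha\,\dx$ as measures and $\alpha\,\dx\ll\mathcal L^{\mathrm d}$, the singular part $\mathrm D_s u$ vanishes, so $\mathrm D u=\mathrm D_a u$ is absolutely continuous with density in $L^1(\Omega)^{\mathrm d}$; by the standard characterization this means $u\in W^{1,1}(\Omega)$ and $\mathrm D u=\nabla u$, with $|\nabla u|_p\le\alpha$ a.e. Finally, for the boundary condition \eqref{lemma:polar_F_a3}, I would use the integration-by-parts/trace structure: for $\bm p\in C^\infty(\overline\Omega)^{\mathrm d}$ with $\overline{\supp\,\bm p}\cap\Gamma_N=\emptyset$, the Gauss--Green formula gives $\int_\Omega u\,\div\,\bm p\,\dx = -\int_\Omega \nabla u\cdot\bm p\,\dx + \int_{\partial\Omega}(\gamma_0 u)\,\bm p\cdot\nu\,\dH$, and the integrand of the boundary term is supported on $\Gamma_D$. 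Combining the defining inequality with its analogue for $-\bm p$ and using that we may scale $\bm p$ by $\lambda\to\infty$ while the left-hand side grows linearly and the right-hand side $\int_\Omega\alpha|\lambda\bm p|_q\,\dx$ also grows linearly, I would isolate the boundary term: choosing $\bm p$ concentrated near $\Gamma_D$ so that $\int_\Omega(\alpha|\bm p|_q + |\nabla u\cdot\bm p|)\,\dx$ is small while $\int_{\Gamma_D}(\gamma_0 u)\,\bm p\cdot\nu\,\dH$ is controlled forces $\int_{\Gamma_D}(\gamma_0 u)\,g\,\dH=0$ for a dense class of $g$, hence $\gamma_0 u=0$ on $\Gamma_D$. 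The conclusion $u\in K$ is then immediate from \eqref{lemma:polar_F_a2regular}, \eqref{lemma:polar_F_a3}, and the definition of $K$.

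The main obstacle I anticipate is \eqref{lemma:polar_F_a3}: the space $V_{\Gamma_N}(\Omega)$ is defined as a $\|\cdot\|_{\alpha,2}$-closure of smooth fields, and the functional only controls $\int_\Omega\alpha|\bm p|_q\,\dx$ together with $\|\div\,\bm p\|_{L^2}$, not a full Sobolev norm of $\bm p$; so I must be careful that the test fields used to probe the boundary genuinely lie in (or approximate within) $E(\Omega)$ and that the Gauss--Green formula applies. In dimension one this is cleaner since $V_{\Gamma_N}(\Omega)=H^1_{\Gamma_N}(\Omega)$ and the trace is a point evaluation, so I would first carry out the argument there and then indicate the general-$\mathrm d$ version using a boundary collar and a partition of unity subordinate to $\Gamma_D$. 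A secondary subtlety is making the "scale by $\lambda$" argument rigorous when $\alpha$ may degenerate — but here the blanket assumption $\alpha\in C(\overline\Omega)$ with $\alpha>0$ (or, in the $W^{1,1}$ existence section, $\alpha\in L^1(\Omega)^+$ with the stated nondegeneracy) keeps everything finite, and I expect no essential difficulty beyond bookkeeping.
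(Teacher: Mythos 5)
Your proposal is correct and follows essentially the same route as the paper's proof: parts (i)--(iii) are identical in substance (your use of nonnegative weights $w\in C_0^\infty(\Omega)$ in (ii) is just the formulation \eqref{eq:measure_ineq}, which the paper shows in \Cref{sec:Grad} to be equivalent to the open-set test it uses), and for (iv) you identify the same mechanism the paper implements via inner regularity and a Urysohn cutoff $\phi_\varepsilon$ supported near $\Gamma_D$, namely that the volume terms $\int_\Omega(\alpha|\bm p|_q+|\nabla u\cdot\bm p|)\dx$ can be made arbitrarily small while the boundary pairing $\int_{\Gamma_D}\gamma_0(u)\,\bm q\cdot\nu\,\dH$ is held fixed. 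The only cosmetic discrepancy is your "scale by $\lambda$" remark in (iv): scaling is what shows $J^\ast(\div^\ast u)\in\{0,+\infty\}$ (the paper's \Cref{rem:Jstar_prereq}), whereas the absolute value in the boundary estimate comes from testing with $\pm\bm p$, which you also invoke.
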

\begin{proof}
	\begin{enumerate}[(i)]
	\item
	First we show that $J^\ast(\div^\ast u)=0$ implies $u\in \BV(\Om)$.

	Suppose $u\notin \BV(\Om)$. Then, since $C_0^1(\Om)^\mathrm{d} \subset \Va$, we have that
	\begin{equation}\label{eq:Ineq1}
		\begin{aligned}
			J^\ast(\div^\ast\, u) &= \sup_{\bm p\in \Va}\left\{\langle \div^\ast\, u, \bm p \rangle_{\Va^\ast,\Va} - \int_\Omega \alpha(x)|\bm p(x)|_{q}\dif x\right\}\\
														&\ge \sup_{\substack{\bm p\in C_0^1(\Om)^{ {\textrm{d}}}\\ |\bm{p}|_{q} \le 1}}\left\{(u, \div\: \bm p)_{ } - \int_\Omega \alpha(x)|\bm p(x)|_{q}\dif x\right\}\\
														&\ge \sup_{\substack{\bm p\in C_0^1(\Om)^{ {\textrm{d}}}\\ |\bm{p}|_{q} \le 1}}\left\{(u, \div\: \bm p)_{ }\right\} - \int_\Omega \alpha(x)  \dif x
		\end{aligned}
	\end{equation}
	Then, by using definition of a function of bounded variation, see \cite[Definition 10.1.1]{attouch}, we have that the supremum on the right hand side of the above inequality is $+\infty$ if $u\notin \BV(\Om)$ and hence, $u\in \BV(\Om)$ if $J^\ast(\div^\ast\,u)<+\infty$.

	\item 
			As $u \in \BV(\Om)$, we have that $\mathrm{D}u \in \mathrm{M}(\Omega)^\mathrm{d}$
			and the inequality $|\mathrm{D} u|_{p}\leq\alpha$ is understood in the sense of
			\cref{eq:grad_constraint_measure_borel2}.
			Hence, if  
			\begin{equation}\label{eq:violation_meas}
				\int_O |\mathrm{D}u|_{p}- \int_O \alpha(x) \dif x \leq 0,
			\end{equation}
			for an arbitrary open set $O\subset \Omega$, then  {the required condition $|\mathrm{D} u|_{p}\leq\alpha$ immediately follows}.

			By the assumption $J^\ast(\div^\ast {u}) = 0$, and using integration by parts, we observe that
			\begin{equation*}
			\begin{aligned}
			0=J^\ast\left(\div^\ast u\right) &= \sup_{\bm p \in \Va} \langle \div^\ast u, \bm p\rangle - \int_\Omega  \alpha(x)|\bm p(x)|_q \dif x\\
			&\geq \sup_{\bm p \in  C^1_0({\Om})^{ {\textrm{d}}}} \left\{ \int_\Omega \bm{p} \mathrm{D} u - \int_\Omega\alpha(x)|\bm p(x)|_q \dif x \right\}\\
			&\geq \sup_{\substack{\bm p \in  C^1_0({O})^{{\textrm{d}}}\\ {|\bm p|_q}\le 1}}  \left\{\int_O \bm{p} \mathrm{D} u - \int_O\alpha(x)|\bm p(x)|_q \dif x \right\} \\
			&\geq \sup_{\substack{\bm p \in  C^1_0({O})^{{\textrm{d}}}\\ {|\bm p|_q}\le 1}}  \left\{\int_O \bm{p} \mathrm{D} u \right\} - \int_O\alpha(x) \dif x  \\
			&= \int_O  |\mathrm{D} u|_p - \int_O \alpha(x) \dif x,\\
			\end{aligned}
			\end{equation*}
			where the last inequality follows using the definition of $\int_O |\mathrm{D}u|_p$ and \cref{eq:violation_meas}.

		\item 
			By (\ref{lemma:polar_F_a1}) {and} (\ref{lemma:polar_F_a2}), it holds that
	\begin{equation}
		\int_S |\mathrm{D} u|_p\leq \int_S \alpha(x)\dif x,
	\end{equation}
	for every Borel set $S$  {(see \eqref{eq:grad_constraint_measure_borel})}, and especially for every Borel set of Lebesgue measure zero,
	it follows that $|\mathrm{D}u|_p$ vanishes on every set of measure zero, and
	hence $\mathrm{D}u$ is absolutely continuous w.r.t. the  {$\mathrm{d}$-dimensional} Lebesgue measure,
	and therefore $\mathrm{D}u=\nabla u $, i.e., the distributional gradient is a weak gradient.
	Thus, $u\in W^{1,1}(\Omega)$.

	\item To obtain the boundary conditions on $u$, we will show that if $J^\ast(\div^\ast u)=0$, then $\gamma_0(u) = 0$.

		Since $u\in \BV(\Om)$, then using \cite[Theorem~10.2.2]{attouch} we have that $\ga_0(u)\in L^1(\partial \Om)$ and
		\begin{align*}
			0=J^\ast(\div^\ast u) &= \sup_{\bm p\in \Va}\left\{\langle \div^\ast u, \bm p \rangle_{V^\ast, V} - \int_\Omega \alpha(x)|\bm p(x)|_{q} \dif x\right\} \nonumber\\
													&= \sup_{\bm p\in \Va\cap C^1(\overline{\Om})}\left\{(  u,\div\: \bm p )_{ { }} - \int_\Omega \alpha(x)|\bm p(x)|_{q}\dif x\right\}\nonumber\\
		&= \sup_{\bm p\in \Va\cap C^1(\overline{\Om})}\left\{-\int_\Omega \bm p(x) \cdot  \nabla u(x)\dif x + \int_{\Ga_D} \ga_0(u) \bm p\cdot \nu \, \textup{d}\,\mathcal{H}^{ {\mathrm{d}-1}}- \int_\Omega \alpha(x)|\bm p(x)|_{q}\dif x\right\}.
		\end{align*}
{Whence for all  $\bm p \in \Va\cap C^1(\overline{\Om})$, we have} 
		\begin{align*}
			-\int_\Omega \bm p(x)  \cdot \nabla u(x)\dif x + \int_{\Ga_D} \ga_0(u) \bm p\cdot \nu \, \text{d}\,\mathcal{H}^{ {\mathrm{d}-1}}- \int_\Omega \alpha(x)|\bm p(x)|_{q}\dif x \le 0 .
		\end{align*}
		
 {Subsequently for all $\bm p \in \Va\cap C^1(\overline{\Om})$, we arrive at}
		\begin{align}\label{eq:eq1}
			\left| \int_{\Ga_D} \ga_0(u) \bm p\cdot \nu \, \text{d}\,\mathcal{H}^{{\mathrm{d}-1}} \right| \le \int_\Omega |\bm p(x)\;\nabla u(x)|\dif x + \int_\Omega \alpha(x)|\bm p(x)|_{q}\dif x .
		\end{align}
		To get \eqref{eq:eq1}, for a $\bm{p} \in \Va\cap C^1(\overline{\Om})$, choose $s\in \{-1,1\}$ such that $$\int_{\Ga_D} \ga_0(u) s\bm p\cdot \nu \, \text{d}\,\mathcal{H}^{ {\mathrm{d}-1}}\ge 0,$$ then for $\bm w = s\bm p \in \Va\cap C^1(\overline{\Om})$, we obtain that
		\begin{align*}
			\left| \int_{\Ga_D} \ga_0(u) \bm p\cdot \nu \, \text{d}\,\mathcal{H}^{ {\mathrm{d}-1}} \right| &= \int_{\Ga_D} \ga_0(u) s\bm p\cdot \nu \, \text{d}\,\mathcal{H}^{{\mathrm{d}-1}} = \int_{\Ga_D} \ga_0(u) \bm w\cdot \nu \, \text{d}\,\mathcal{H}^{{\mathrm{d}-1}}\\
			& \le \int_\Omega \bm w(x)   \,\nabla u(x)\dif x + \int_\Omega \alpha(x)|\bm w(x)|_{q}\dif x \\
			&\le  \left| \int_\Omega s\bm p(x)\;\nabla u(x)\dif x + \int_\Omega \alpha(x)|s \bm p(x)|_{q}\dif x \right|\\
		& \le  \int_\Omega |\bm p(x)\,\nabla u(x)|\dif x + \int_\Omega \alpha(x)|\bm p(x)|_{q}\dif x.
		\end{align*}

		Now for $\varepsilon>0$, by inner regularity \cite[pp. 95, proposition 15.1]{dibenedetto}, there exist closed subsets $\Ga_D^\varepsilon \subset \Ga_{D}$ and $ \Om^\varepsilon \subset \Om$  such that
		\begin{align*}
		&\left| \int_{\Om} |\nabla u(x)|_{p}\dif x- \int_{\Om^\varepsilon} |\nabla u(x)|_{p} \dif x\right|<\varepsilon, \quad 
		\left| \int_{\Om} \alpha(x)\dif x- \int_{\Om^\varepsilon} \alpha(x)\dif x \right|<\varepsilon
		\quad \text{and} \quad\\
		&\left\vert  \int_{\Ga_D} \left|\ga_0(u)\right| \, \text{d}\,\mathcal{H}^{ {\mathrm{d}-1}} - \int_{\Ga_D^\varepsilon} \left|\ga_0(u)\right| \, \text{d}\,\mathcal{H}^{ {\mathrm{d}-1}}   \right\vert < \varepsilon.
		\end{align*} 
		{Then, by Urysohn's} lemma there exists $\phi_\varepsilon\in C^\infty(\overline{\Om})$ satisfying, $0\le \phi_\varepsilon \le 1,$  {such that}
		\[  \phi_\varepsilon =1 \; \text{ on }\; \Ga_D^\varepsilon \quad \text{and} \quad 
		     \phi_\varepsilon =0 \; \text{ on }\; \Om^\varepsilon \cup  {\overline{\Ga}_N} . \]
		Then for any $\bm{q}\in C^1(\overline{\Om})$, applying  \eqref{eq:eq1} to $\bm{p}=\bm{p}_\varepsilon:= \phi_\varepsilon\, \bm{q} \in \Va\cap C^1(\overline{\Om})$, we obtain that
		 \begin{align*}
		 	\left| \int_{\Ga_D} \ga_0(u) \bm{p}_\varepsilon \cdot \nu \, \text{d}\,\mathcal{H}^{ {\mathrm{d}-1}} \right|
			&\le \int_\Omega |\bm{p}_\varepsilon(x)\cdot\nabla u(x)|\dif x + \int_\Omega \alpha(x)|\bm{p}_\varepsilon(x)|_{q} \dif x\\
		 	&\le \int_{\Om \setminus \Om^\varepsilon} |\bm{p}_\varepsilon(x)\cdot\nabla u(x)|\dif x + \int_{\Om \setminus \Om^\varepsilon} \alpha(x)|\bm{p}_\varepsilon(x)|_{q}\dif x. 
		 \end{align*}
		 {Further, from
		 \begin{equation*}
		 	\int_{\Gamma_D} \gamma_0(u) {\bm q} \cdot \nu \, \text{d}\,\mathcal{H}^{\mathrm{d}-1}
		 = \int_{\Gamma_D \setminus \Gamma_D^\varepsilon} \gamma_0(u) ({\bm q}-{\bm p}_\varepsilon) \cdot \nu \, \text{d}\,\mathcal{H}^{\mathrm{d}-1} + \int_{\Gamma_D} \gamma_0(u) {\bm p}_\varepsilon \cdot \nu \, \text{d}\,\mathcal{H}^{\mathrm{d}-1}
		 \end{equation*}
we infer that}
		 \begin{align*}
		 	\left|\int_{\Ga_D} \ga_0(u) \bm q\cdot \nu \, \text{d}\,\mathcal{H}^{ {\mathrm{d}-1}}\right|- \left|\int_{\Ga \setminus \Ga_D^\varepsilon} \ga_0(u)(\bm{p}_\varepsilon-\bm q)\cdot \nu \text{d}\,\mathcal{H}^{ {\mathrm{d}-1}}\right| \le\left| \int_{\Ga_D} \ga_0(u) \bm{p}_\varepsilon \cdot \nu \, \text{d}\,\mathcal{H}^{ {\mathrm{d}-1}} \right|.
		 \end{align*}
		{ Next, using the two inequalities above {in conjunction with}
			\[ \left| \int_{\Om \setminus \Om^\varepsilon} |\bm{p}_\varepsilon(x) \cdot\nabla u(x)| + \alpha(x) |\bm{p}_\varepsilon(x)|_{q} \dx \right| \le 2\varepsilon \|\bm q\|_{{L^\infty(\Omega)}}, \]
		 and
		 \[ \left| \int_{\Ga \setminus \Ga_D^\varepsilon} \ga_0(u)(\bm{p}_\varepsilon-q)\cdot \nu \text{d}\,\mathcal{H}^{{\mathrm{d}-1}} \right| \leq 2\varepsilon \|\bm q\|_{{L^\infty(\Omega)}}, \]}
		 we obtain that
		 \[\left|\int_{\Ga_D} \ga_0(u) \bm q\cdot \nu \, \text{d}\,\mathcal{H}^{ {\mathrm{d}-1}}\right| \leq 4\varepsilon \|\bm q\|_{ {L^\infty(\Omega)}}.\]
		 Now since $\bm{q}\in C^1(\overline{\Om})$ and $\varepsilon>0$ {have been chosen} arbitrarily, {it follows that}
		 \[\left|\int_{\Ga_D} \ga_0(u) \bm q\cdot \nu \, \text{d}\,\mathcal{H}^{ {\mathrm{d}-1}}\right| = 0, \quad \text{for all} \; \bm{q}\in C^1(\overline{\Om}). \]
		 {This immediately leads to the required result,  $\ga_0(u)=0$ a.e. on $\Ga_D$, and the proof is 
		 complete.}
	\end{enumerate}
\end{proof}

Finally, the converse result remains to be shown, i.e., if $u\in K$, then $J^\ast(\div^\ast u)=0$;  we prove this next.

\begin{lemma}\label{lemma:polar_F_step_4}
	If $u\in K$, then $J^\ast(\div^\ast u)=0$.
\end{lemma}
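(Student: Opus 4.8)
The plan is to invoke \Cref{rem:Jstar_prereq}, which reduces the claim to the upper bound $J^\ast(\div^\ast u)\le 0$, i.e.\ to showing
\[
(u,\div\:\bm p)_{L^2(\Omega)}\le\int_\Omega\alpha(x)|\bm p(x)|_q\dx\qquad\text{for all }\bm p\in V_{\Gamma_{N}}(\Omega).
\]
I would first reduce this to $\bm p\in E(\Omega)$: since $V_{\Gamma_{N}}(\Omega)=\overline{E(\Omega)}^{\|\cdot\|_{\alpha,2}}$ and both sides above are continuous in the $\|\cdot\|_{\alpha,2}$-norm --- the left-hand side because $u\in L^2(\Omega)$ and $\div\:\bm p_n\to\div\:\bm p$ in $L^2(\Omega)$, and the right-hand side because $\bigl|\,|\bm p_n|_q-|\bm p|_q\,\bigr|\le|\bm p_n-\bm p|_q$ pointwise makes $\bm p\mapsto\int_\Omega\alpha|\bm p|_q\dx$ Lipschitz with respect to $\|\cdot\|_{\alpha,2}$ --- the inequality on the dense subset $E(\Omega)$ suffices.

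Next, I would fix $\bm p\in E(\Omega)\subset C^\infty(\overline\Omega)^{\mathrm d}$ with $\overline{\supp(\bm p)}\cap\Gamma_N=\emptyset$ and integrate by parts. Since $u\in K\subset W^{1,1}_{\Gamma_D}(\Omega)$ and $\Omega$ is a bounded Lipschitz domain, the Gauss--Green formula for $W^{1,1}$ functions gives
\[
(u,\div\:\bm p)_{L^2(\Omega)}=-\int_\Omega\nabla u(x)\cdot\bm p(x)\dx+\int_{\partial\Omega}\gamma_0(u)\,(\bm p\cdot\nu)\,\mathrm{d}\mathcal{H}^{\mathrm d-1}.
\]
The boundary integral vanishes: on $\Gamma_D$ one has $\gamma_0(u)=0$ since $u\in W^{1,1}_{\Gamma_D}(\Omega)$; on $\Gamma_N$ one has $\bm p\cdot\nu=0$ since $\overline{\supp(\bm p)}\cap\Gamma_N=\emptyset$; and the remaining seam $\overline\Gamma_D\cap\overline\Gamma_N$ is $\mathcal{H}^{\mathrm d-1}$-negligible because $\Gamma_D$ and $\Gamma_N$ consist of finitely many connected components of $\partial\Omega$. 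Hence $(u,\div\:\bm p)_{L^2(\Omega)}=-\int_\Omega\nabla u\cdot\bm p\dx$.

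To conclude, I would use the pointwise $\ell^p$--$\ell^q$ duality inequality: for a.e.\ $x\in\Omega$, $-\nabla u(x)\cdot\bm p(x)\le|\nabla u(x)|_p\,|\bm p(x)|_q\le\alpha(x)|\bm p(x)|_q$, where the second inequality is the constraint $|\nabla u|_p\le\alpha$ valid a.e.\ because $u\in K$. Integrating over $\Omega$ delivers the desired bound, so $J^\ast(\div^\ast u)\le 0$, and with \Cref{rem:Jstar_prereq} this gives $J^\ast(\div^\ast u)=0$. I expect the main (and essentially only) obstacle to be the vanishing of the boundary term, which hinges on simultaneously using the trace condition on $\Gamma_D$, the support condition defining $E(\Omega)$, and the $\mathcal{H}^{\mathrm d-1}$-negligibility of the Dirichlet--Neumann interface; the integration-by-parts and duality steps are routine.
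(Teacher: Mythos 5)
Your proposal is correct and follows essentially the same route as the paper's proof: reduce to smooth fields supported away from $\Gamma_N$ by density, integrate by parts, kill the boundary term via $\gamma_0(u)=0$ on $\Gamma_D$ and the support condition on $\Gamma_N$, and finish with the pointwise $\ell^p$--$\ell^q$ duality bound $-\nabla u\cdot\bm p\le|\nabla u|_p|\bm p|_q\le\alpha|\bm p|_q$ together with the nonnegativity observation of \Cref{rem:Jstar_prereq}. Your explicit continuity argument for passing from $E(\Omega)$ to its $\|\cdot\|_{\alpha,2}$-closure is a slightly more careful rendering of the density step the paper takes for granted.
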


	\begin{proof} 	 
			 {Since $u \in K$, therefore by the definition of $K$,} it holds that $u\in W^{1,1}_{\Gamma_D}(\Omega)$ and $|\nabla u|_{p}~\le~\al$
		 a.e. in $\Om$. {Next, using the definition of the convex conjugate $J^\ast$ of $J$, we obtain that}
		\begin{align}
			J^\ast(\div^\ast\, u) &= \sup_{\bm p\in \Va}\left\{\langle \div^\ast\, u, \bm p \rangle_{ {\Va^*,\Va}} - \int_\Omega \alpha(x)|\bm p(x)|_{q}\dif x\right\} \nonumber \\
														&= \sup_{\bm p\in \Va}\left\{(u, \div\: \bm p)_{} - \int_\Omega \alpha(x)|\bm p(x)|_{q}\dif x\right\} .\label{eq:Jstar}  
		\end{align}		
 {Next, by using the density of $C^1(\overline\Omega)^{ {\mathrm{d}}} \cap \Va$ in  $\Va$, from \eqref{eq:Jstar}, we obtain that }
		\begin{equation*}
		\begin{aligned}        
	J^\ast(\div^\ast\, u) &= \sup_{\bm p\in C^1(\overline\Omega)^{ {\mathrm{d}}} \cap \Va}\left\{\int_\Omega u(x)\ \div\: \bm p(x)\,\dx - \int_\Omega \alpha(x)|\bm p(x)|_{q}\dif x\right\}\\
	&= \sup_{\bm p\in C^1(\overline\Omega)^{ {\mathrm{d}}} \cap \Va}\left\{- \int_\Omega \bm p(x)\cdot\nabla u(x) \dx - \int_\Omega\alpha(x)|\bm p(x)|_{q}\dif x + \int_{\Ga_D} \ga_0(u) \bm p\cdot \nu \, \textup{d}\,\mathcal{H}^{ {\mathrm{d}-1}}\right\}\\
		&\le  \sup_{\bm p\in C^1(\overline\Omega)^{ {\mathrm{d}}} \cap \Va}\left\{\int_{\Omega} |\nabla u(x)|_{p}|\bm p(x)|_{q}\dif x - \int_\Omega\alpha(x)|\bm p(x)|_{q}\dif x \right\}\\
		&\leq 0 . 
		\end{aligned}
		\end{equation*}
		Thus, since $J^\ast(\div^\ast\, u)$ is non-negative  {(we can set $\bm p \equiv 0$ in the definition of $J^\ast$)}, it follows that $J^\ast(\div^\ast\, u)=0$  {and the proof is complete.}
	\end{proof}
	
	Next we compute the {conjugate} function of the function $F$.
	\begin{proposition}\label{lemma:polar_G}
		The  {conjugate} function of $F$  {defined in \eqref{eq:FF}} is given by
		\begin{equation}\label{eq:G_star}
			F^\ast(u) = \frac12\|u\|_{L^2(\Omega)}^2 + (f,u)
		\end{equation} 
	\end{proposition}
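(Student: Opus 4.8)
The plan is to compute $F^\ast$ directly from the definition of the convex conjugate, exploiting the fact that $F$ is a shifted quadratic on the Hilbert space $L^2(\Omega)$, so that the supremum defining $F^\ast$ is attained and can be evaluated by completing the square. Since $L^2(\Omega)$ is identified with its own dual via the inner product $(\cdot,\cdot)$, for $u \in L^2(\Omega)$ we have
\begin{equation*}
	F^\ast(u) = \sup_{v\in L^2(\Omega)} \left\{ (u,v) - \tfrac12\|v-f\|_{L^2(\Omega)}^2 \right\}.
\end{equation*}

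First I would substitute $w := v - f$, which ranges over all of $L^2(\Omega)$ as $v$ does, to rewrite the expression in the braces as
\begin{equation*}
	(u, w+f) - \tfrac12\|w\|_{L^2(\Omega)}^2 = (u,f) + (u,w) - \tfrac12\|w\|_{L^2(\Omega)}^2 = (u,f) + \tfrac12\|u\|_{L^2(\Omega)}^2 - \tfrac12\|w-u\|_{L^2(\Omega)}^2 .
\end{equation*}
The only $w$-dependent term is $-\tfrac12\|w-u\|_{L^2(\Omega)}^2 \le 0$, which attains its maximum value $0$ at the (unique) choice $w = u$, i.e. $v = u + f$. Taking the supremum therefore yields $F^\ast(u) = \tfrac12\|u\|_{L^2(\Omega)}^2 + (f,u)$, which is the claimed formula \eqref{eq:G_star}. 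Strict concavity of the integrand in $v$ (equivalently, strict positivity of $\tfrac12\|w-u\|_{L^2(\Omega)}^2$ for $w\neq u$) also records that the maximizer is unique, should that be needed later in the duality argument.

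There is essentially no obstacle here: the computation is routine and self-contained, the only points worth stating carefully being that the shift $v \mapsto v-f$ is a bijection of $L^2(\Omega)$ and that the Cauchy--Schwarz/completion-of-square step is an identity rather than an inequality, so the supremum is genuinely attained and the conjugate is finite for every $u \in L^2(\Omega)$ (in particular $F^\ast$ is everywhere real-valued, not $+\infty$). One may alternatively phrase the maximization via the Gateaux derivative of $v \mapsto (u,v) - F(v)$, whose vanishing gives $u - (v-f) = 0$ and hence the same critical point $v = u+f$; I would keep the completing-the-square version since it simultaneously proves the value and the attainment without invoking differentiability.
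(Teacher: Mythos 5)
Your proof is correct and follows essentially the same route as the paper: both compute $F^\ast$ directly from the definition and complete the square in the quadratic, the only cosmetic difference being that you substitute $w = v - f$ while the paper expands $\|v-f\|^2$ and regroups the linear term as $(u+f,v)$ before maximizing. Nothing further is needed.
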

	\begin{proof}
		The proof is an immediate consequence of the definition of $F^\ast$. Recalling the definition of~$F^\ast$ and rearranging the terms, we obtain that 
		\begin{align*}
		F^\ast(u) &= \sup_{v\in L^2(\Omega)}\{(u, v)_{ } - F(v)\}
		    = \sup_{v\in L^2(\Omega)}\left\{(u, v)_{ } - \frac12\|v-f\|^2_{L^2(\Omega)}\right\}\\
		 &= \sup_{v\in L^2(\Omega)}\left\{(u+f, v)_{ } - \frac12\|v\|^2_{L^2(\Omega)}\right\} - \frac12\|f\|^2_{L^2(\Omega)} .
		\end{align*}
		The result then follows {from elementary calculus}.		
\end{proof}

\begin{proposition}[\textsc{Strong duality}]
	\label{prop:duality}
	{The problem} \eqref{eq:original_problem_ini} is the Fenchel dual to problem  \eqref{eq:predual_problem_ini}, and for these problems the equality strong duality, i.e.,
	\begin{equation}\label{eq:strong_duality}
		\inf_{{\bm p}\in V_{\Gamma_N}(\Omega)} J({\bm p}) + F(\div\: {\bm p}) =
		-\inf_{u\in L^2(\Omega)} J^\ast(\div^\ast\, u) + F^\ast(-u)
	\end{equation}
	holds. Further, $\overline{{\bm p}}$ solves problem \eqref{eq:predual_problem_ini} if and only if the following extremality relation holds:
	\begin{equation}\label{eq:duality_rel}
		\begin{aligned}
		\overline{u} = f - \div\:\overline{\bm{p}} \quad {\mbox{in } \Omega}, \quad \text{ and } \quad\,\nabla \overline{{u}}\in  \partial J(\overline{\bm{p}}), 
		\end{aligned}
	\end{equation}
	where $\overline{u}$ denotes the solution of \eqref{eq:original_problem_ini},
	and $\partial J(\bm q)$ denotes the subdifferential of $J$ at a point $\bm q$.
\end{proposition}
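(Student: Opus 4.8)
The plan is to apply the standard Fenchel duality theorem from Ekeland--Temam to the decomposition \eqref{eq:primal_problem_decomposed}, namely $\inf_{\bm p \in V_{\Gamma_N}(\Omega)} J(\bm p) + F(\div\: \bm p)$, and then use the explicit computations of the conjugate functions already established. First I would recall that, by \cite[Theorem~III.4.1 and the discussion on p.~61]{ekeland_temam}, strong duality \eqref{eq:strong_duality} holds provided $J$ and $F$ are proper, convex, lower semicontinuous, and there is a point $\bm p_0 \in V_{\Gamma_N}(\Omega)$ at which $J$ is finite and $F$ is finite and continuous at $\div\: \bm p_0$. Here $F$ from \eqref{eq:FF} is continuous and finite everywhere on $L^2(\Omega)$, and $J$ is finite at $\bm p_0 = \bm 0$ (with $J(\bm 0)=0$), so the qualification condition is immediate; convexity and lower semicontinuity of $J$ and $F$ are routine. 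This gives \eqref{eq:strong_duality} with the dual problem \eqref{eq:dual_problem_decomposed}.

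Next I would identify the dual problem explicitly. By \Cref{lemma:polar_F} (i.e.\ \Cref{lemma:polar_F_a} and \Cref{lemma:polar_F_step_4} together), $J^\ast(\div^\ast u) = I_K(u)$, and by \Cref{lemma:polar_G}, $F^\ast(-u) = \tfrac12\|u\|_{L^2(\Omega)}^2 - (f,u)$. Substituting into \eqref{eq:dual_problem_decomposed}, the dual problem becomes
\begin{equation*}
	\inf_{u\in L^2(\Omega)} I_K(u) + \tfrac12\|u\|_{L^2(\Omega)}^2 - (f,u) = \inf_{u \in K} \tfrac12\|u\|_{L^2(\Omega)}^2 - (f,u),
\end{equation*}
which, since $K \subset W^{1,1}_{\Gamma_D}(\Omega) \subset L^2(\Omega)$ in $\mathrm{d}=1$ and the objective coincides with that of \eqref{eq:original_problem_ini} up to the harmless additive constant $-\tfrac12\|f\|_{L^2(\Omega)}^2$ already absorbed, is exactly problem \eqref{eq:original_problem_ini}. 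This establishes the first assertion of the proposition, and \eqref{eq:strong_duality} then reads as stated, with the right-hand side equal to $-$(the value of \eqref{eq:original_problem_ini}).

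For the extremality relation \eqref{eq:duality_rel} I would invoke the characterization of optimal pairs in Fenchel duality (\cite[Proposition~III.4.1, Remark III.4.2]{ekeland_temam}): $\overline{\bm p}$ solves \eqref{eq:predual_problem_ini} and $\overline u$ solves the dual if and only if $-\overline u \in \partial F(\div\: \overline{\bm p})$ and $\div^\ast \overline u \in \partial J(\overline{\bm p})$ (with the sign conventions matched to the perturbation function $\phi$ above). The first inclusion, since $F$ is differentiable with $F'(v) = v - f$, is equivalent to $-\overline u = \div\: \overline{\bm p} - f$, i.e.\ $\overline u = f - \div\:\overline{\bm p}$. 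For the second, I would use that $\overline u \in K \subset W^{1,1}_{\Gamma_D}(\Omega)$, so that integration by parts gives $\langle \div^\ast \overline u, \bm q\rangle = (\overline u, \div\: \bm q) = -\int_\Omega \nabla \overline u \cdot \bm q \dx$ for $\bm q \in V_{\Gamma_N}(\Omega)$ (the boundary term over $\Gamma_D$ vanishing by $\gamma_0(\overline u)=0$ and the one over $\Gamma_N$ by the support condition defining $E(\Omega)$); identifying $\div^\ast \overline u$ with $-\nabla \overline u$ acting by this pairing, and noting that $\partial J$ is unaffected by this identification, yields $\nabla \overline u \in \partial J(\overline{\bm p})$ in the sense written. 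I would also note uniqueness of $\overline u$ (hence well-definedness of the statement) follows from strict convexity of the objective of \eqref{eq:original_problem_ini}, already established in \Cref{thm:exis_original_fun}.

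The main obstacle I anticipate is bookkeeping around the sign conventions and the precise identification of $\div^\ast \overline u$ as $-\nabla \overline u$: one must be careful that the perturbation function $\phi(\bm p, u) = J(\bm p) + F(\div\:\bm p - u)$ produces the dual \eqref{eq:dual_problem_decomposed} with exactly the signs claimed, and that the adjoint $\div^\ast: L^2(\Omega) \to V_{\Gamma_N}(\Omega)^\ast$ composed with the natural embedding of $W^{1,1}_{\Gamma_D}$-gradients is the map $\overline u \mapsto (\bm q \mapsto -\int_\Omega \nabla \overline u \cdot \bm q\,\dx)$, so that the subdifferential inclusion $\div^\ast \overline u \in \partial J(\overline{\bm p})$ can be rewritten in the "$\nabla \overline u \in \partial J(\overline{\bm p})$" form appearing in \eqref{eq:duality_rel}. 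Everything else is a direct citation of the Fenchel duality machinery plus the lemmas already proved.
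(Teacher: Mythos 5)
Your proposal takes exactly the route of the paper's (very terse) proof: apply the Ekeland--Temam framework (Theorem III.4.1, Proposition III.4.1, Remark III.4.2) to the decomposition $\inf_{\bm p} J(\bm p)+F(\div\,\bm p)$, note that the qualification condition is trivially met because $F$ is finite and continuous on all of $L^2(\Omega)$ and $J(\bm 0)=0$, and then substitute $J^\ast(\div^\ast u)=I_K(u)$ (Theorem \ref{lemma:polar_F}) and $F^\ast(-u)=\tfrac12\|u\|_{L^2(\Omega)}^2-(f,u)$ (Proposition \ref{lemma:polar_G}) to recognize the dual as \eqref{eq:original_problem_ini}. Your write-up is more explicit than the paper's, and the identification of the dual problem and of the first extremality relation $\overline u=f-\div\,\overline{\bm p}$ is correct.

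The one step that does not go through as written is the very last one, which you yourself flagged as the main obstacle. You correctly compute that $\div^\ast\overline u$ acts on $\bm q\in V_{\Gamma_N}(\Omega)$ by $\bm q\mapsto-\int_\Omega\nabla\overline u\cdot\bm q\dx$, i.e.\ $\div^\ast\overline u\cong-\nabla\overline u$; but then the extremality relation delivered by Proposition III.4.1, namely $\div^\ast\overline u\in\partial J(\overline{\bm p})$, reads $-\nabla\overline u\in\partial J(\overline{\bm p})$, not $\nabla\overline u\in\partial J(\overline{\bm p})$. Since $\partial J(\overline{\bm p})$ is not a symmetric set when $\overline{\bm p}\neq\bm 0$, the sign cannot simply be dropped, so the sentence ``identifying $\div^\ast\overline u$ with $-\nabla\overline u$ \dots yields $\nabla\overline u\in\partial J(\overline{\bm p})$'' is a non sequitur. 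Note that $-\nabla\overline u\in\partial J(\overline{\bm p})$ is precisely what is consistent with the strong form \eqref{eq:pstrong} used later in the paper: $0\in-\nabla(\div\,\bm p-f)+\partial J(\bm p)$ is equivalent to $-\nabla\overline u\in\partial J(\overline{\bm p})$ once $\overline u=f-\div\,\overline{\bm p}$. So either \eqref{eq:duality_rel} is to be read with the (nonstandard, sign-free) identification of $\div^\ast$ with $\nabla$ left implicit, or it carries a sign slip; in your proof you should state the relation as $\div^\ast\overline u\in\partial J(\overline{\bm p})$ and then make the identification, with its sign, explicit. Everything else is a faithful and complete version of the paper's argument.
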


\begin{proof}
	 As a corollary to Theorems~\ref{lemma:polar_F} and \ref{lemma:polar_G}, it immediately follows that
	 the dual of problem~\eqref{eq:predual_problem_ini} which is given in \cref{eq:dual_problem_decomposed} is identical to problem~\eqref{eq:original_problem_ini}.  Using that $J$ and $F$ are convex and continuous proper functions and bounded from below,
	equality \eqref{eq:strong_duality} and the extremality relation \eqref{eq:duality_rel} follow from the application of Theorem~III.4.1 and Proposition~III.4.1 in \cite[p. 59]{ekeland_temam} in its decomposed form, which is described in Remark~III.4.2 therein, where condition (4.20) is satisfied by any ${\bm p}\in V_{\Gamma_N}(\Omega)$.
\end{proof}

{
	\begin{remark}
	The duality between \eqref{eq:original_problem_ini} and \eqref{eq:predual_problem_ini} holds symmetrically, i.e. \eqref{eq:predual_problem_ini} is the dual to problem \eqref{eq:original_problem_ini} as well. Defining the perturbation function
$\phi: V_{\Gamma_N}(\Omega) \times L^2(\Omega) \to \mathbb{R}\cup\{\infty\}$
by
$\phi(\bm p, u) = J(\bm p) + F(\div\:\bm p - u)$
following the framework  in \cite[pp. 58--60]{ekeland_temam},
\eqref{eq:predual_problem_ini} can be written as
\[
	\inf_{\bm p \in V_{\Gamma_N}(\Omega)} \phi(\bm p, 0)
\]
and the application of \cite[(4.20) in  p. 61]{ekeland_temam} yields
that $\phi$ is 
convex,
l.s.c.,
proper,
and bounded from below,
given that the same holds true for $J$ and $F$. Thus, it follows from \cite[p. 49]{ekeland_temam} that $\phi^{\ast\ast} = \phi$ and that
	\eqref{eq:predual_problem_ini}
	is identical to its bidual problem
	\[
		\inf_{p \in V_{\Gamma_N}(\Omega)} \phi^{\ast\ast}(\bm p, 0),
	\]
	i.e., to the dual problem to \eqref{eq:original_problem_ini},
	{with respect to the perturbation function $\phi^{\ast}$.}
\end{remark}

{
	Note that though we assume $\alpha\in C(\overline\Omega)$,
	results within this section  hold for $\alpha\in L^1(\Omega)$. However recall that the existence result for this case (c.f. \Cref{sec:apfunc}) only stands in the case $\mathrm{d}=1$.
}


\subsubsection{Duality when $\bm p$ is  a measure}\label{Ssec:p_measure}

 {We consider now the duality result in the framework of the variable $\bm p$ in the space of Borel measures with $L^2(\Omega)$ divergences. Surprisingly, the dual problem remains the same. We recall in this framework that}
\begin{equation*}
	\Gamma_N=\emptyset \qquad \text{and}\qquad V_{\Gamma_N}(\Omega)=W(\Omega),
\end{equation*}
as in \eqref{eq:Vacont}, and  $\mathrm{d} \ge 1$. Since we already assumed that $\alpha\in C(\overline{\Omega})$ is positive, existence of a unique solution  {follows from} 
Theorem~\ref{thm:unique_existence_predual_p_measure}.

We again propose to follow the Fenchel dual approach and let $J: V_{\Gamma_N}(\Omega)\to\RR$ and $F: L^2(\Omega)\to\RR$ be
	\begin{equation*}
	J(\bm p) := \int_\Omega \alpha \:\mathrm{d}|\bm p|_q
	\qquad\text{and}\qquad
	F(v) := \frac12 \int_\Omega |v(x) - f(x)|^2\dx.
	\end{equation*}
In this setting, it also holds that for every $u\in L^2(\Omega)$, we have
		$J^\ast(\div^\ast\, u) = I_K(u)$, i.e., \Cref{lemma:polar_F}. In fact, we show that   \Cref{lemma:polar_F_a} and \Cref{lemma:polar_F_step_4} remain true under the functional analytic setting of this section.

\begin{proof}[Proof of \Cref{lemma:polar_F_a}]
	\begin{enumerate}
		\item [(\ref{lemma:polar_F_a1})]

	By choosing $\dif \bm p= \hat{ \bm p}(x) \dx$ with $\hat{ \bm p}\in C_0^1(\Omega)^{\mathrm{d}}$ and $|\hat{\bm p}|_{q}\leq 1$, we obtain the same inequality as \eqref{eq:Ineq1}.  {Moreover, by following similar steps as before, we can} show that $u\in \BV(\Omega)$ 
	
		\item [(\ref{lemma:polar_F_a2})]
		The proof follows identically as before by considering $\dif \bm p_\epsilon= \hat{ \bm p}_\epsilon(x) \dx$ with $\hat{ \bm p}_\epsilon\in C_0^1(\Omega)^{\mathrm{d}}$.
	
		\item [(\ref{lemma:polar_F_a2regular})]
		The same proof applies.
	
		\item [(\ref{lemma:polar_F_a3})]
		Note that $|\nabla u|\leq \alpha $ a.e. implies that {$u\in W^{1,\infty}(\Omega)$, given that $\alpha\in 
		C(\overline{\Omega})$.} As shown before, in \Cref{rem:Jstar_prereq}, $J^\ast(\div^\ast u)<+\infty$ implies $J^\ast(\div^\ast u)=0$ which yields  for $\dif \bm p= \hat{ \bm p}(x) \dx$ with $\hat{ \bm p}\in C^1(\overline{\Omega})^N$ the following
		\begin{align*}
		-\int_\Omega \hat{\bm p}(x)  \cdot \,\nabla u(x) \dx + \int_{ {\partial\Omega}} u \hat{\bm p} \cdot \nu\, \text{d}\mathcal{H}^{ {\mathrm{d}-1}}- \int_\Omega \alpha(x){   | \hat{\bm p}(x)|_q} \dx \leq 0 ,\; \text{for all}\ \hat{\bm p} \in C^1(\overline{\Omega})^N,
		\end{align*}
		and the proof follows identically leading to $u|_{\Gamma_D}=0$.
	\end{enumerate}
	\end{proof}
	
		\begin{proof}[Proof of \Cref{lemma:polar_F_step_4}]
			Let $u\in K$, then from the definition of $K$, it follows that $u\in W^{1,1}_{0}(\Omega)$ and $|\nabla u|_p\leq \alpha$.  {Furthermore, } {
		\begin{equation*}
		\begin{aligned}
					J^\ast(\div^\ast\, u) &= \sup_{\bm p\in V_{\Gamma_N}(\Omega)}\left\{-\int_\Omega  \,\nabla u \cdot \dif \bm p   + \mathrm{N}_{\bm p}(u|_{\partial \Omega}) - \int_\Omega \alpha \:\mathrm{d}|\bm p|_q\right\}\\
				&= \sup_{\bm p\in V_{\Gamma_N}(\Omega)}\left\{-\int_\Omega \,\nabla u\cdot \dif \bm p   - \int_\Omega \alpha \:\mathrm{d}|\bm p|_q\right\}\\
				&\leq \sup_{\bm p\in V_{\Gamma_N}(\Omega)}\left\{\int_\Omega |\nabla u|_p\:\mathrm{d}|\bm p|_q   - \int_\Omega \alpha \:\mathrm{d}|\bm p|_q\right\}\\
				&= \sup_{\bm p\in V_{\Gamma_N}(\Omega)}\left\{\int_\Omega (|\nabla u|_p-\alpha) \:\mathrm{d}|\bm p|_q\right\}\\
														&\leq 0
		\end{aligned}
		\end{equation*}}
i.e., it follows that $J^\ast(\div^\ast u)=0$. The proof is complete.
	\end{proof}

From \Cref{lemma:polar_F},  it follows that  the duality result of \Cref{prop:duality} also holds in this setting; the proof is straightforward.

\subsection{The case when \texorpdfstring{$\alpha$}{alpha} is a measure
} \label{Sec:3final}

	In this section, we will extend the duality result of \Cref{prop:duality} by letting $\alpha$ to be a non negative Borel measure, that is, $\alpha\in \mathrm{M}^+(\Omega)$. {However, $\bm p$ is a function in this setting.}
	In its more general form, in  problem~\eqref{eq:predual_problem_ini}, we set
	\begin{equation}\label{eq:predual_problem_gen_J}
		J({\bm p}) = \int_\Omega |\bm p|_q \dif\alpha \, .
	\end{equation}
	 {The results in this subsection are} a generalization of the case of the Lebesgue integrable constraint $\alpha$, that was presented in \Cref{Ssec:pre_dual_function}.
	 {We shall assume that $\bm p \in V_{\Ga_N}(\Omega)$, see \cref{eq:V_gamma_N_defn} for the definition of 
	$V_{\Ga_N}(\Omega)$.}
	
	Since $\alpha\in \mathrm{M}^+(\Omega)$, as we discussed in \Cref{Sec:1}, we let
	\begin{equation}
		U_{\Ga_D}(\Om)=\BV_{\Ga_D}(\Om) \qquad \text{and}\qquad G=D,
	\end{equation}
the distributional gradient, and hence
	\begin{equation*}
	K= \{v\in \BV_{\Gamma_D}(\Omega) : |\mathrm{D} v|_{p}\leq\alpha\}.
\end{equation*}
	We prove that the dual problem to \eqref{eq:predual_problem_ini} is given by
	\cref{eq:original_problem_ini}	with inequality {constraint $|\mathrm{D}u|_q\le \alpha$} being understood {in the sense of \cref{eq:grad_constraint_measure_Cbd}}.
	
	Recall that in \cref{Sec:existence} we  have shown  existence and uniqueness of solution to \Cref{eq:original_problem_ini}. Here, we show that dual of problem \eqref{eq:predual_problem_ini} is given by \eqref{eq:original_problem_ini}. We start by writing \cref{eq:predual_problem_ini} as
	\[ \inf_{\bm  p\in V_{\Ga_N}(\Omega) }  {F(\div\:\bm p)} + J(\bm p), \]  
	with $J: V_{\Ga_N}(\Omega)\to\RR$ as in \eqref{eq:predual_problem_gen_J} and $F: L^2(\Omega)\to\RR$, as before, given by
	\begin{equation*}
	F(v) := \frac12 \int_\Omega |v(x) - f(x)|^2\dx.
	\end{equation*}
	We prove now that Theorem~\ref{lemma:polar_F} holds also true in the current setting.  {For brevity, we only discuss the essential modifications needed in} \Cref{lemma:polar_F_a,lemma:polar_F_step_4}.

	\begin{proof}[Proof of Theorem~\ref{lemma:polar_F}] 
		This proof  follows along the same lines as the proof to Theorem~\ref{lemma:polar_F}. 
		We start by observing that the discussion in  {Remark~\ref{rem:Jstar_prereq}} holds  {in the current setting as well, i.e.,} $J^\ast(\div^\ast\:u)$ only takes the values $0$ and $+\infty$. We now prove the result.

		The proof that $J^\ast(\div^\ast u)=0$ implies that $u \in K$
		follows along the lines of \Cref{lemma:polar_F_a}. {Indeed \eqref{lemma:polar_F_a1} and \eqref{lemma:polar_F_a2} in \Cref{lemma:polar_F_a}} apply directly, and  {for} \eqref{lemma:polar_F_a3} everything follows in the same way, when $\mathrm{D}u$ and $\dif\alpha$ are measures instead of the functions $\nabla u$ and  $\alpha(x)$.

		 {On the other hand, the converse (\Cref{lemma:polar_F_step_4}), i.e.,} $u\in K$ implies that $J^\ast(\div^\ast u)=0$  follows from the calculations below. 
		 {Recall that if $u\in K$, then} $u\in \BV_{\Gamma_D}(\Omega)$ and $|\mathrm{D} u|_{p} ~\le~\al$ in the sense of \eqref{eq:grad_constraint_measure_Cbd}.  Therefore,
			\begin{equation*}
			\begin{aligned}
			0\le J^\ast(\div^\ast\, u) &= \sup_{\bm p\in \Va}\left\{\langle \div^\ast\, u, \bm p \rangle_{ {(\Va)^*,\Va}} - \int_\Omega {|\bm p|_{q}}\text{d}\alpha\right\}\\
																 &= \sup_{\bm p\in C^1(\overline\Omega)^{ {\mathrm{d}}} \cap \Va}\left\{ {-\int_{\Omega} \bm p\, \mathrm{D}u}  - \int_\Omega{|\bm p|_{q}} \text{d}\alpha+ \int_{\Ga_D} \ga_0(u) \bm p\cdot \nu \, \textup{d}\,\mathcal{H}^{ {\mathrm{d}-1}}\right\}\\
																 &= \sup_{\bm p\in C^1(\overline\Omega)^{ {\mathrm{d}}} \cap \Va}\left\{ {-\int_{\Omega} \bm p\, \mathrm{D}u}  - \int_\Omega{|\bm p|_{q}} \text{d}\alpha \right\}\\
			&\le \sup_{\bm p\in C^1(\overline\Omega)^{ {\mathrm{d}}} \cap \Va}\left\{\int_{\Omega} |\bm p|_q |\text{D}u|_p  - \int_\Omega{|\bm p|_q} \text{d}\alpha \right\}\leq 0 ,
			\end{aligned}
			\end{equation*}
	 {and the proof is complete.}		
	\end{proof}

Finally, note that it follows identically as before that the polar function of $F$ is given by
		\begin{equation}\label{eq:G_star_gen}
		F^\ast(u) : = \frac12\|u\|_{L^2(\Omega)}^2 - (f,u)_{ }.
		\end{equation} 
		Hence, the duality result of proposition \ref{prop:duality} also holds in the case where $\alpha$ is a measure, with $\nabla$ replaced by $\mathrm{D}$.
 
	\section{A Finite Element Method with Applications} \label{Sec:5final}

	The purpose of this section is to illustrate the applicability of the proposed primal-dual approach to solve Problems \eqref{eq:original_problem_ini} and \cref{eq:predual_problem_ini}. We assume throughout this section that $p=q=2$. 
	
	Recall that Problem \eqref{eq:original_problem_ini} in the case that $\alpha\in L^\infty(\Omega)^+$ is given by
	\begin{equation}	\label{eq:Umod}
			\min \frac12\int_\Om|u(x)|^2\dx - \int_\Omega f(x) u(x) \dx \quad \text{ over } u\in W^{1,\infty}_{\Gamma_D}(\Omega),\quad \text{ s.t. }  |\nabla u|_2\leq \alpha \: \text{ a.e. }
	\end{equation}
and that the pre-dual problem \cref{eq:predual_problem_ini}  is given by
	\begin{equation}
	\label{eq:Dmod}
	\min \frac12 \| \div\: \bm p - f \|_{L^2(\Omega)}^2 + \int_\Omega \alpha(x) | \bm p(x)|_{2}\dif x  \quad \text{ over } \bm p \in V_{\Ga_N}(\Omega)\, .
	\end{equation}
	Now the first order (necessary and sufficient) optimality condition corresponding to \eqref{eq:Dmod} in the strong form is given by: Find $\bm p:\Omega\to \mathbb{R}^\mathrm{d}$ satisfying 
	\begin{equation}\label{eq:pstrong}	
	\begin{aligned}
	-\nabla  \left( \div\: \bm p - f \right) + \partial \left( \| \alpha |\bm p|_{2} \|_{L^1(\Omega)} \right)   &\ni 0  \quad \mbox{in } \Omega, \\
	\bm p \cdot \nu &= 0 \quad \mbox{on } \Gamma_N,
	\end{aligned}				
	\end{equation}
	where $\partial$ denotes the subdifferential operator. In order to solve \eqref{eq:pstrong}, recall from the extremality conditions \eqref{eq:duality_rel}, that if $u^*$ and $\bm p^*$ are solutions to \eqref{eq:Umod} and \eqref{eq:Dmod}, respectively, they satisfy
	\begin{equation}\label{eq:u}
	u^* := -\div\: \bm p + f \quad \mbox{a.e. in } \Omega  \, .
	\end{equation}
	Then, a primal-dual system arises from \eqref{eq:pstrong} and \eqref{eq:u}, which in the weak form becomes the following variational inequality of second kind: Find $(\bm p, u) \in V_{\Ga_N}(\Omega) \times L^2(\Omega)$ such that  
	\begin{align}
	\left(u,-\div\, (\mathbf{v} - \bm p)\right) + \int_\Omega \alpha(x) |\mathbf{v}(x)|_{2}\dif x - \int_\Omega \alpha(x) |\bm p(x)|_{2}\dif x 
	&\ge 0 & &\mbox{for all } \mathbf{v} \in V_{\Ga_N}(\Omega), \label{eq:psad} \\
	(u,w) + (\div\: \bm p , w) &= (f,w) & &\mbox{for all } w \in L^2(\Omega) \, .	\label{eq:usad}
	\end{align}
	Due to their nonlinear and nonsmooth nature, it is challenging to solve \eqref{eq:psad}--\eqref{eq:usad}. 
	
	We shall proceed by introducing the Huber-regularization for $\phi(\bm p) := |\bm p|_{2}$ in the last term under the integral in \eqref{eq:Dmod}. This regularization is $C^{1}$ with piecewise differentiable first order derivative. Therefore one can use Newton type methods to solve the resulting regularized system.  For a given parameter $\tau > 0$, the Huber regularization of $\phi$ is given by 
	\[
	\phi_{\tau}(\bm p) 
	:= \left\{ 
	\begin{array}{ll}
	|\bm p|_{2} - \frac12 \tau,  & |\bm p|_{2} > \tau,\\
	\frac{1}{2\tau} |\bm p|_{2}^2, & |\bm p|_{2} \le \tau \, .
	\end{array}	
	\right.	
	\]
	As $\tau \rightarrow 0$, $\phi_\tau(\bm p)  \rightarrow \phi(\bm p)$. Moreover, $\phi_\tau(\cdot)$ is continuously differentiable with derivative given by 	
	\[
	\phi_\tau'(\bm p)
	:= \left\{ 
	\begin{array}{ll}
	\frac{\bm p}{|\bm p|_{2}},  & |\bm p|_{2} > \tau \\
	\frac{1}{\tau} \bm p, & |\bm p|_{2} \le \tau \, .
	\end{array}	
	\right.	
	\]

	Replacing $\phi(\cdot) = |\cdot|_{2}$ in \eqref{eq:Dmod} by $\phi_\tau(\cdot)$, the regularized primal-dual system corresponding to \eqref{eq:psad}--\eqref{eq:usad} is given by
	\begin{align}
	(u,-\div\, \mathbf{v}) + \int_\Omega \alpha \phi_\tau'(\bm p) \cdot \mathbf{v}
	&= 0,   & & \mbox{for all } \mathbf{v} \in V_{\Ga_N}(\Omega), \label{eq:psadr} \\
	(u,w) + (\div\: \bm p , w) &= (f,w), & & \mbox{for all } w \in L^2(\Omega). 	\label{eq:usadr}
	\end{align}	
	Notice, that $\phi_\tau'(\cdot)$ is piecewise differentiable and the second order derivative is given by 
	\[
	\phi_\tau''(\bm p) = 
	:= \left\{ 
	\begin{array}{ll}
	\frac{1}{|\bm p|_{2}} \left( I_{d\times d} - \frac{\bm p \bm p^\top}{|\bm p|_{2}^2} \right),  & |\bm p|_{2} > \tau\,, \\
	\frac{1}{\tau} I_{d\times d},  & |\bm p|_{2} \le \tau \, ,
	\end{array}	
	\right.	
	\]
	where $I_{d \times d}$ is the $d\times d$ identity matrix.
	
	\subsection{Finite Element Discretization}		
	
	We discretize $\bm p$ and $u$ using the lowest order Raviart-Thomas ($\mathbb{RT}_0$) and piecewise constant $(\mathbb{P}_0)$ finite elements, respectively. Whenever needed, the integrals are computed using Gauss-quadrature which is exact for polynomials of degree less than equal to 4. For each fixed $\tau$, we solve the discrete saddle point system \eqref{eq:psadr}--\eqref{eq:usadr} using Newton's method with backtracking line-search strategy. We stop the Newton iteration when each residual in $L^2(\Omega)$-norm is smaller than $10^{-8}$.  Each linear solve during the Newton iteration is done using direct solve. Starting from $\tau = 10$, a continuation strategy is applied where in each step we reduce $\tau$ by a factor of 1.30 until $\tau$ is less than equal to $10^{-6}$. We initialize the Newton's method by zero. To compute solution for next $\tau$, we use the solution corresponding to previous $\tau$ as the initial iterate for the Newton's method.
	
	\subsection{Numerical Examples}	
	Next, we report results from various numerical experiments. In all examples we consider $\Omega=(0,1)\times(0,1)$ and we assume that $\Gamma_N = \emptyset$, $\Gamma_D=\partial\Omega$, and hence   
			pure Dirichlet boundary conditions on $u$ on the entire boundary are set. In the  {first example}, we construct exact solutions $(\bm p,u)$ when $f$ and $\alpha$ are constants. We compare these exact solutions with our finite element approximation. These experiments validate our finite element implementation for constant $\alpha$ and $f$ and provide optimal rate of convergence.  {Additionally, we} solve \eqref{eq:predual_problem_ini} and \eqref{eq:original_problem_ini} first for a fixed $\alpha$ and vary $f$ and next we fix $f$ and vary $\alpha$. In our second  experiment, we consider a more generic $f$ with different features such as cone, valley and flat regions. In our final experiment, we consider $\alpha$ to be a measure.

	\begin{example}
		{\rm			
			Note initially that if $\alpha$ and $f$ are constants, it is possible to calculate an exact solution. By setting
			\[
				m(x) := \min\left\{ f, \alpha(x-1), \alpha(x-0)\right\},
			\]
			the exact $u$ and $\bm p$ are given as:
			\[
				u(x, y) = \min\{ m(x), m(y) \},
			\]
			and
			\[
				\bm p(x, y) = \begin{cases}
					\left(
						\frac{1}{\alpha}(m(y) - m(x)) \mathrm{sgn}(0.5 - x)
						\left( f - \frac{1}{2}\left(m(x)+m(y)\right)\right),
						0
					\right)^\top,&\text{if }|x-0.5|>|y-0.5|,\\
					\left(
						0,
						\frac{1}{\alpha}(m(x) - m(y)) \mathrm{sgn}(0.5 - y)
						\left( f - \frac{1}{2}\left(m(x)+m(y)\right)\right)
					\right)^\top,&\text{otherwise}.
				\end{cases}
			\]

			Notice that in this example, $u$ is again Lipschitz continuous. In Figure~\ref{f:exact_ex2} (top panel), we have shown the 
			$\|\bm p-\bm p_h\|_{L^2(\Om)}$ and $\|u - u_h\|_{L^2(\Om)}$ when $\Om = (0,1)^2$, $f = 1$, and $\alpha =1$.
			We observe optimal rate of convergence in both cases. In the bottom row, the left panel shows $u_h$, the middle panel
			shows $|\nabla u_h|_{2}$, and the right panel shows $\bm p_h$. We observe that, in this example, the gradient 
			constraints are active in the entire region. Notice, that at the corners (which are sets of measure zero), gradient is 
			undefined. 
			
			\begin{figure}[htb]
				\centering
				\includegraphics[width=0.3\textwidth]{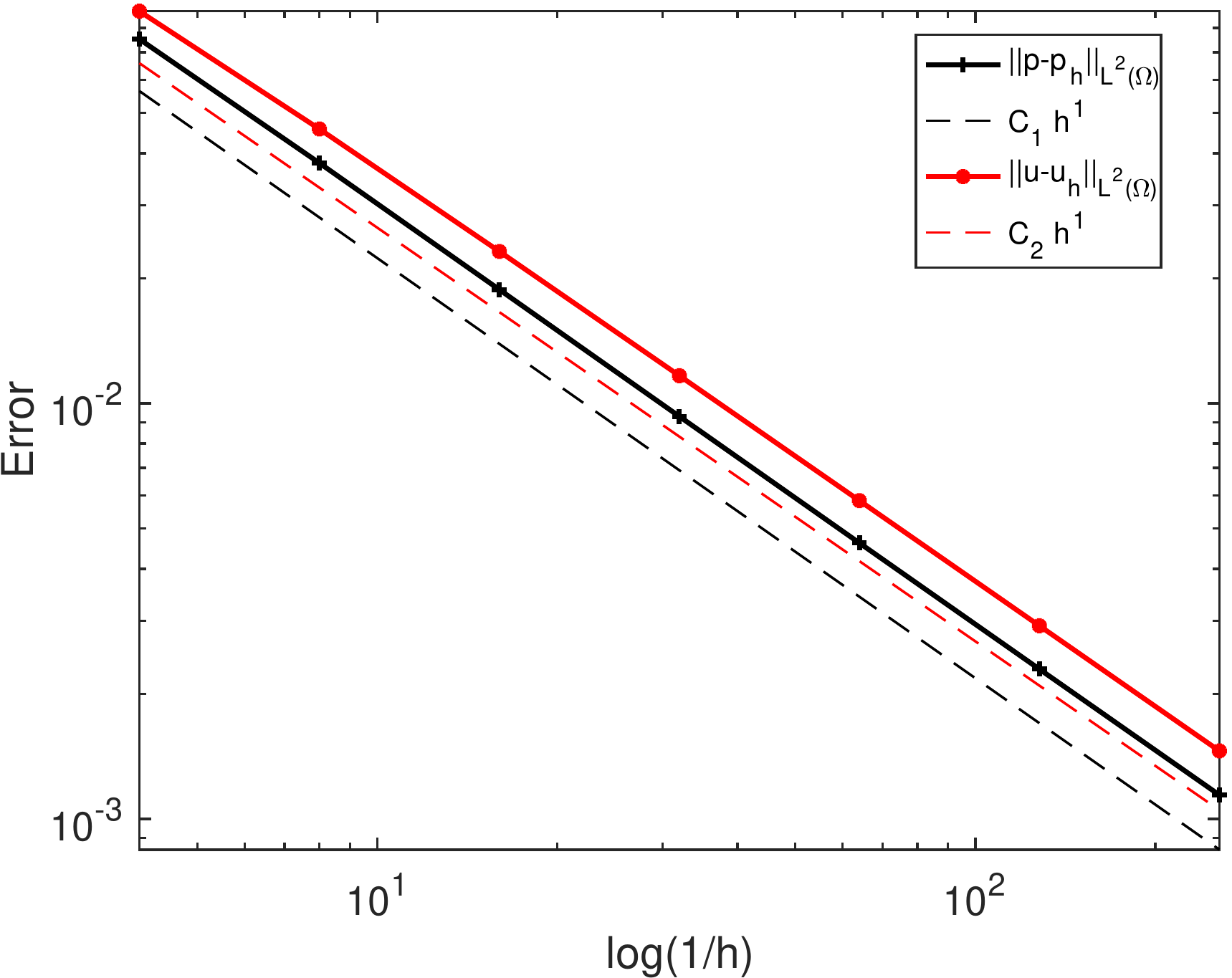} \\
				\includegraphics[width=0.32\textwidth]{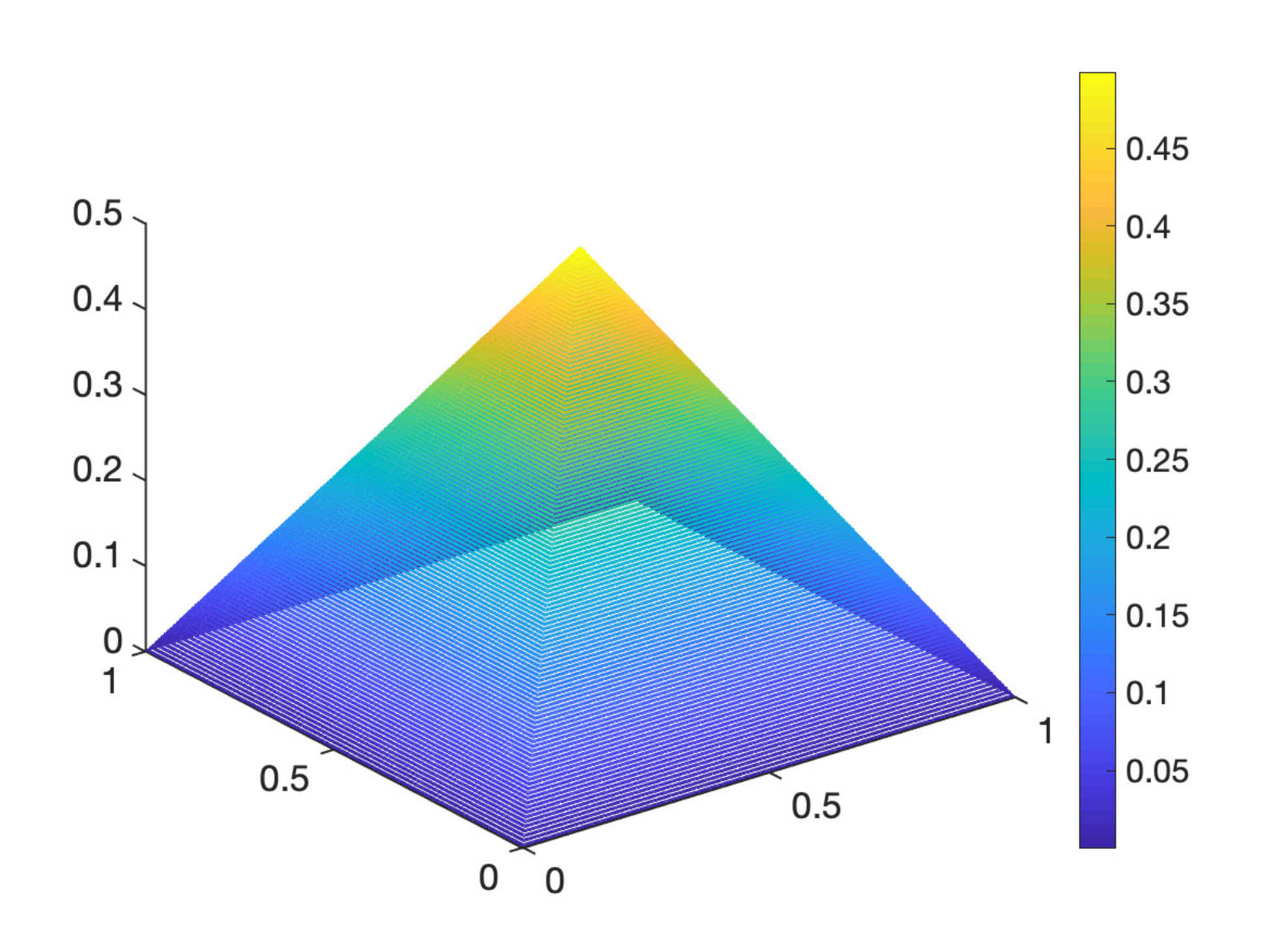}
				\includegraphics[width=0.32\textwidth]{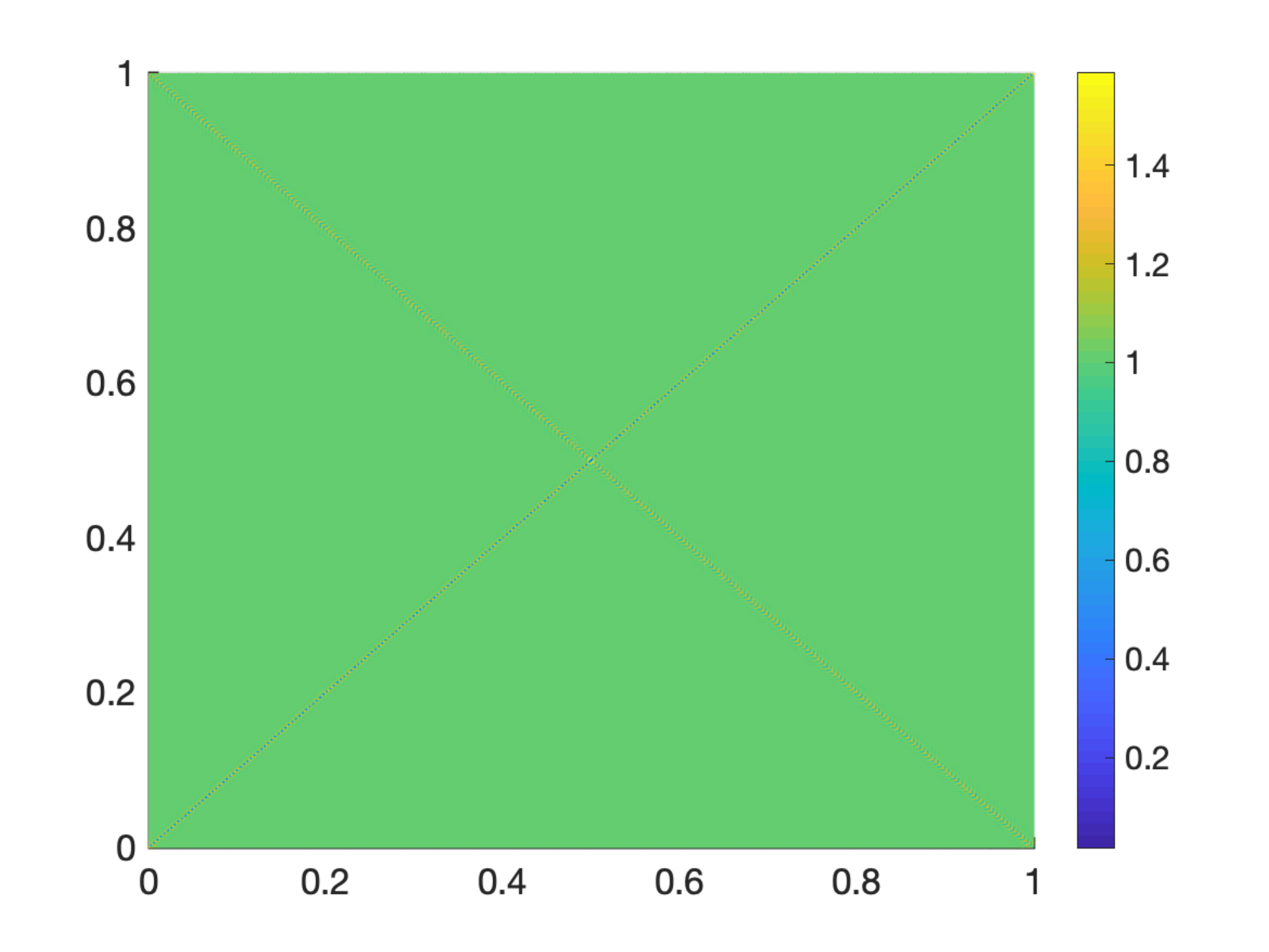}	
				\includegraphics[width=0.32\textwidth]{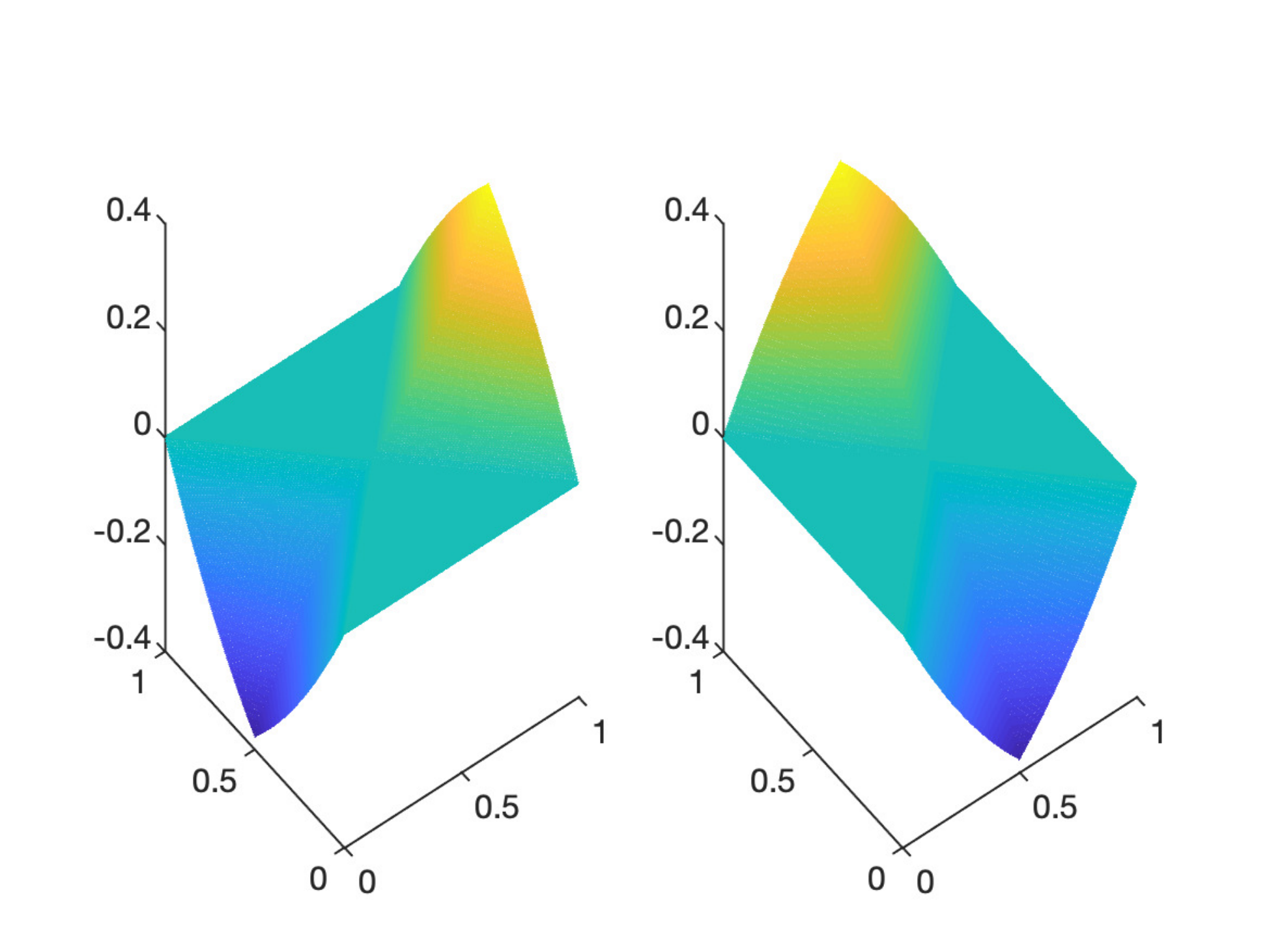}	
				\caption{{\bf Example 1:} \emph{Top panel} - We have shown the $L^2(\Om)$-error between the computed solution 
					$(u_h,\bm p_h)$ and the exact solution $(u,\bm p)$. The optimal linear rate of convergence is observed.
					Bottom panel: Computed $u_h$ (left), $|\nabla u_h|_{2}$ (middle), $\bm p_h$ (right). Notice that we are 
					touching the constraints in the entire region, except where the gradient is undefined. We have omitted the plots of
					the exact $(u,\bm p)$ as they look exactly same as $(u_h,\bm p_h)$.}
				\label{f:exact_ex2}
			\end{figure}

			Next, we fix the number of $\bm p_h$ and $u_h$ unknowns to be 197,120 and 131,072, respectively.  
			Figure~\ref{f:ex_2_1} shows  our results for 3 
			different experiments. In all cases, we have used a fixed $\alpha = 1$. The rows correspond to 
			$u_h, |\nabla u_h|_{2}$, and $\bm p_h$. Each column correspond to $f = 1$, $f = 0.25$, and 
			$f = 0.1$. As expected, for a large value of $f$, we observe steep slope, but for smaller values of
			$f$, plateau regions appear. We also observe that the active region shrinks as $f$ decreases since
			the gradient is zero at the top of the plateau. The dual variable $\bm p_h$ also changes significantly 
			with $f$. 
			
			\begin{figure}[htb]
				\centering
				\includegraphics[width=0.32\textwidth]{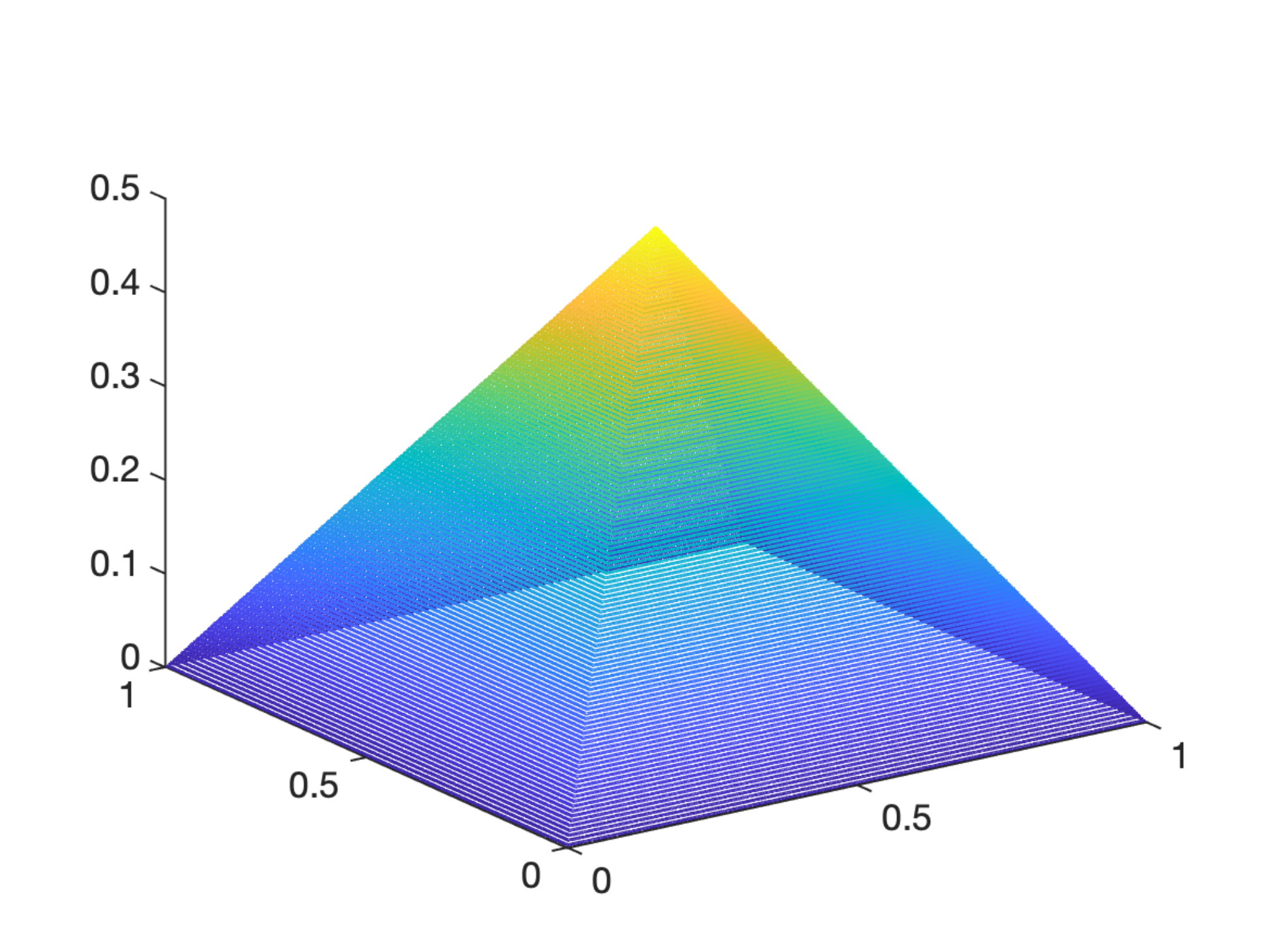}
				\includegraphics[width=0.32\textwidth]{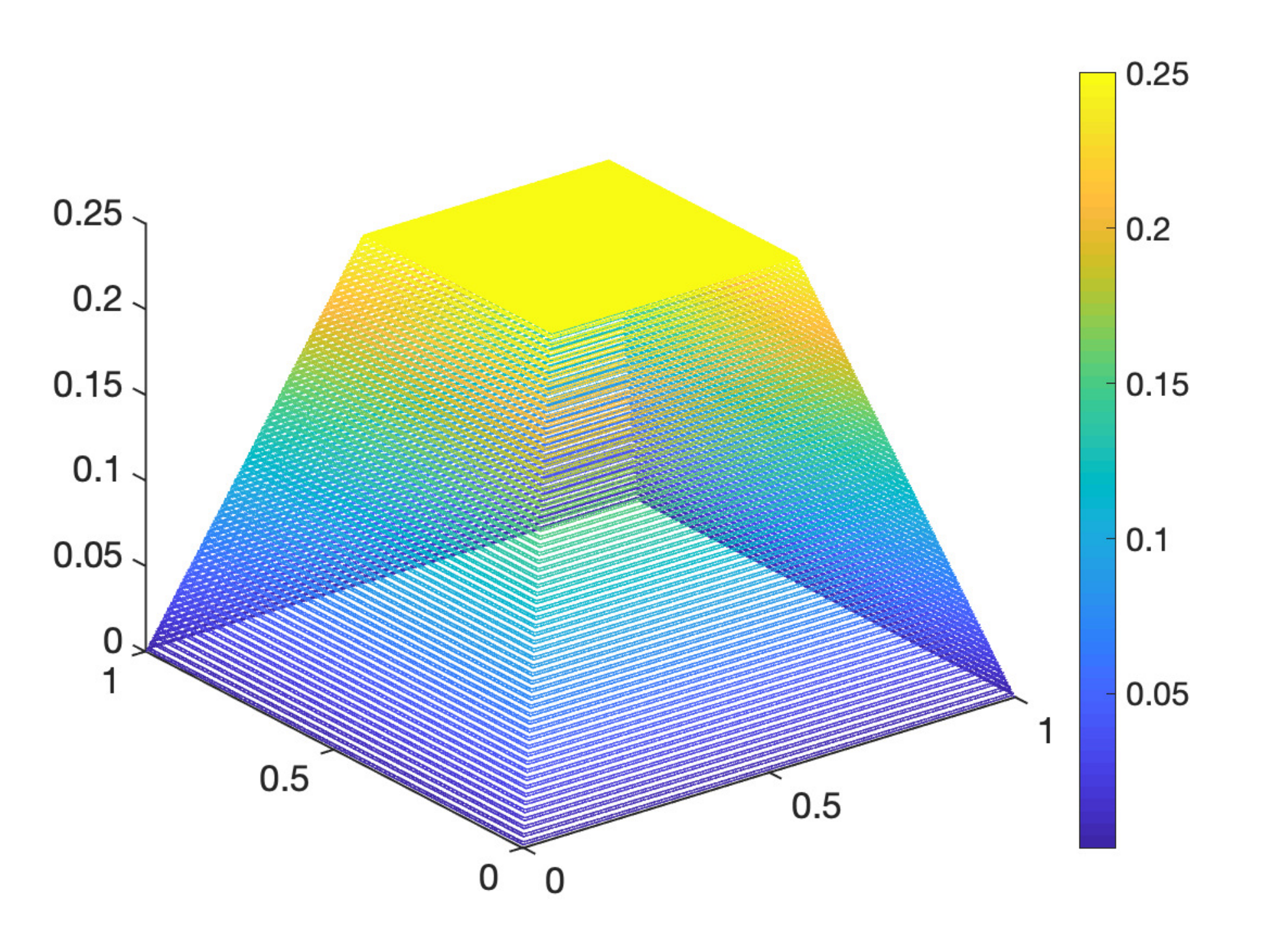}	
				\includegraphics[width=0.32\textwidth]{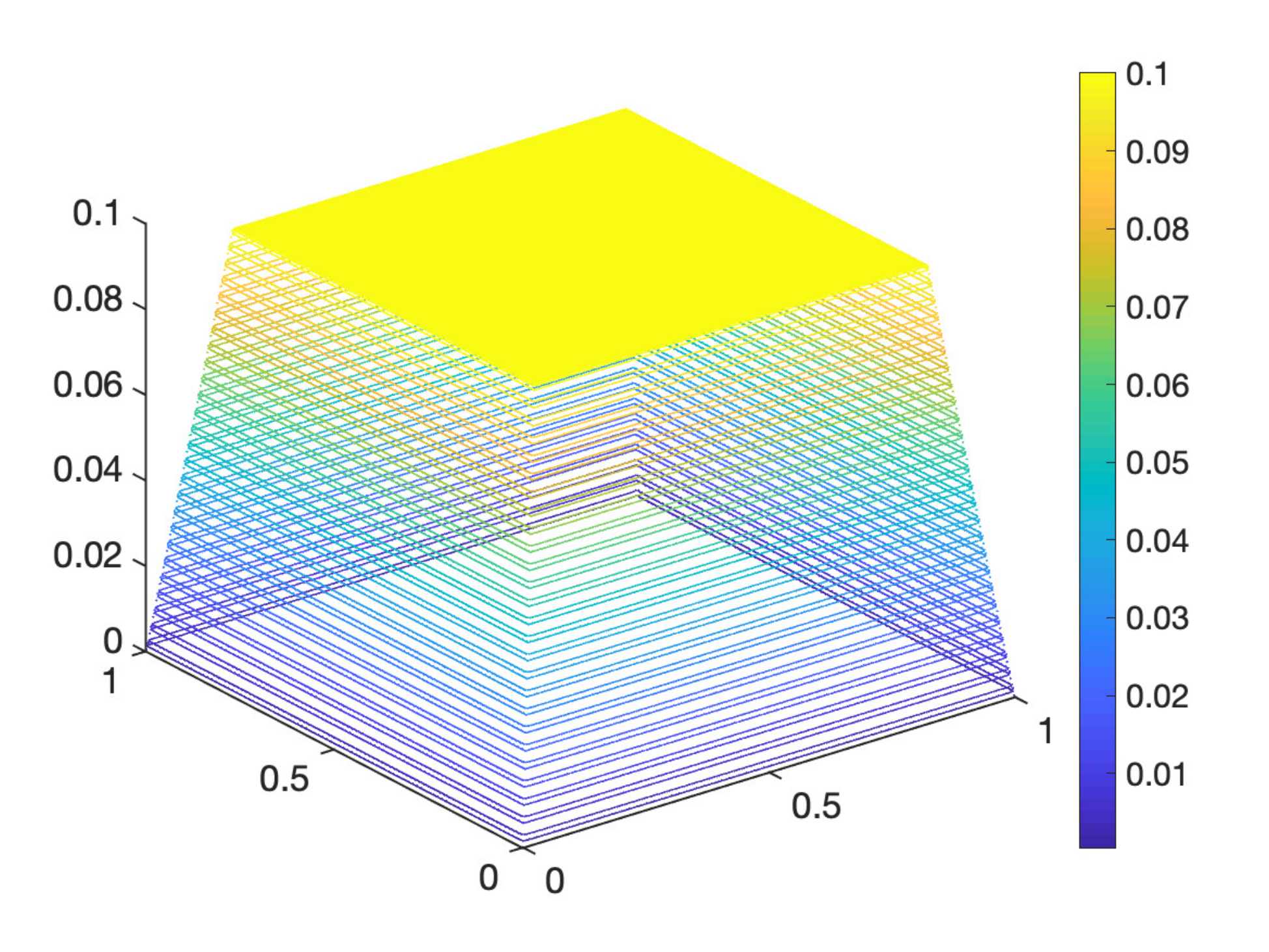}			
				\includegraphics[width=0.32\textwidth]{figures/ex_2/gu_mesh7_f_1_a_1_ex2}		
				\includegraphics[width=0.32\textwidth]{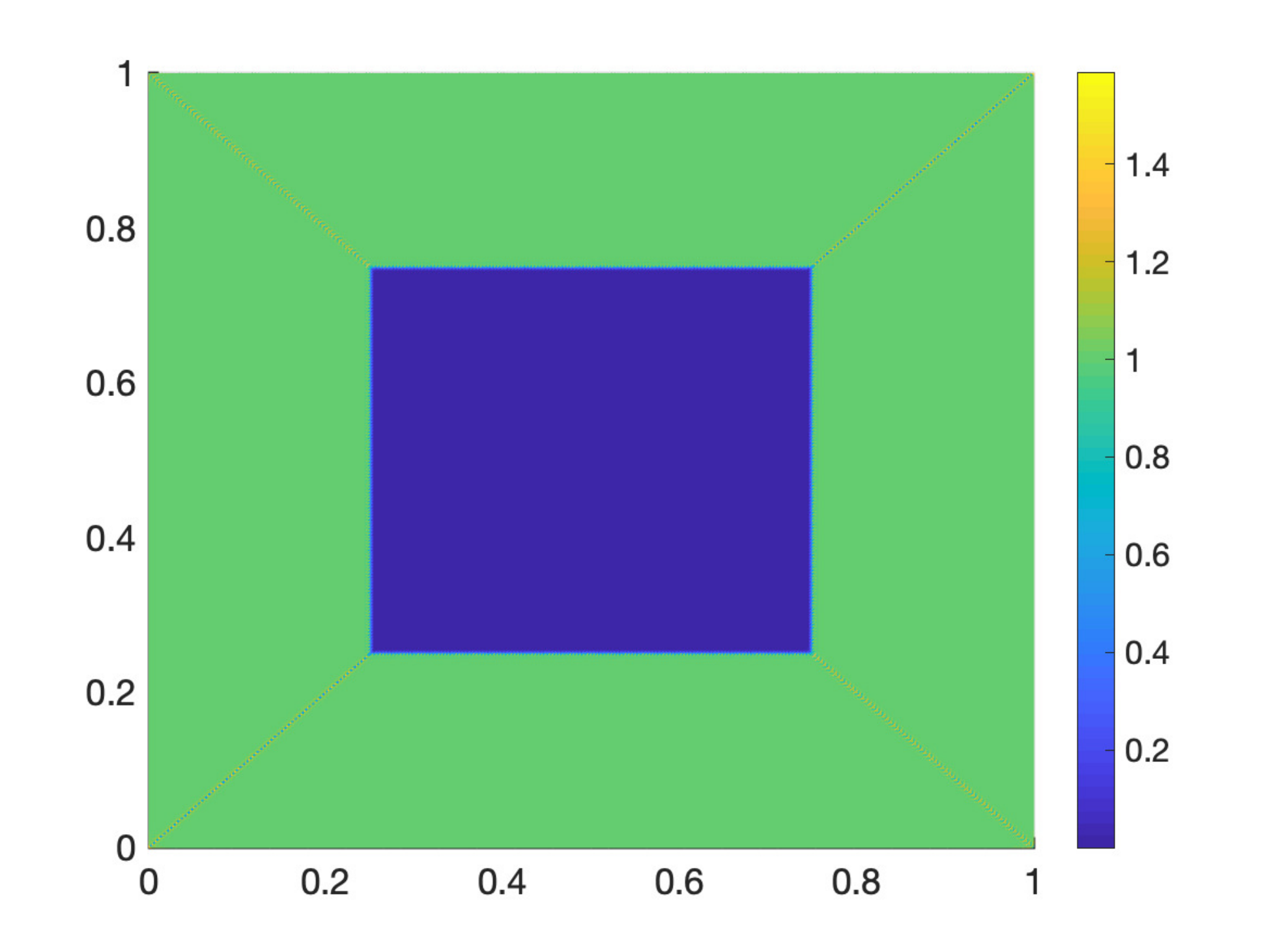}		
				\includegraphics[width=0.32\textwidth]{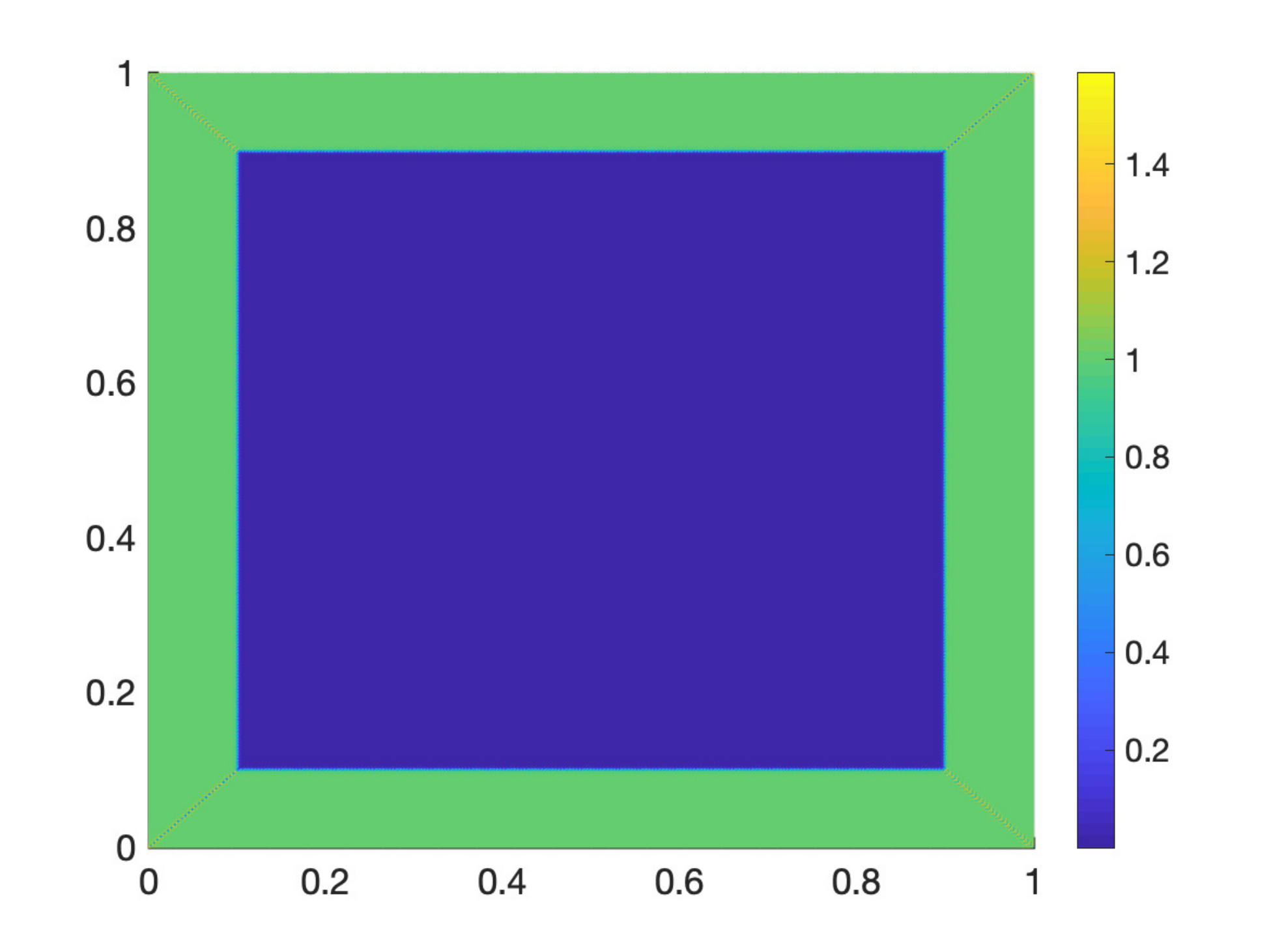}			
				\includegraphics[width=0.32\textwidth]{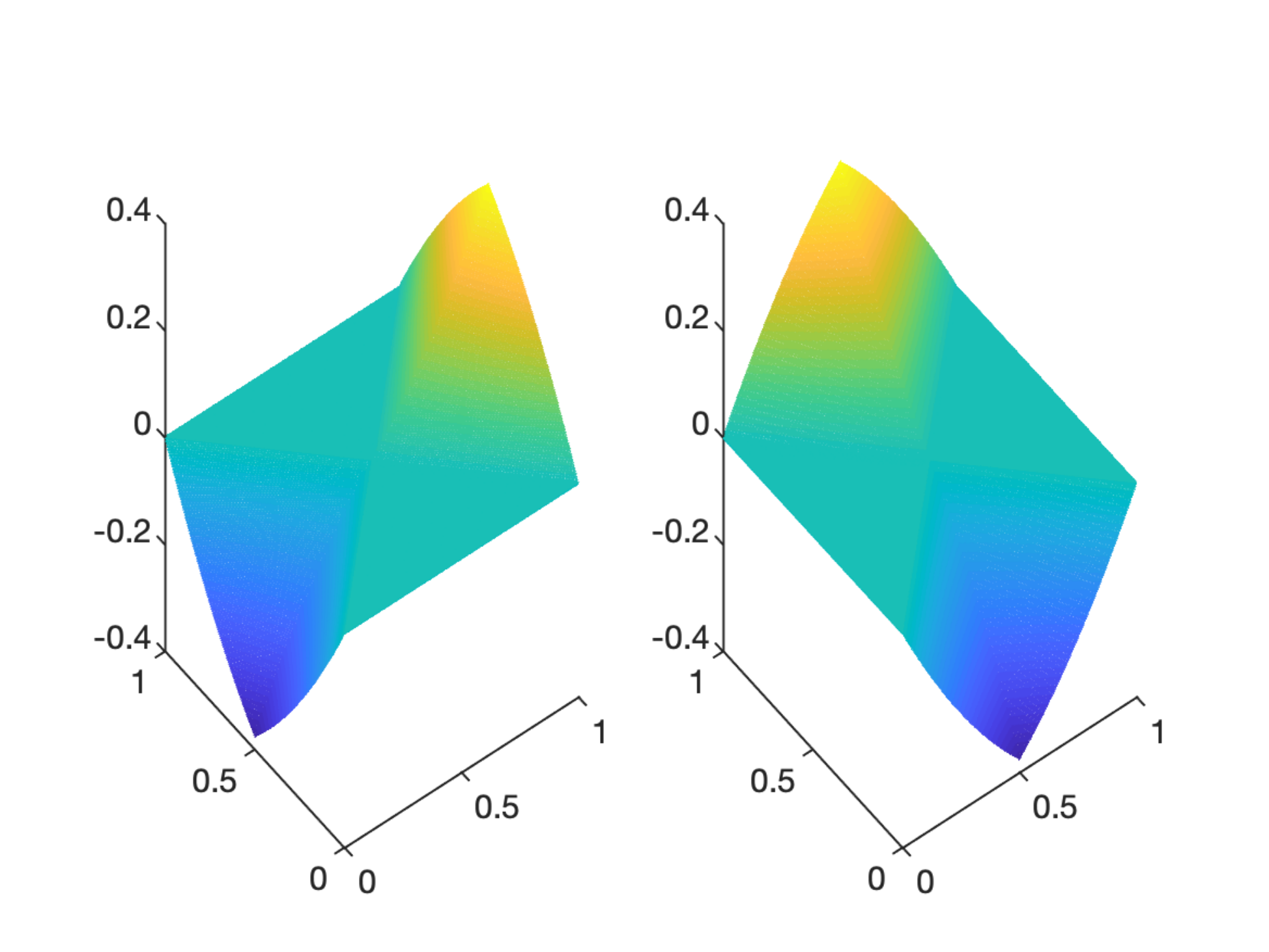}
				\includegraphics[width=0.32\textwidth]{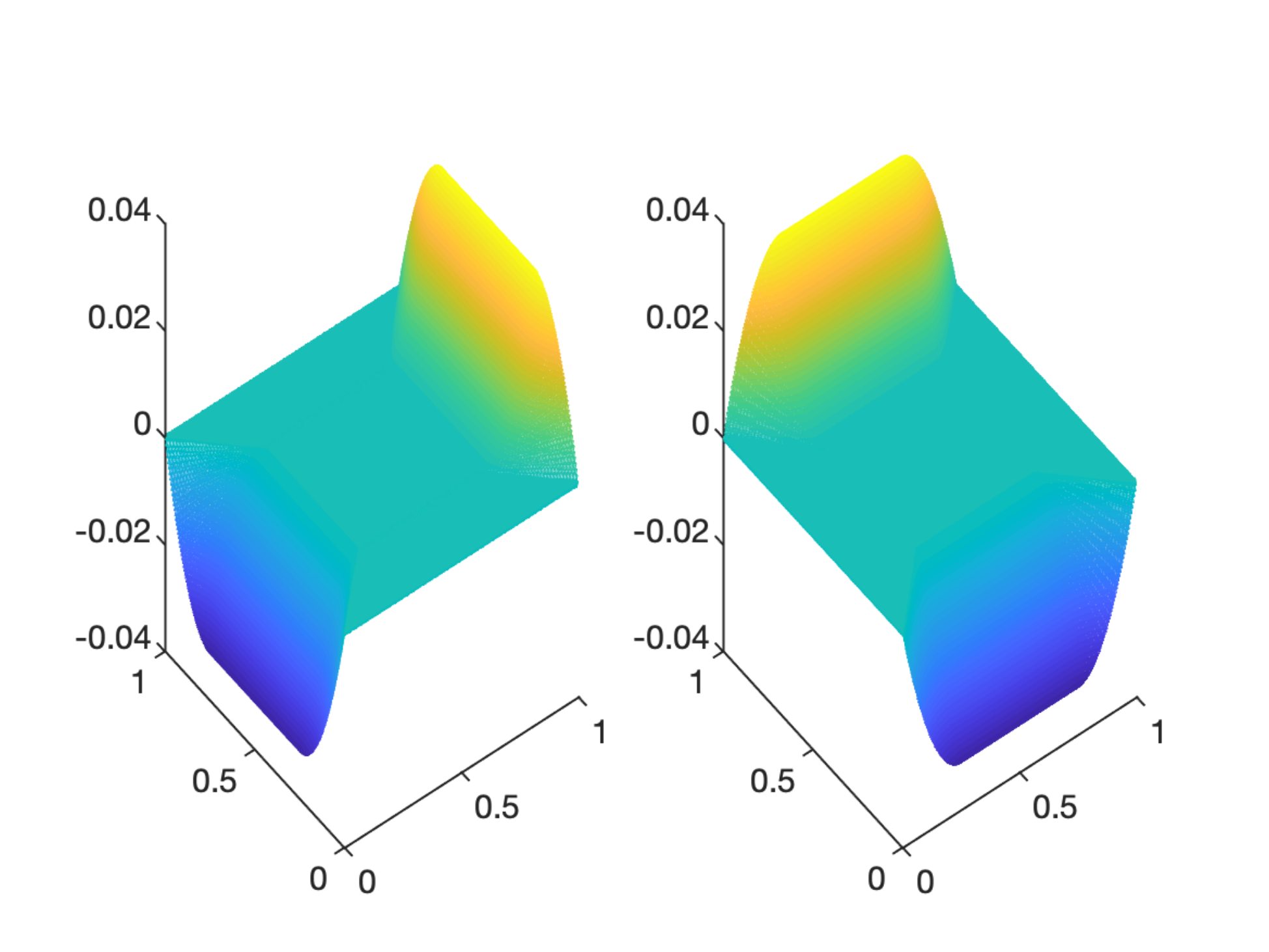}	
				\includegraphics[width=0.32\textwidth]{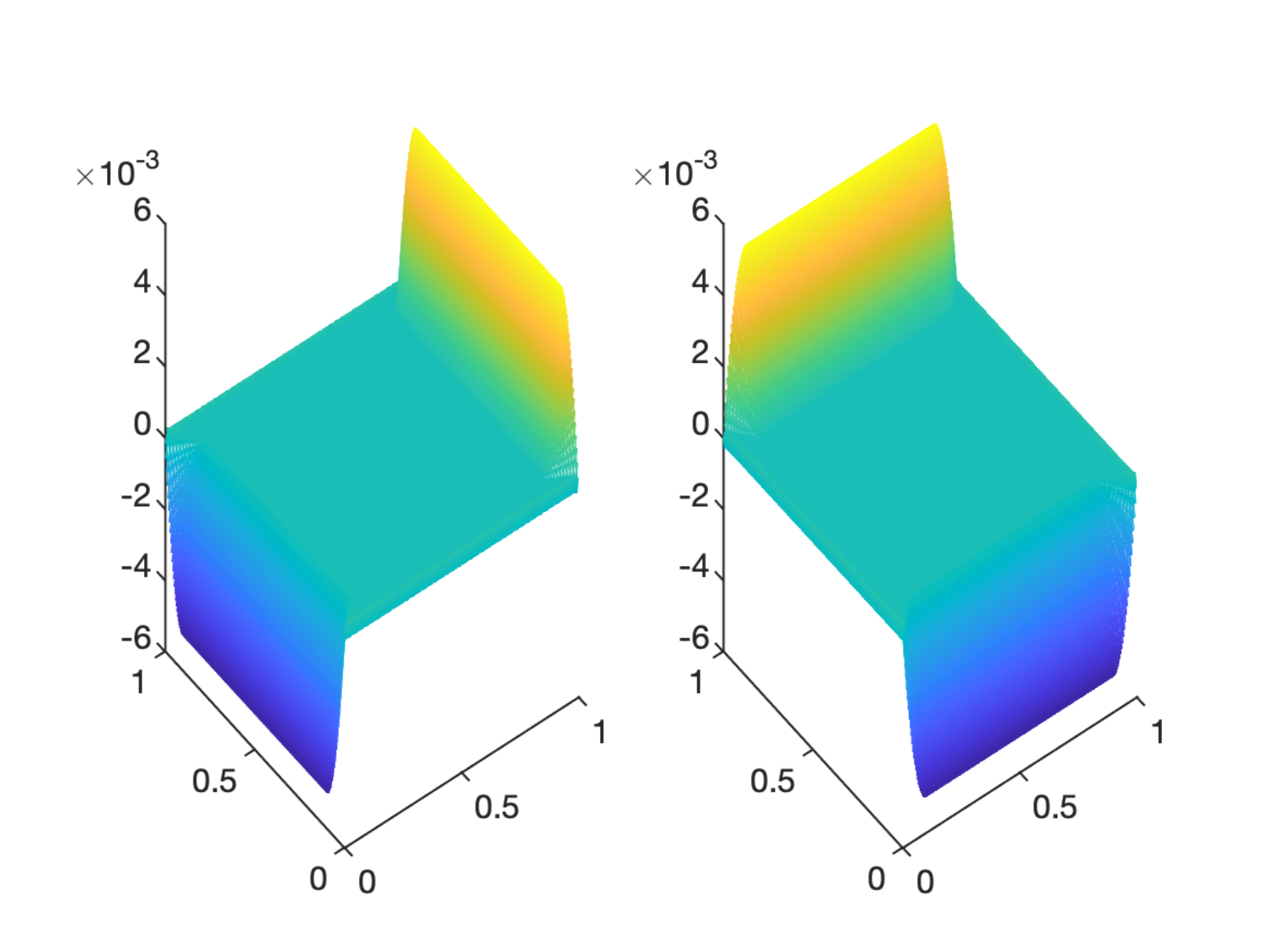}					
				\caption{{\bf Example~1 (fixed $\alpha$, varying $f$).} 
					The rows correspond to $u_h$, $|\nabla u_h|_{2}$, and $\bm p_h$. The columns 
					represent  $f = 1$, 0.25, and 0.1. In all cases, we observe that the gradient constraints are 
					active but the activity region shrinks as $f$ decreases, this is expected since the gradient on the
					plateau region is zero. The behavior of $\bm p$ also changes considerably with $f$.}
				\label{f:ex_2_1}
			\end{figure}

			In Figure~\ref{f:ex_2_2} we again show results from 3 
			different experiments. In all cases, we have used a fixed $f = 1$. The rows correspond to 
			$u_h, |\nabla u_h|_{2}$, and $\bm p_h$. Each column corresponds to 
			\[
			\alpha = 1, \alpha = 0.5, \mbox{ and } 
			\alpha = \left\{
			\begin{array}{ll}
			0.75 , & y \le 1-x, \\
			1.0,     & \mbox{otherwise} ,
			\end{array}	
			\right.
			\]
			respectively. In all cases, we observe that the gradient constraints are active in the entire domain 
			(except on a set of measure zero). For the case of piecewise constant $\alpha$, nonsmoothness
			in $\bm p_h$ is clearly visible.

			\begin{figure}[htb]
				\centering
				\includegraphics[width=0.32\textwidth]{figures/ex_2/u_mesh7_f_1_a_1_ex2}
				\includegraphics[width=0.32\textwidth]{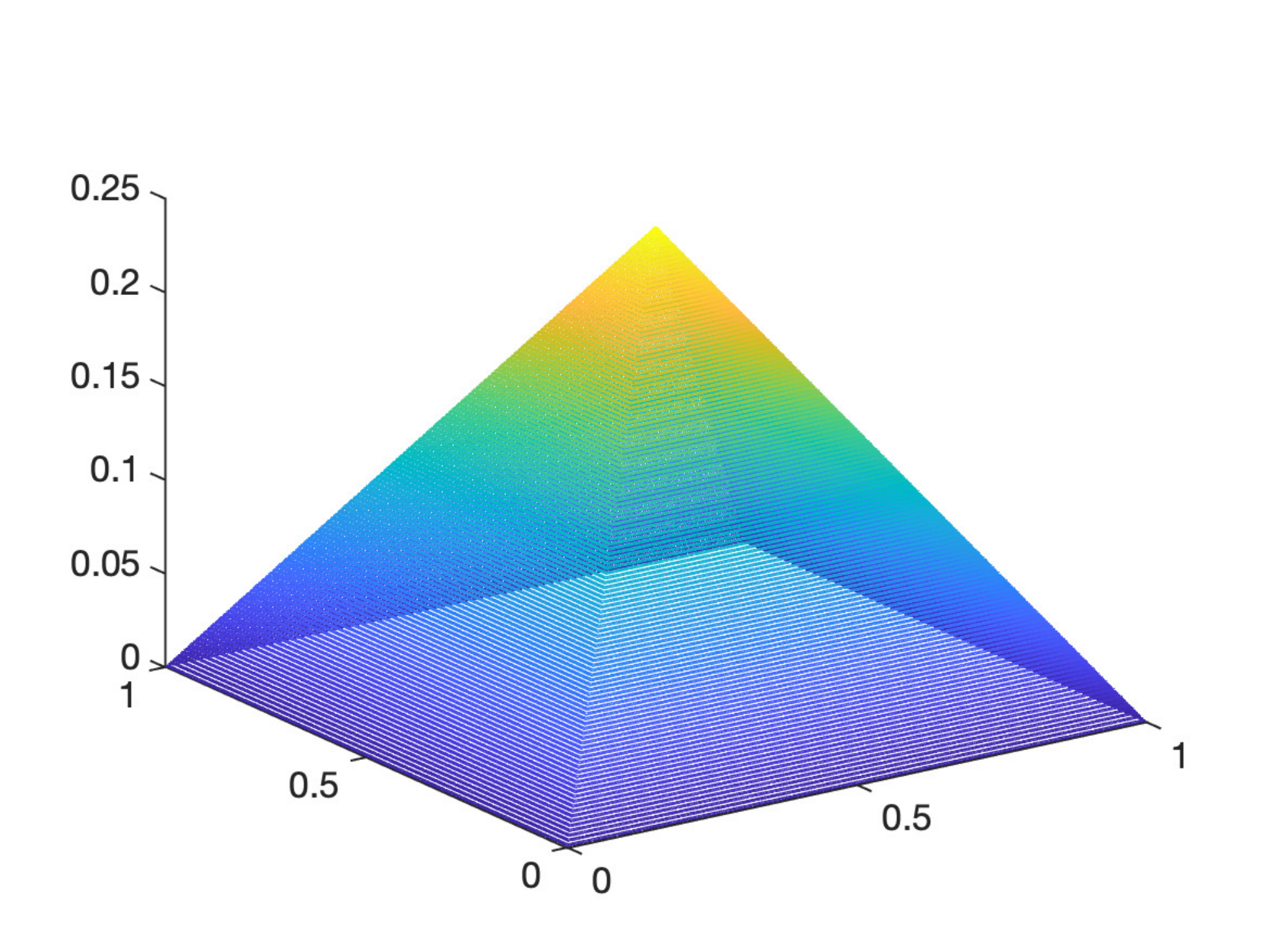}	
				\includegraphics[width=0.32\textwidth]{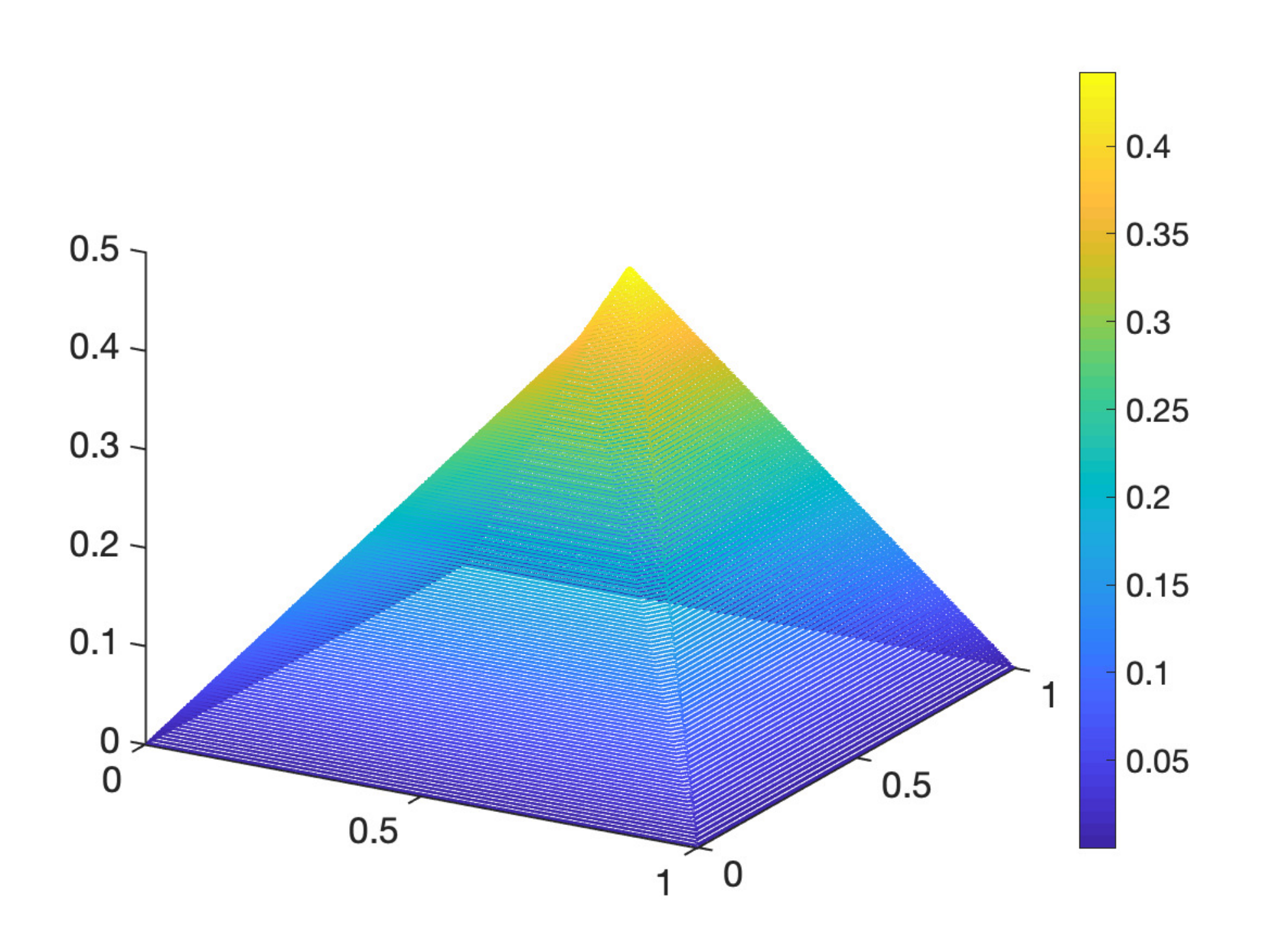}		
				\includegraphics[width=0.32\textwidth]{figures/ex_2/gu_mesh7_f_1_a_1_ex2}			
				\includegraphics[width=0.32\textwidth]{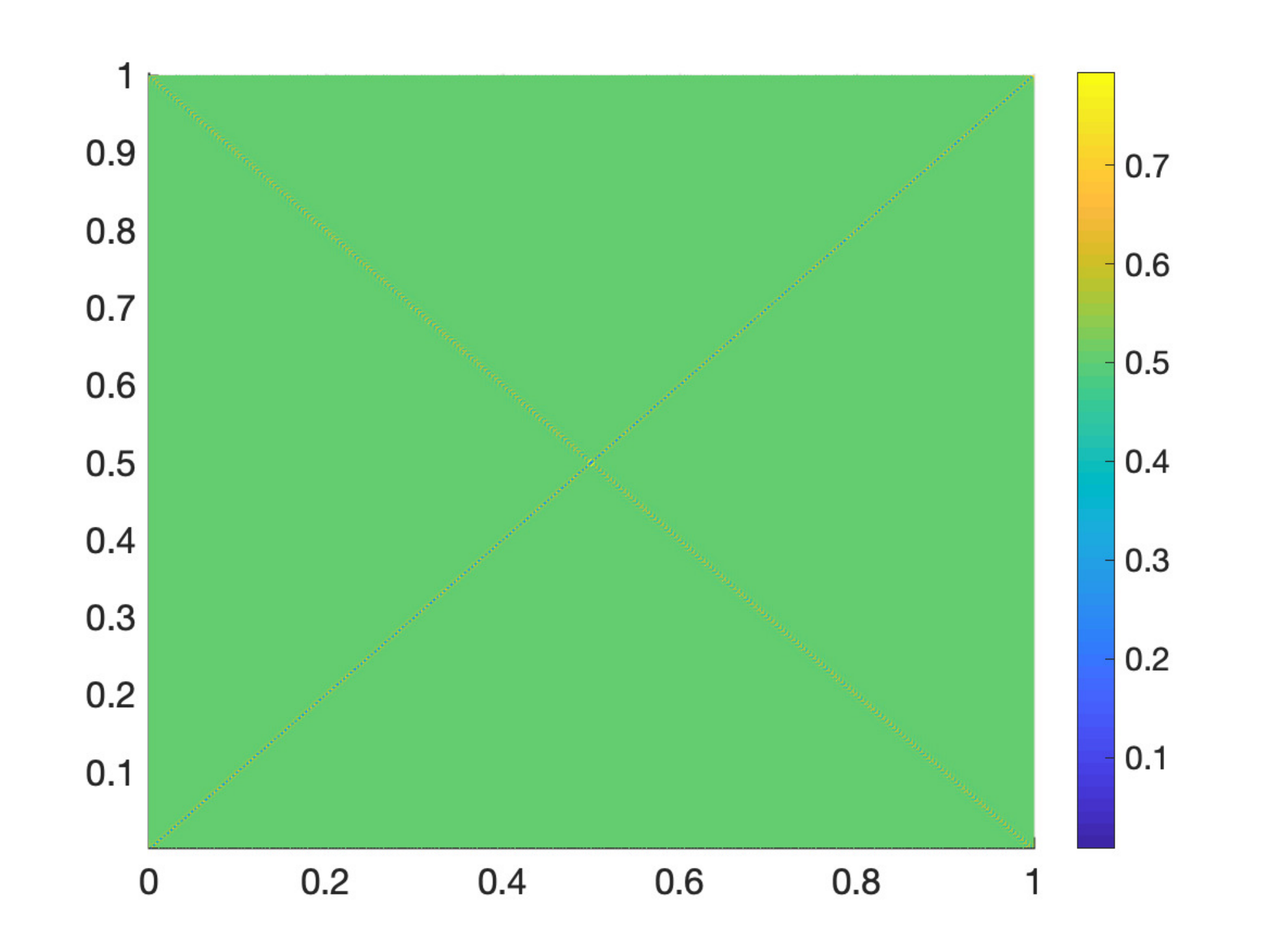}			
				\includegraphics[width=0.32\textwidth]{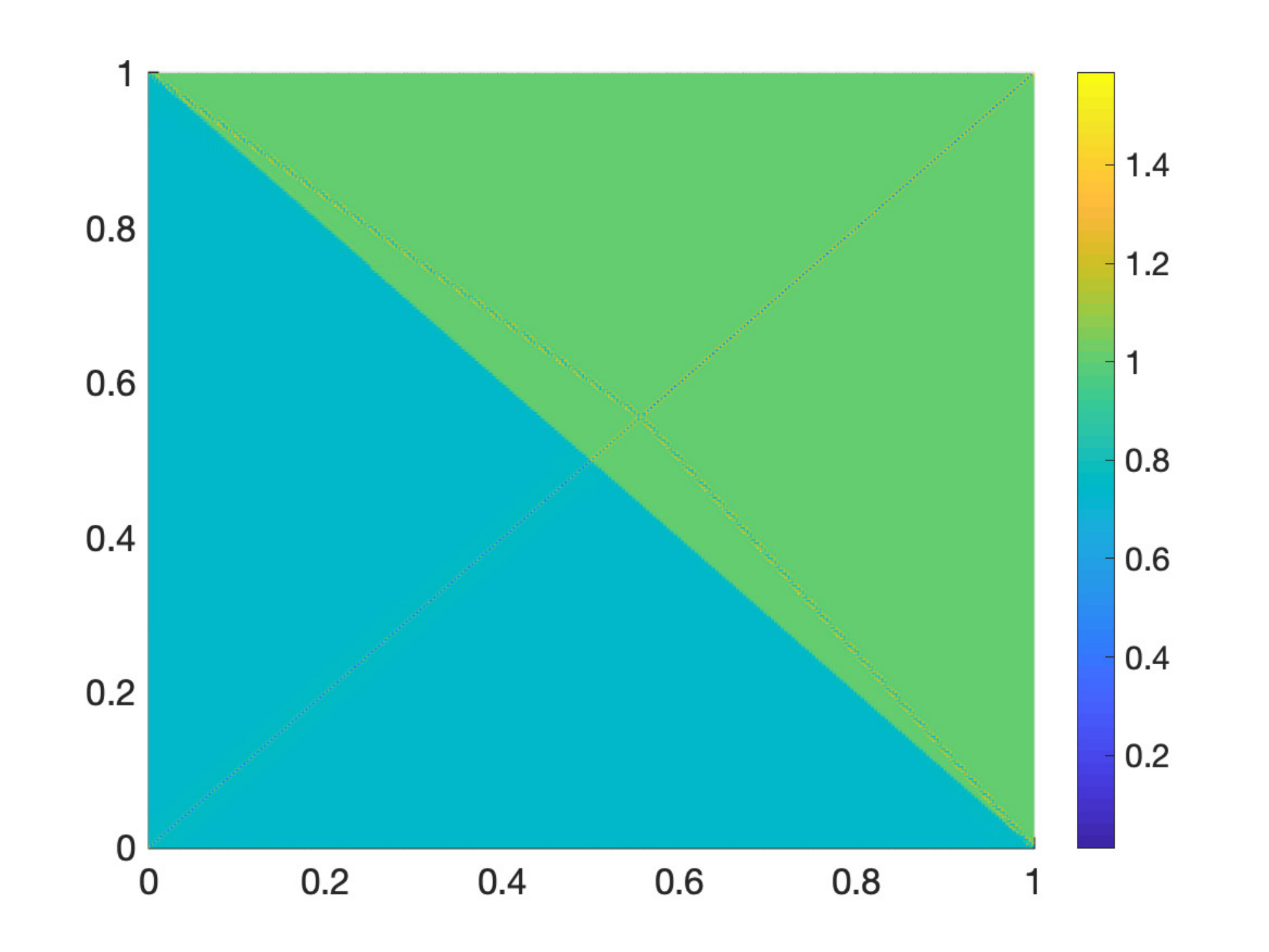}	
				\includegraphics[width=0.32\textwidth]{figures/ex_2/p_mesh7_f_1_a_1_ex2}
				\includegraphics[width=0.32\textwidth]{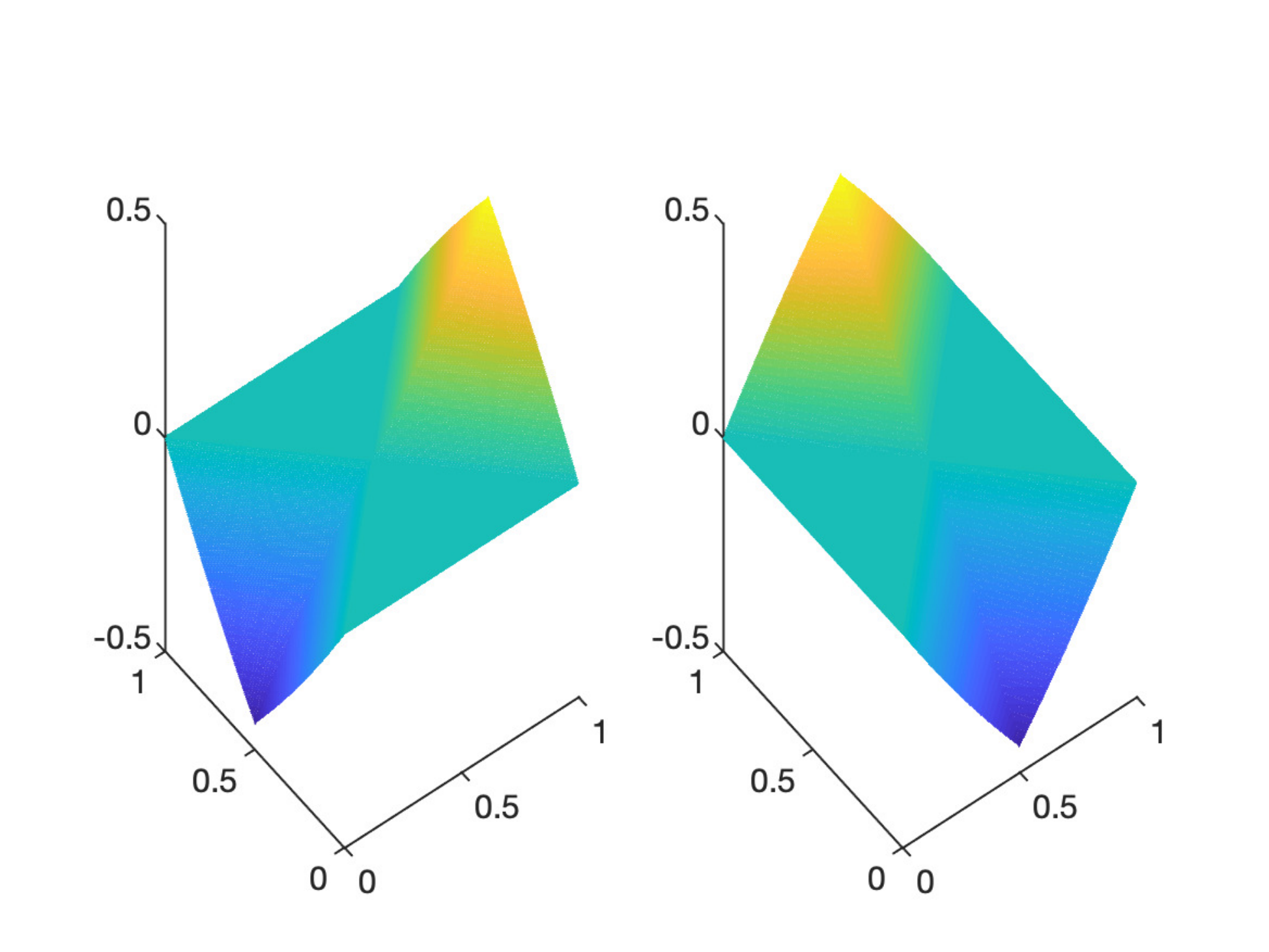}	
				\includegraphics[width=0.32\textwidth]{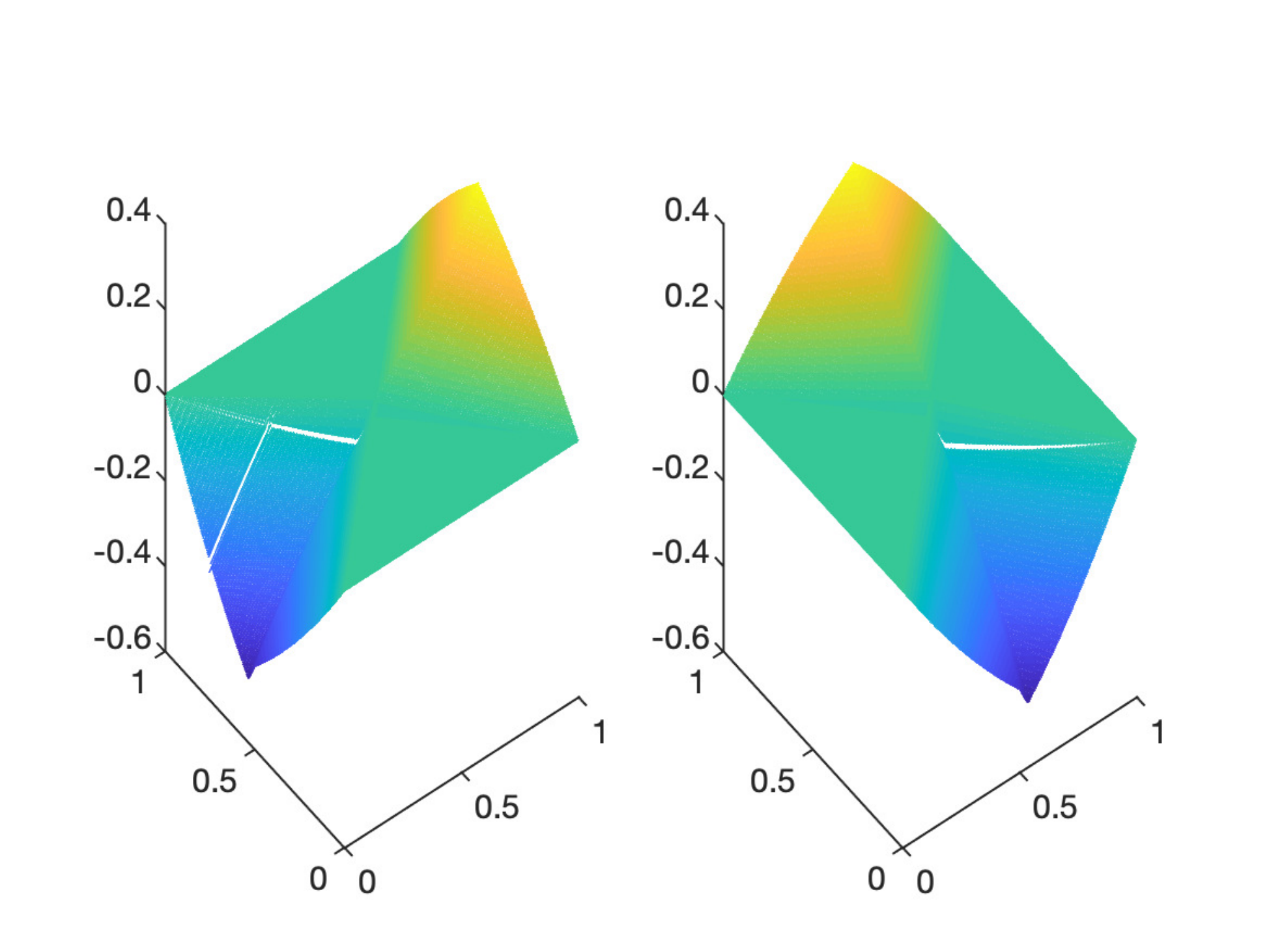}	
				\caption{{\bf Example~1 (fixed $f$, varying $\alpha$).} 
					The rows correspond to $u_h$, $|\nabla u_h|_{2}$, and $\bm p_h$. The first two columns 
					represent constant  $\alpha = 1$ and 0.5. The third column corresponds to $\alpha$ with jump
					discontinuity. In all cases, we observe that the gradient constraints are active in the entire region,
					except on a set of measure zero. Moreover, discontinuity in $\bm p$ in the last column is clearly 
					visible.}
				\label{f:ex_2_2}
			\end{figure}
			
		}
	\end{example}

	\newpage

	\begin{example}
		{\rm 			
			In this example, we set  
			\[
			f = 10^{-3} + u_0,
			\]
			where  
			\[
			u_0 := \left\{
			\begin{array}{ll}	
			\min\{0.2, 0.5 (x^2+y^2) \}, & y \le 1-x, \\
			\max\left\{ 1 - 5 \sqrt{ (x - 0.7)^2 + (y-0.7)^2 }, \min\left\{0.2, 0.5 (x^2+y^2) \right\} \right\}, & 1-x < y, \\
			0 & \mbox{otherwise} \, .	
			\end{array}
			\right. 
			\]
			Moreover, we set $\alpha = 2.5$. Figure~\ref{f:ex3_f} (left panel), shows a plot of $f$. 
			\begin{figure}
				\includegraphics[width=0.45\textwidth]{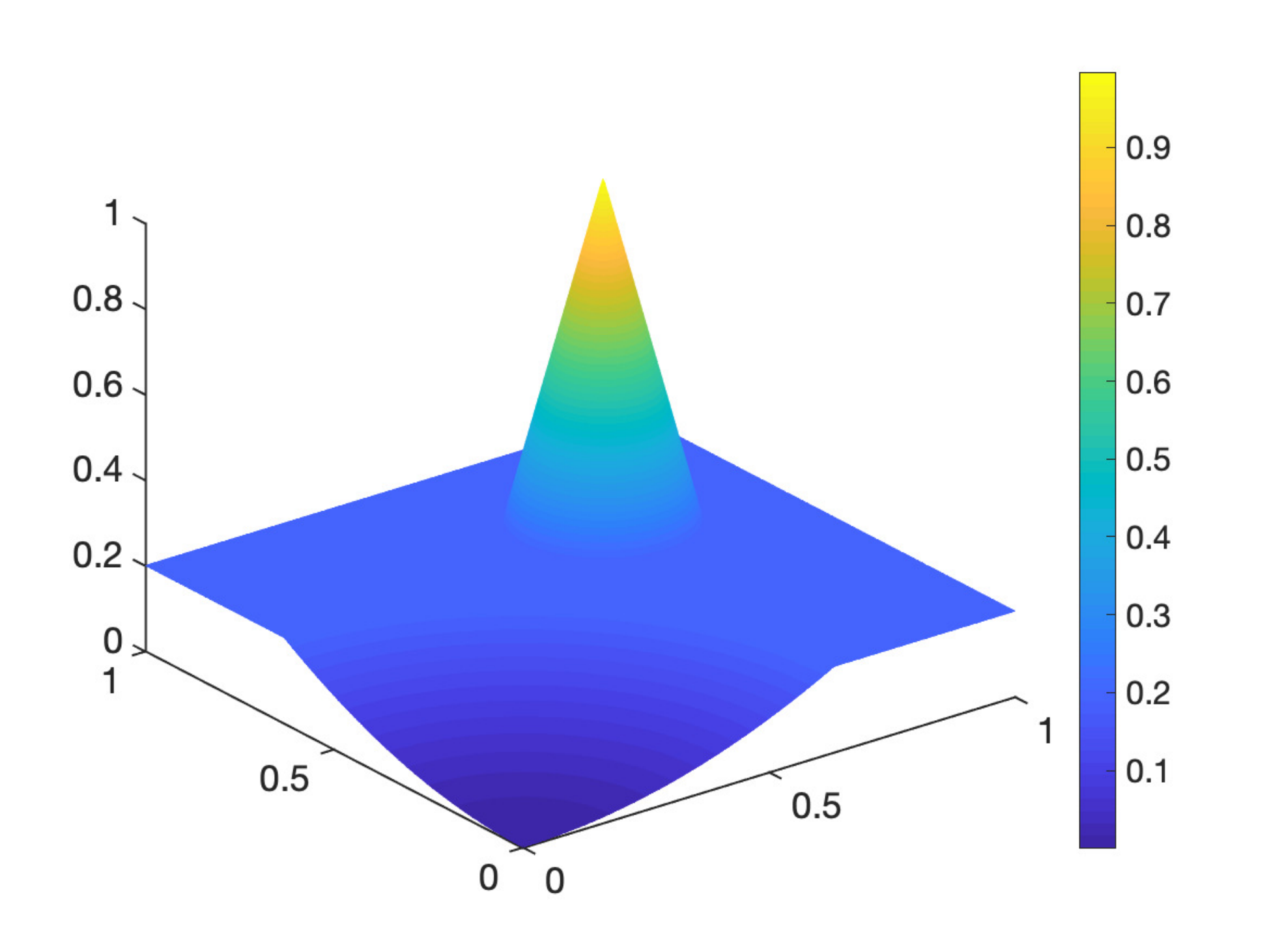}
				\includegraphics[width=0.45\textwidth]{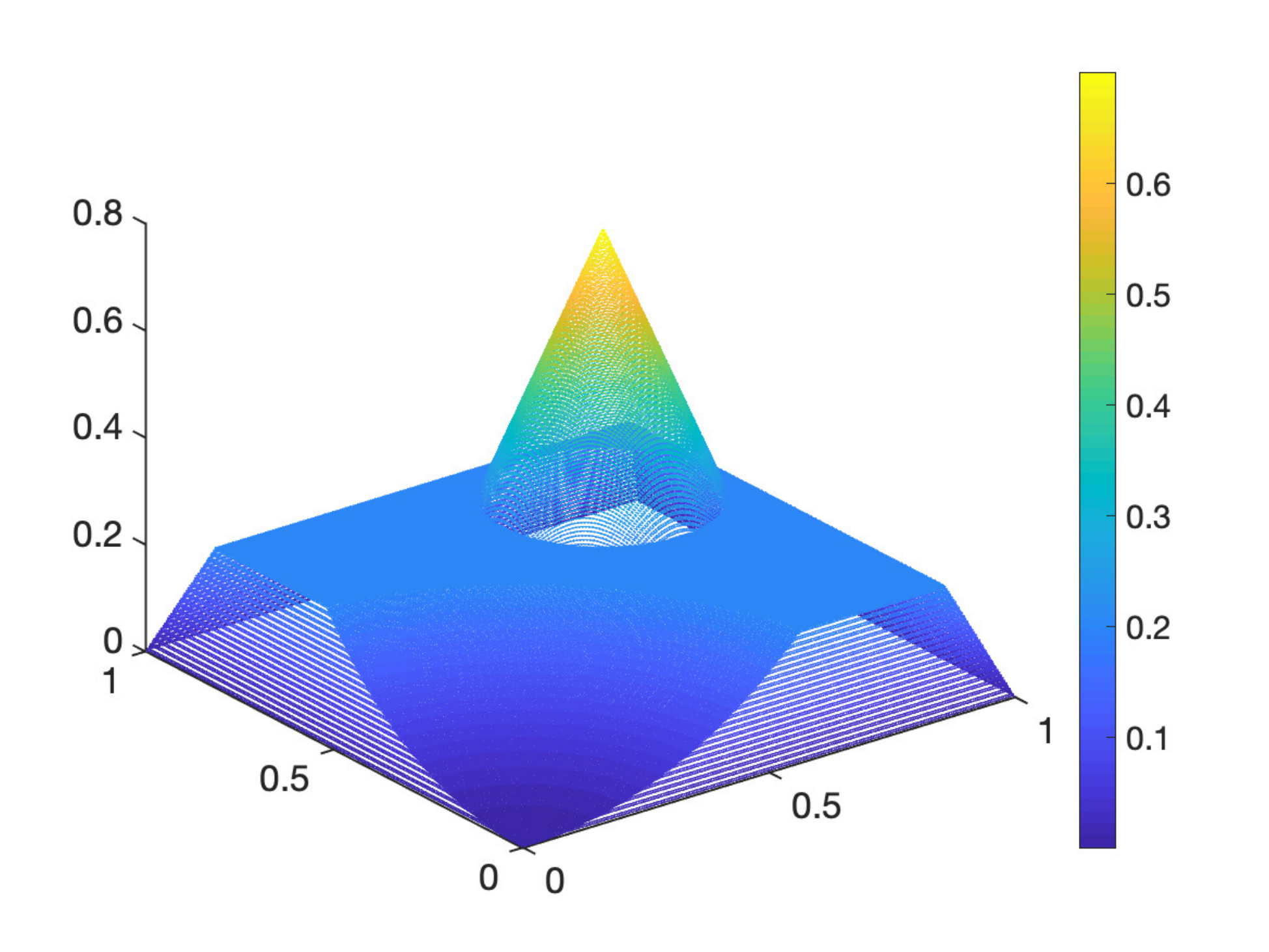}
				\caption{{\bf Example~2 ($\alpha = 2.5$).} Left panel: $f$. Right panel: the computed solution $u_h$.}
				\label{f:ex3_f}
			\end{figure}
			Figure~\ref{f:ex3_f} (right panel) shows the computed solution $u_h$. 
			In Figure~\ref{f:ex_3_1}, we have shown $|\nabla u_h|_{2}$ (left panel), and $\bm p_h$ (right
			panel). Notice that, the gradient constraints are active. Moreover, we also observe significant flat regions, 
			where the gradient is zero. 
			
			\begin{figure}[htb]
				\centering
				\includegraphics[width=0.45\textwidth]{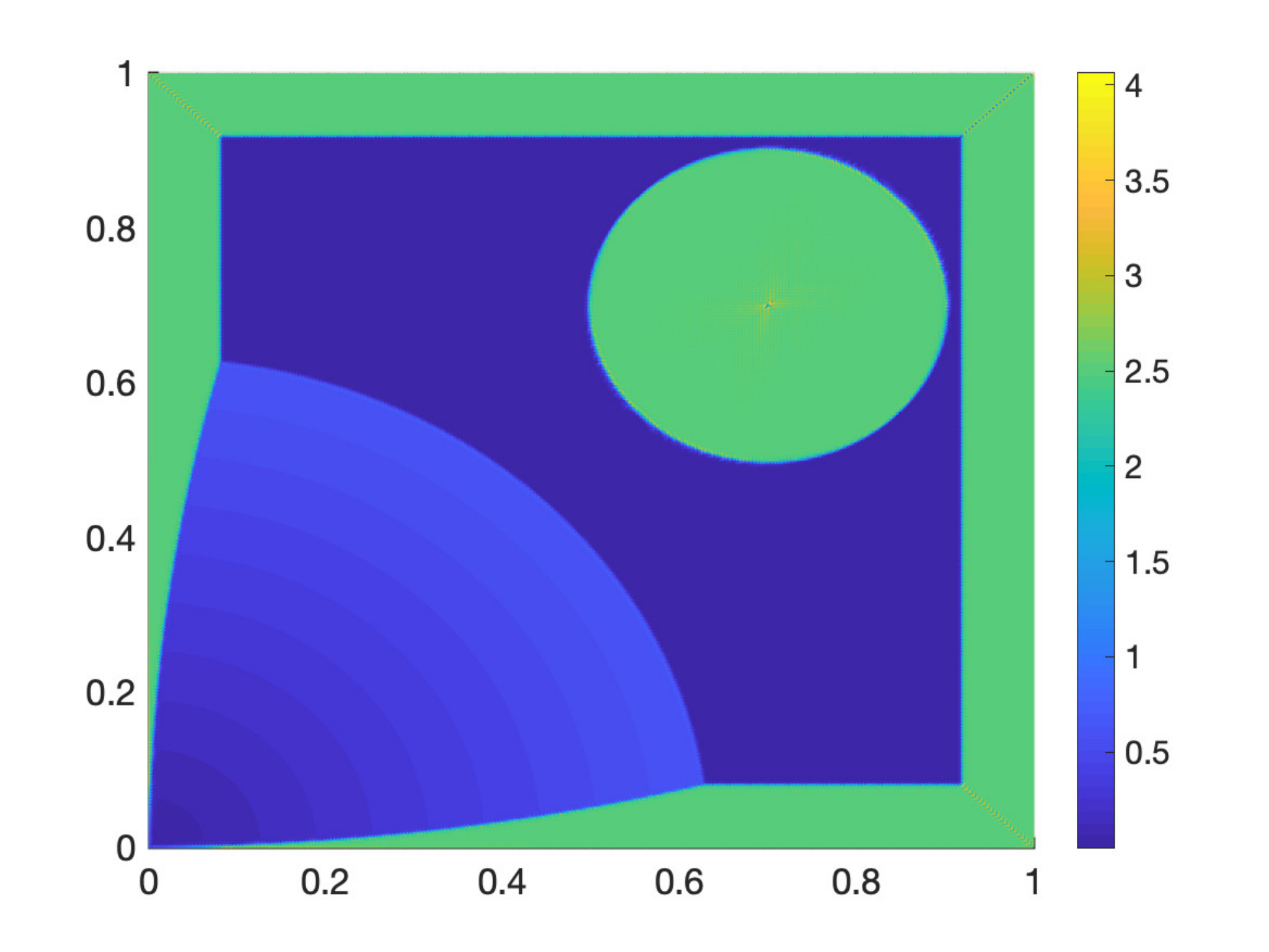}	
				\includegraphics[width=0.45\textwidth]{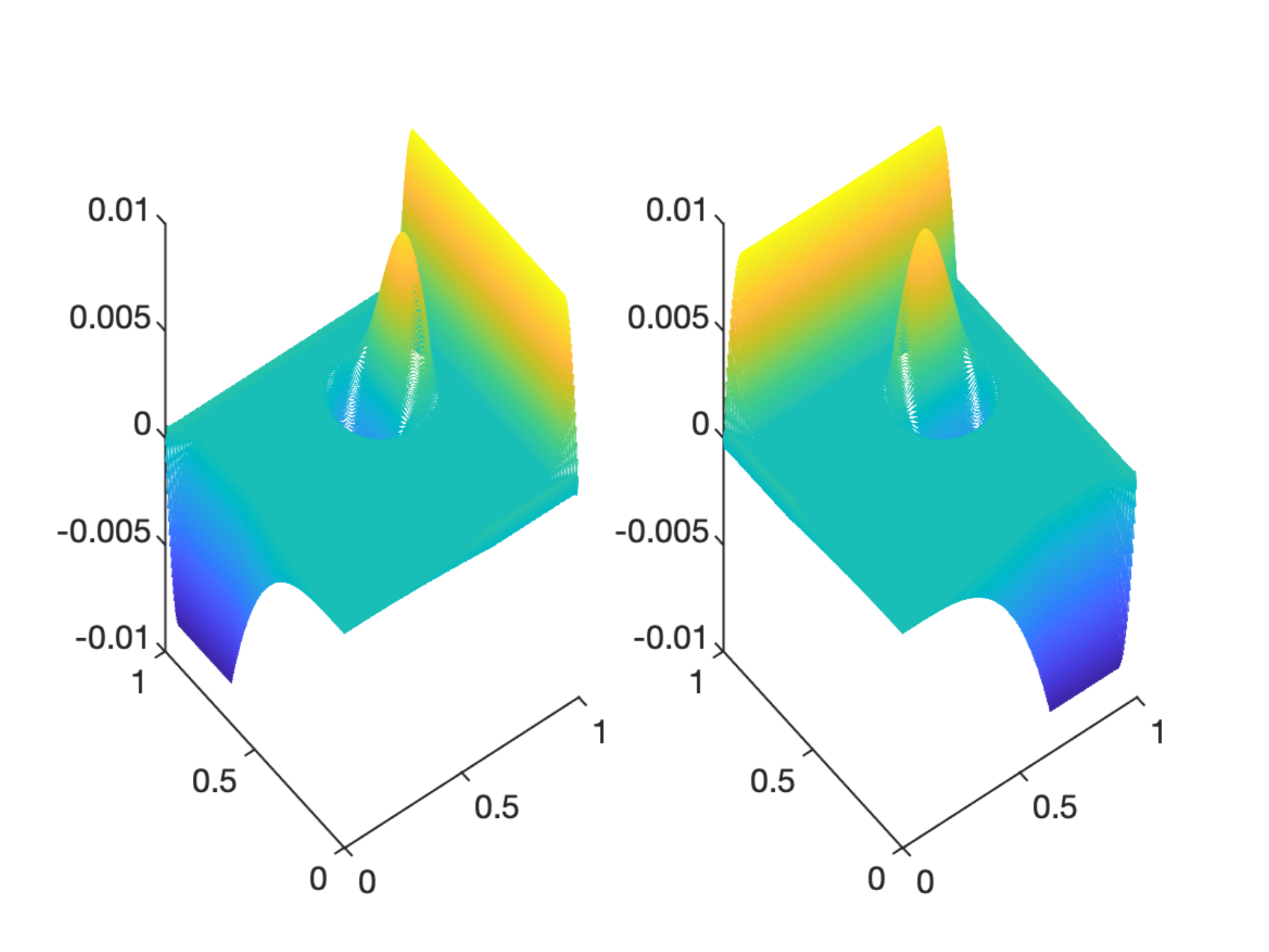}
				\caption{{\bf Example~3 ($\alpha = 2.5$).} Left panel: $|\nabla u_h|_{2}$. Right panel: the computed solution $\bm p_h$.}
				\label{f:ex_3_1}
			\end{figure}
			
			In Figure~\ref{f:ex_3_2} have also displayed $u_h$, $|\nabla u_h|_{2}$ and $\bm p_h$ 
			when $\alpha = 1.5$. 
			
			\begin{figure}[htb]
				\centering
				\includegraphics[width=0.45\textwidth]{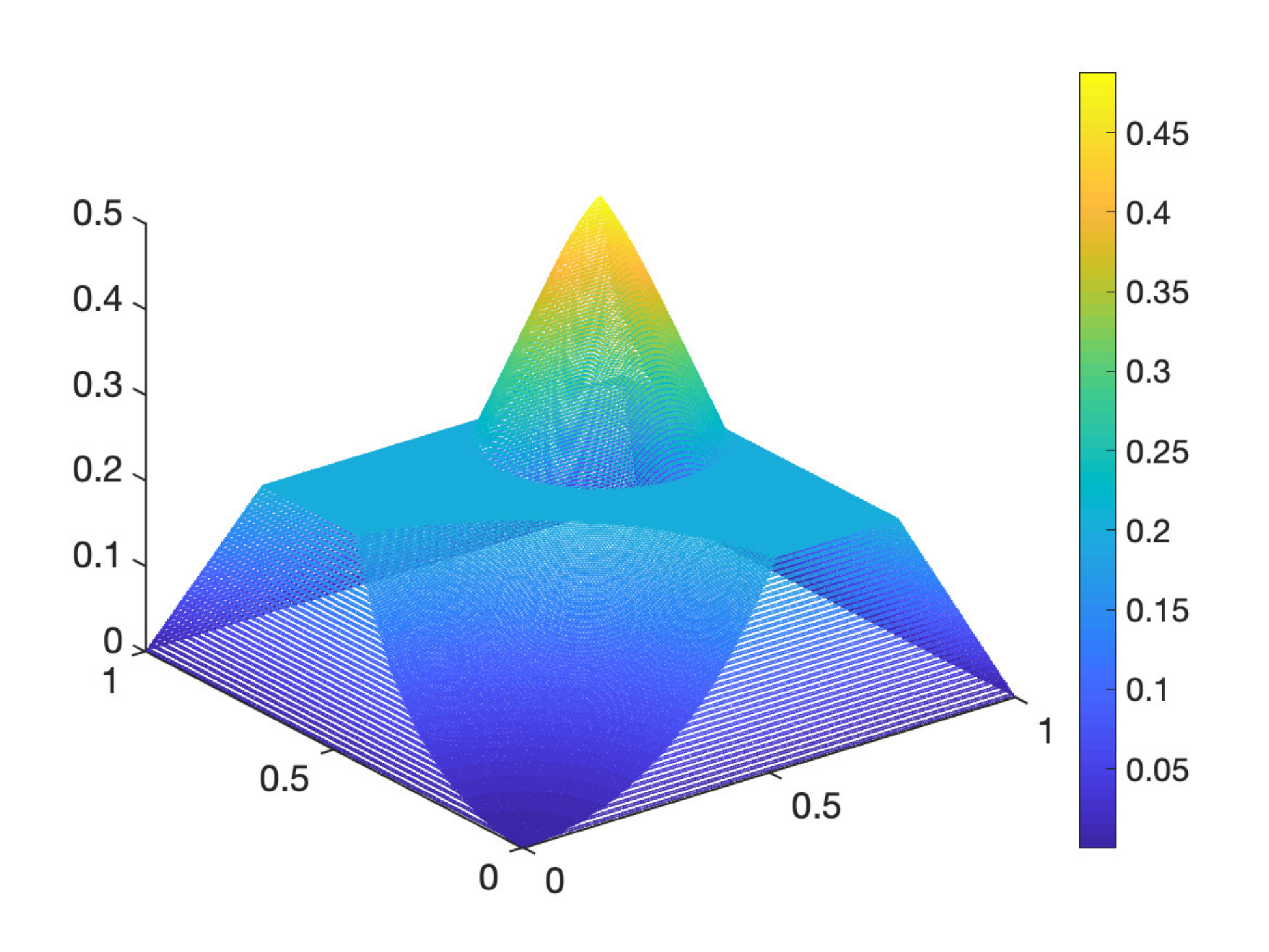}	
				\includegraphics[width=0.45\textwidth]{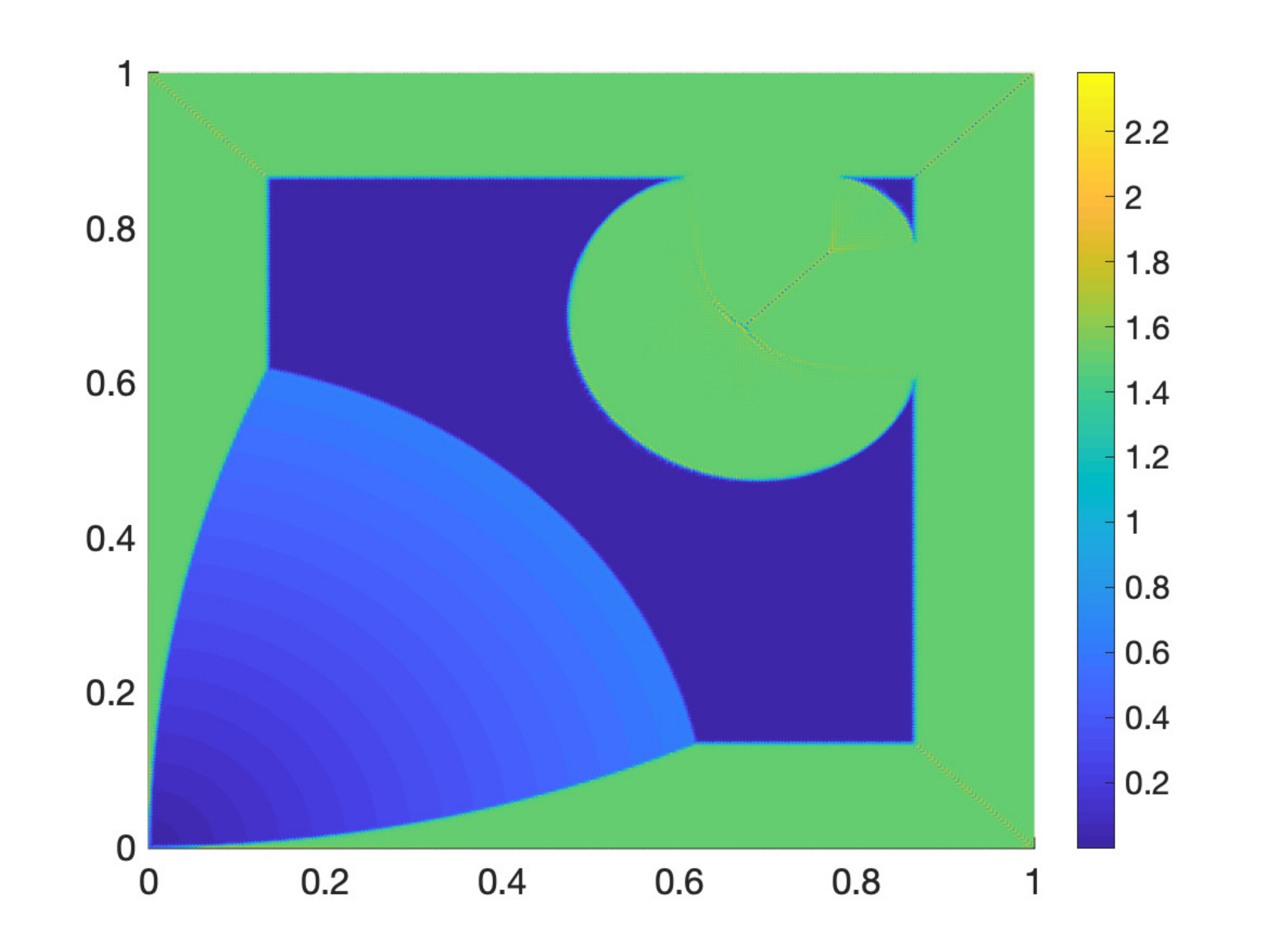}	
				\includegraphics[width=0.45\textwidth]{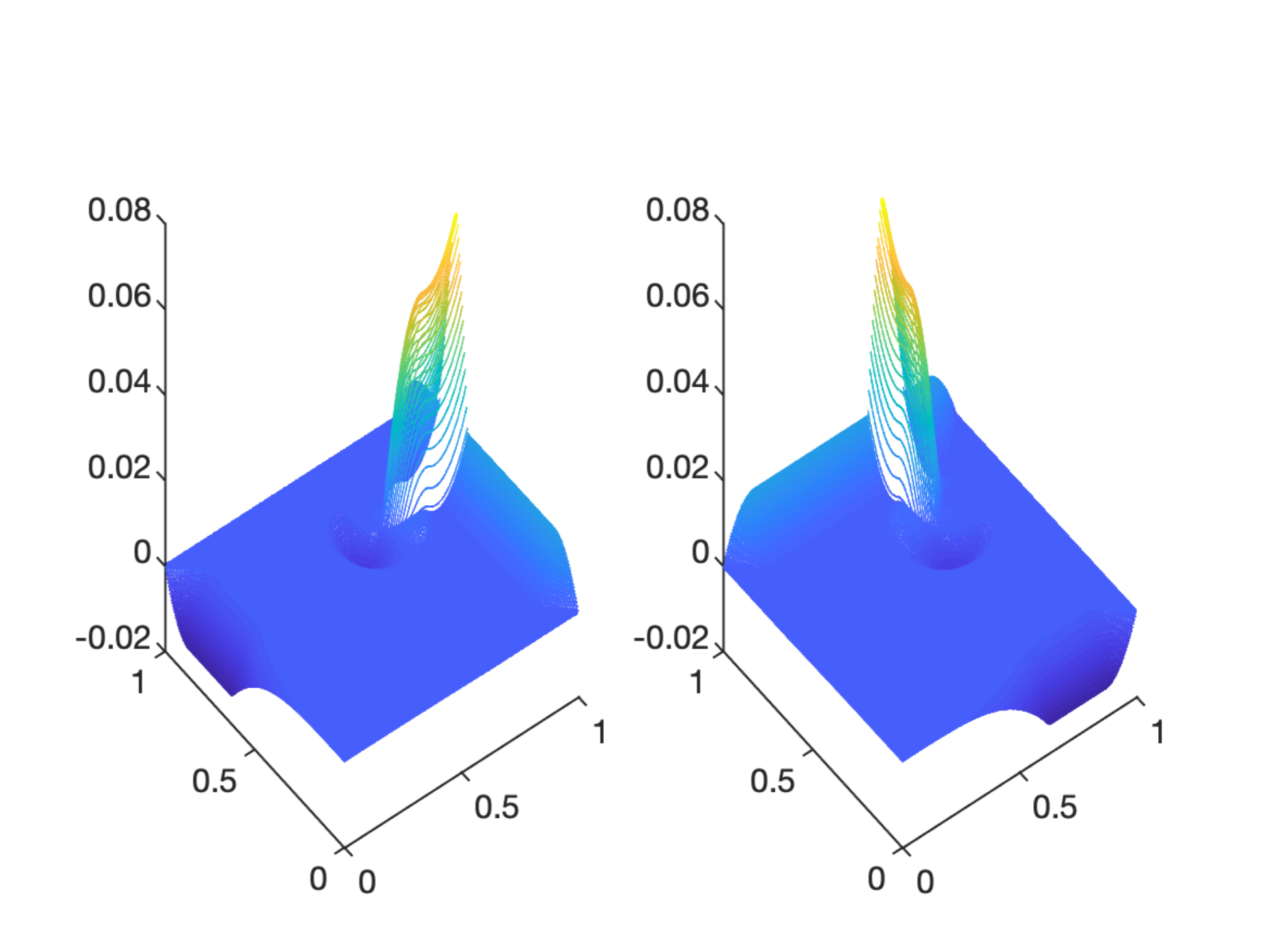}
				\caption{{\bf Example~3 ($\alpha = 1.5$).} Top row: $u_h$ and $|\nabla u_h|_{2}$. Bottom row: $\bm p_h$.}
				\label{f:ex_3_2}
			\end{figure}
			
		}
	\end{example}
	
	\begin{example}
		{\rm  
		In this example, we consider $f$ given by 
			\[
				f(x,y) = \left\{ \begin{array}{ll}
						0.25 &  (x,y) \in \Omega, \ 0.5 \le y, \\
						0      & \mbox{otherwise} .
					      \end{array}	 
					     \right. 
			\]
		The main novelty and challenge in this example is the fact that we let $\alpha$ to be a measure. Specifically
		\begin{equation*}
		\int_{\Omega} v \dif \alpha	=\int_{\Omega} v \dif x	+10^2 \int_{\omega} v \dif\mathcal{H}^{ 1}, 
		\end{equation*}
		for all $v\in C_c^\infty(\Omega)$ and
		where $\omega:=\{(x,y)\in\Omega:y=0.5\}$, i.e., $\alpha$ consists in the Lebesgue measure $\dif x$ and a weighted line measure on $\omega$.

		Let $h$ denotes the meshsize, then $\alpha^h$ is approximated as
		\begin{equation*}
			\dif\alpha^h=\left(1+\frac{\chi_{\omega^h}(x,y)}{h}\right)\dif x,
		\end{equation*} 
		where 
		\[
			\omega^h := \{ (x,y) \in \Omega \, : \, 0.5 - 10^{2} h \le y \le 0.5, \ x \in (0,1) \} .
		\] 		
				As $h \downarrow 0$, we approximate the measure in the sense that $\int_{\Omega} v \dif \alpha^h\to \int_{\Omega} v \dif \alpha$ for all $v\in C_c^\infty(\Omega)$.

		When $h = 8.4984 \times 10^{-5}$, the results are shown in Figure~\ref{f:ex4} (top row). Finally, when $h = 2.1412 \times 10^{-5}$ the results are provided in 
		Figure~\ref{f:ex4} (bottom row).  We notice that as $h \downarrow 0$, we indeed approximate the measure: In fact, we observe a clear discontinuity on the solution $u$, the size of the jump is below $100$ which is the upper bound on the distributional gradient on $\omega$. 
		
		\begin{figure}[htb]
			\includegraphics[width=0.45\textwidth]{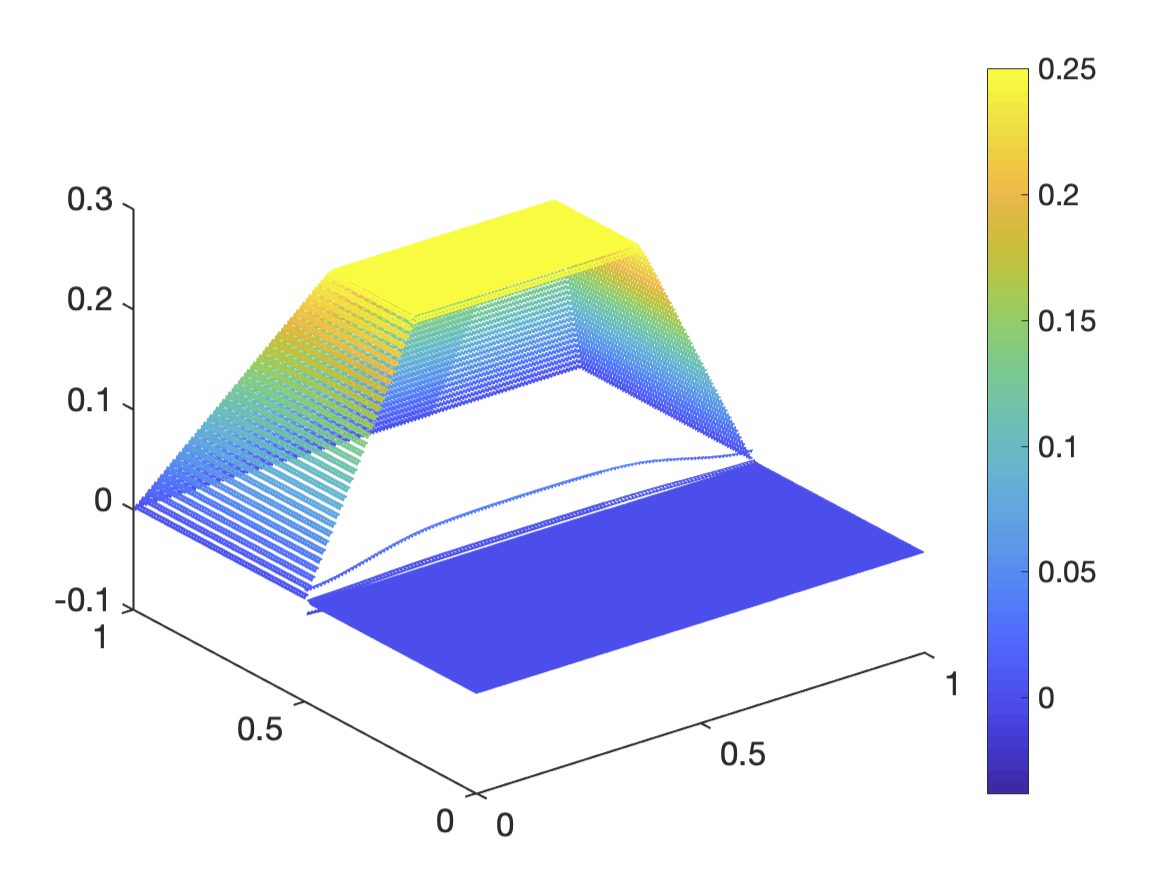}
			\includegraphics[width=0.45\textwidth]{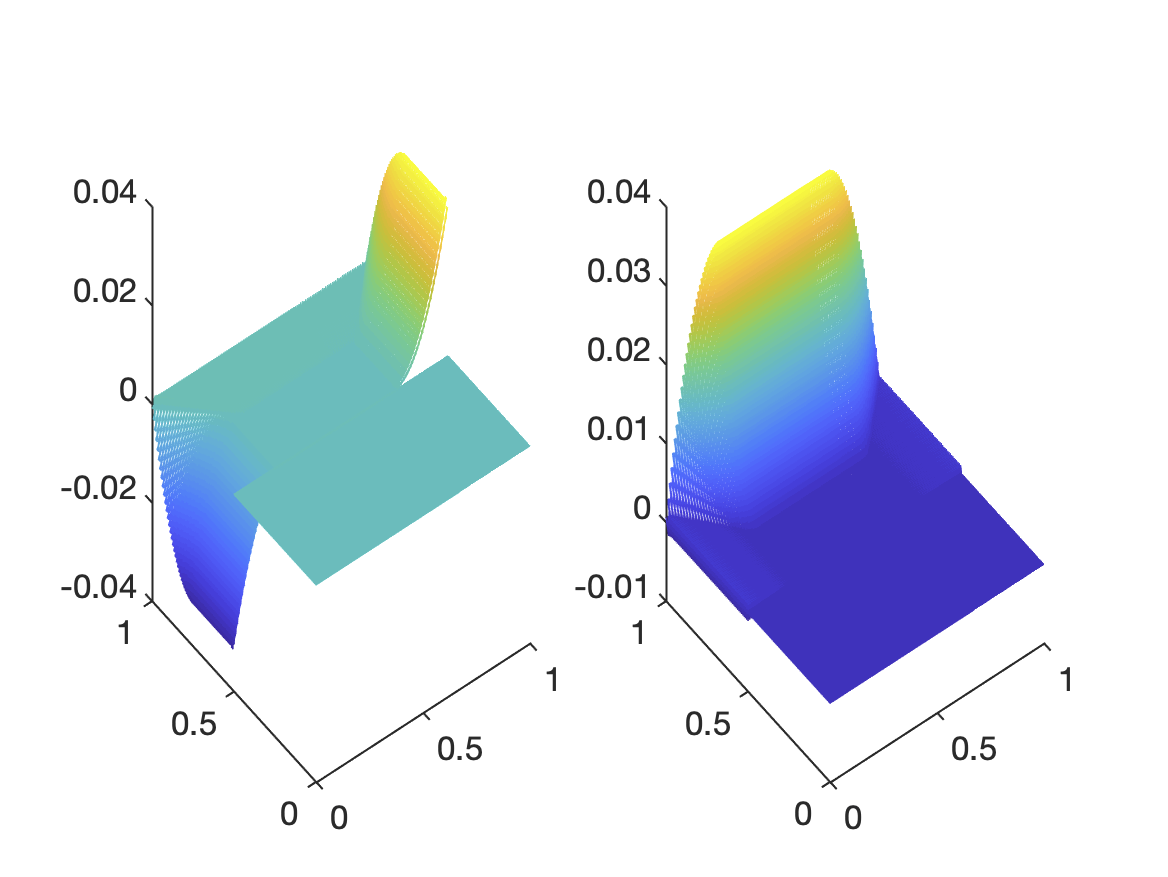}
			\includegraphics[width=0.45\textwidth]{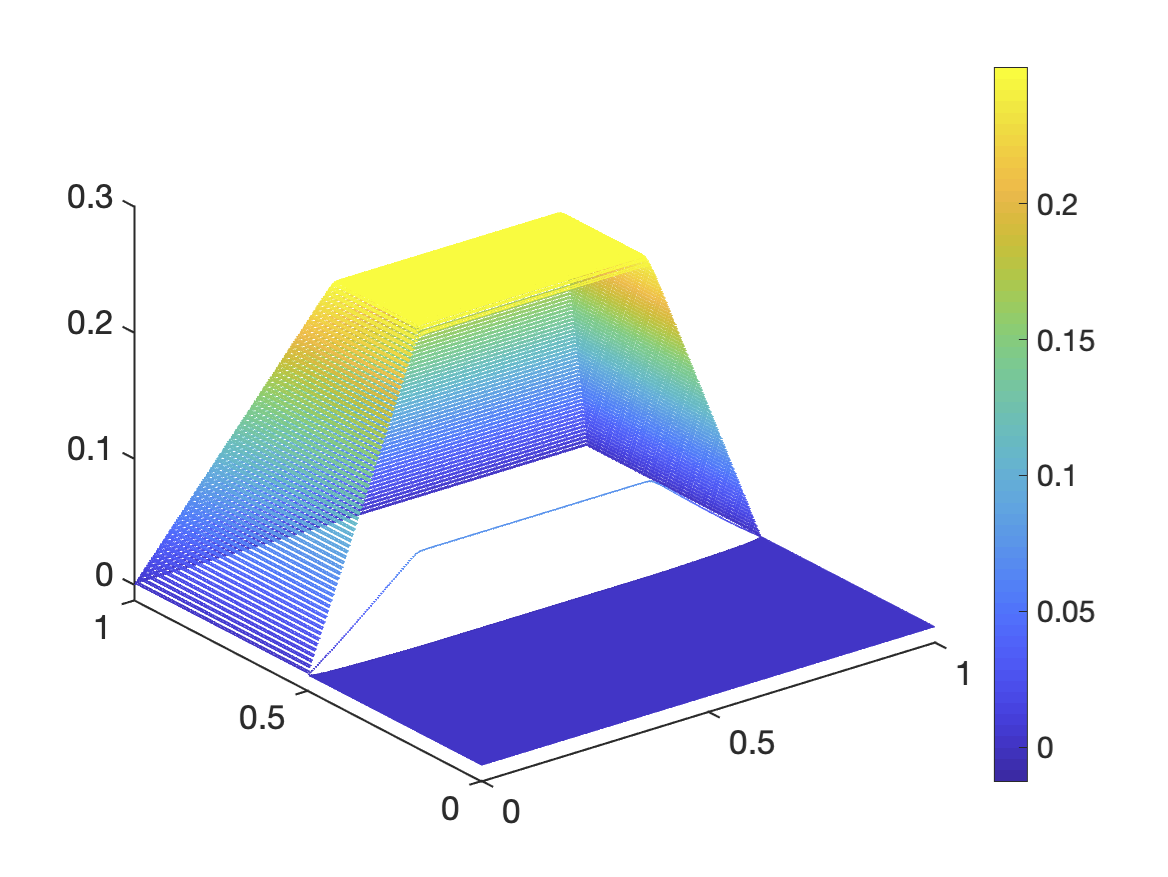}
			\includegraphics[width=0.45\textwidth]{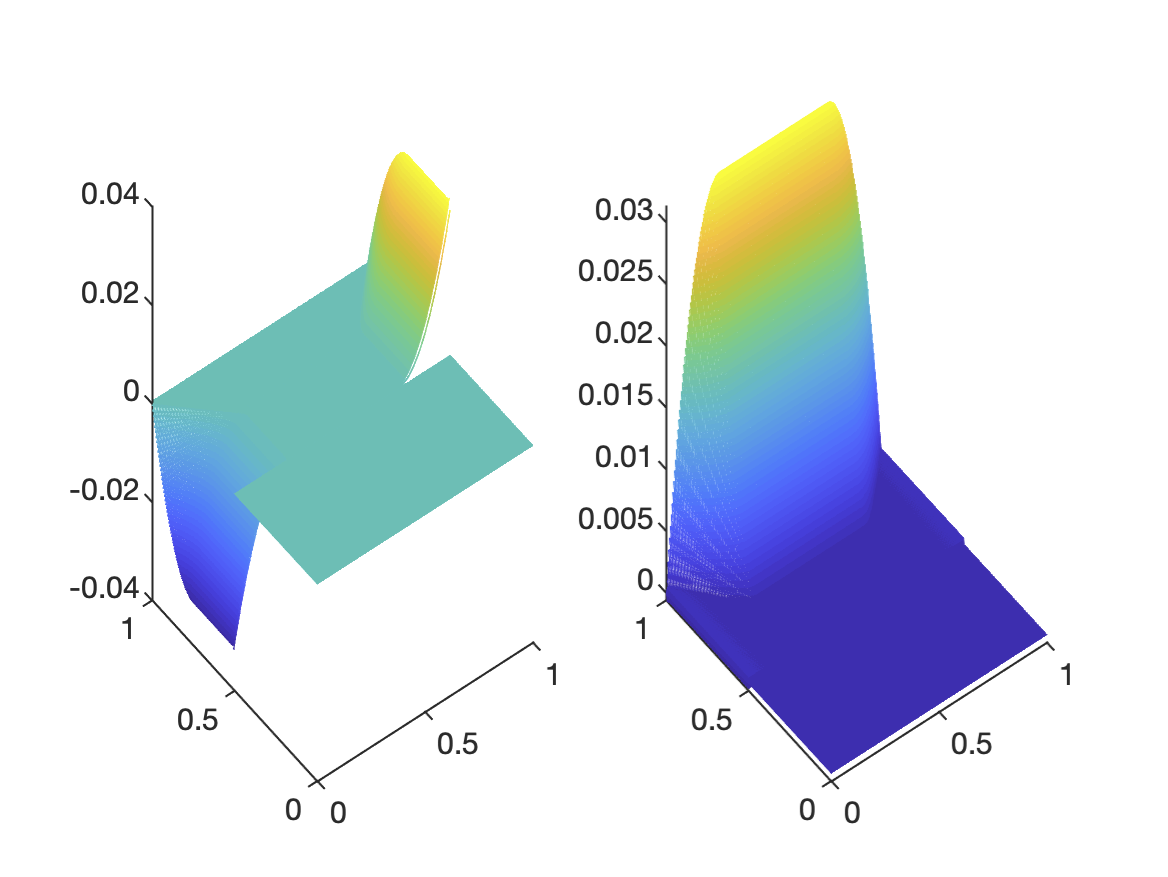}
			\caption{{\bf Example~4 ($\alpha$ measure).}  Top row: $u_h$ and $\bm p_h$ when the meshsize $h = 8.4984 \times 10^{-5}$.
			Bottom row: $u_h$ and $\bm p_h$ when the meshsize $h = 2.1412 \times 10^{-5}$. We notice that as $h \downarrow 0$,
			we accurately approximate the action of the measure  $\alpha$ as a distributional gradient constraint.}
			\label{f:ex4}
		\end{figure}
		
		}
	\end{example}
	
	\newpage

	\bibliographystyle{abbrv}
	\bibliography{references}

\end{document}